\documentclass{amsart}
\usepackage[utf8]{inputenc}
\usepackage[T1]{fontenc}
\usepackage{amsmath}
\usepackage{amsfonts}
\usepackage{amsthm}
\usepackage{amssymb}
\usepackage{enumerate}
\usepackage{geometry}
\usepackage[colorlinks=true, allcolors=blue]{hyperref}
\usepackage{bm}
\usepackage{mathrsfs}

\usepackage{cite}

\numberwithin{equation}{section}

\newtheorem{thm}{Theorem}[section]

\newtheorem{rmq}[thm]{Remark}
\newtheorem{lemm}[thm]{Lemma}

\newtheorem{prop}[thm]{Proposition}
\newtheorem{corol}[thm]{Corollary}

\newcommand{\F}{F}

\newcommand{\R}{\mathbb{R}}
\newcommand{\E}{\mathbb{E}}

\newcommand{\argmin}{\mathrm{argmin}\,}

\def\XX{{\mathbb X}}
\def\YY{{\mathbb Y}}

\def\PP{{\mathbb P}}
\def\ZZ{{\mathbb Z}}
\def\EE{{\mathbb E}} 
\def\M{\mathcal{M}}

\begin{document}

\title{{\it {k-}}means with learned metrics}
\author{Pablo Groisman}\address{Departamento de Matemática, Facultad de Ciencas Exactas y Naturales, Universidad de Buenos Aires, IMAS-CONICET and NYU-ECNU Institute of Mathematical Sciences at NYU Shanghai}
\email{pgroisma@dm.uba.ar}
\author{Matthieu Jonckheere}\address{LAAS, CNRS, 7 Avenue Colonel Roche, 31400 Toulouse, France}
\email{matthieu.jonckheere@laas.fr}
\author{Jordan Serres}\address{LPSM, Sorbonne Université, 4 place Jussieu 75005 Paris, France}
\email{jordan.serres@sorbonne-universite.fr}
\author{Mariela Sued}\address{Departamento de Matemática y Ciencias, Universidad de San Andrés, CONICET}\email{marielasued@gmail.com}

\begin{abstract}
We study the Fr\'echet {\it k-}means of a metric measure space when both the measure and the distance are unknown and have to be estimated. We prove a general result that states that the {\it k-}means are continuous with respect to the measured Gromov-Hausdorff topology. In this situation, we also prove a stability result for the Voronoi clusters they determine. We do not assume uniqueness of the set of {\it k-}means, but when it is unique, the results are stronger.
{This framework provides a unified approach to proving consistency for a wide range of metric learning procedures.
As concrete applications, we obtain new consistency results for several important estimators that were previously unestablished, even when $k=1$. These include {\it k-}means based on: (i)  Isomap and Fermat geodesic distances on manifolds, (ii) difussion distances, (iii) Wasserstein distances computed with respect to learned ground metrics. Finally, we consider applications beyond the statistical inference paradigm like (iv) first passage percolation and (v) discrete approximations of length spaces.}

\end{abstract}

\maketitle

\section{Introduction}
In many learning tasks such as clustering or dimensionality reduction, 
it is often observed that the data, while embedded in a high-dimensional ambient space, 
actually concentrate near a low-dimensional manifold $\M$ of much smaller dimension \cite{singer2006graph, Isomap, groisman2022nonhomogeneous, hein2005graphs, xu2025manifold, 
Bhattacharya, Bhattacharya2, Fefferman, JaffeKmeans}. However, in practice, the 
geometric structure of this underlying manifold is not known in advance and has to be inferred from the data. 
Learning the manifold or the underlying density can be formidable challenges in high dimensions, but estimating meaningful distances that capture both geometry and density can be more tractable and sufficient for tasks like clustering.
This observation has motivated extensive research on distance learning methods. 
Diffusion Maps \cite{coifman2006diffusion} employ a diffusion 
process on the manifold and graph Laplacians to estimate diffusion distances, which capture 
both geometric and density information through random walks on a similarity graph. Hessian 
Eigenmaps \cite{DonohoGrimes} provide a method for recovering the underlying parametrization 
of scattered data lying on a manifold $\M$ embedded in high-dimensional Euclidean space.
 Isomap \cite{Isomap}, designed initially for dimensionality reduction, estimates geodesic 
distances on the manifold. Fermat distances \cite{groisman2022nonhomogeneous} have a similar 
spirit but allow for metrics that depend not only on the geometry of the manifold but also 
on the underlying density. 
Another important instance where distances must be estimated arises in the Wasserstein space. 
In this setting, we typically do not have access to the true Wasserstein distances between 
probability measures but only to empirical estimates obtained from samples of the involved 
measures.

Despite the widespread use of these learned metrics 
in clustering and dimensionality reduction algorithms, the consistency of estimators constructed with them as an input has received limited attention.

The $k$-means clustering method provides a criterion for partitioning a cloud of points $\{X_1, \dots, X_n \}$ into $k$ groups:  first choose cluster centers that minimize the sum of the squared distance of each point to its closest center and then assign points to each nearest cluster center.
It is very fruitful to regard this procedure as a plug-in estimator of its corresponding population parameters (centers and clusters).
From this perspective, Pollard, in his seminal paper \cite{Pollard}, established the consistency of the empirical {\it k-}means for distributions on $\mathbb R^d$. Later, \cite{lember2003minimizing} extended the consistency for Borel measures defined on separable metric spaces. The case $k=1$ was considered previously by Ziezold in \cite{Ziezold}. Recently, \cite{EvansJaffe2024} introduced \textit{relaxed} notions of {\it k-}means and proved consistency for their empirical counterpart for distributions defined on metric spaces that have the Heine-Borel property.
In \cite{Cuevas} the authors prove the consistency of the empirical {\it k-}means, adapted to the setting of non-uniqueness, for probability measures defined on separable Banach spaces. They also provide necessary and sufficient probabilistic conditions for the uniqueness of the {\it k-}means set of a probability distribution and determine the asymptotic distribution of the within cluster sum of squares. As a consequence, they derive a test for uniqueness of the set of {\it k-}means.

In all these references it is assumed that the metric is known in advance.

We address here the problem of stability of $k$-means under perturbations of the entire metric measure space.
This general setting arises naturally in modern data-driven applications where the distance function is not fixed but rather evolves as a sequence that can be data-dependent (estimated through distance learning procedures) but also in randomly generated models (like first passage percolation) and discretization or approximation schemes in length spaces that can be deterministic or random.

We establish a general convergence result for both the centers and clusters obtained via the
$k$-means procedure under such varying metrics. 

Our framework considers a sequence of metric measure spaces 
$\mathcal X_n=(\XX_n,d_n,\mu_n)$ 
and proves convergence of centers and clusters
in a suitable sense, provided that the spaces $\mathcal X_n$ 
converge to a limiting space in the measured Gromov-Hausdorff (mGH) sense, see Theorem \ref{thm:convergence} and Corollary \ref{cor:consistenceclusteringundermGH}.

Note that our results hold for every $k \geq 1$ and, up to our knowledge, they are new even 
for $k=1$. We then consider many applications of our main theorem to obtain the consistency of the empirical $k$-means and their clusters for different procedures that compute meaningful distances between the data points. This is the case of the Isomap geodesic distance estimator, Fermat distances, difussion distances arising from Laplacian eigenmaps, diffusion maps and Wasserstein barycenters computed with respect to learned ground metrics. We are not aware of any consistency results for any of them, even for $k=1$. We also obtain new results concerning barycenters and $k$-barycenters in first passage percolation and for discrete approximations of length spaces.

Although our results allow for non-uniqueness of the $k$-means, the uniqueness problem is 
important for at least two reasons. On the one hand, when the set of $k$-means is unique, our 
convergence result is stronger (see equation~\eqref{conv.uniqueness}).
On the other hand, as discussed in \cite{Cuevas}, uniqueness, which is related to stability, can be used as a proxy for a good choice of $k$. In view of this, in Subsection \ref{NPC} we discuss a criterion for the uniqueness of the 1-mean under the Fermat distances. In the rest of the manuscript, we use {\it k-}means, Fréchet {\it k-}means, {\it k-}barycenters and {\it k-}medoids interchangeably. 

The remainder of the paper is organized as follows. Section~\ref{sec:mainresults} introduces 
the necessary background on measured Gromov-Hausdorff convergence and presents our main consistency 
theorems for both centers and clusters. 
Section~\ref{sec:applications} contains detailed applications to the various metric learning 
scenarios mentioned above and develops the uniqueness criterion for Fermat distances.

\section{Main results and proofs}\label{sec:mainresults}

The main result of this article is the consistency of the {\it k-}means estimator (centers and clusters) even if the notion of distance between data points is learned from the data itself rather than a given one. We remark that our statements, as well as their proofs are not data based but general abstract results on the stability of {\it k-}means under perturbations of the whole metric measure space. In section \ref{sec:applications} we apply these results to specific tasks that involve learning from data.

We first introduce the required notions of convergence and main properties of the functionals involved in the definition of the $k$-means. Then we prove the convergence of the centers and the clusters under suitable mGH convergence.

\subsection{Continuity of {\it k-}means with respect to the mGH topology.} \label{sec:continuitykmean}
We start by reviewing the measured Gromov-Hausdorff convergence (mGH). In order to do that, we first recall the notion of $\varepsilon$-isometry.
A map $h\colon (\XX,d_\XX) \to (\YY,d_\YY)$ between metric spaces is called an $\varepsilon$-isometry if
\begin{itemize}
 \item For all $x,x' \in \XX$ we have $|d_\XX(x,x') - d_\YY(h(x),h(x'))| < \varepsilon$ and
 \item For all $y\in \YY$ we have $d(y,h(\XX))< \varepsilon$.
\end{itemize}

A sequence of compact normalized metric measure spaces $\mathcal X_n=(\XX_n , d_n , \mu_n )$ converges in measured Gromov–Hausdorff sense (mGH) to a compact normalized metric measure space $\mathcal X=(\XX , d , \mu )$ if and only if there exist numbers $\varepsilon_n \to 0$ and $\varepsilon_n$-isometries $h_n\colon \XX_n\to\XX$ such that $(h_n)_{\#}(\mu_n) \to \mu$ weakly on $\XX$ when $n\to\infty$ (\!\!\cite{Fukaya,sturm2006geometry}). We denote this convergence by $\mathcal X_n \overset{mGH}{\to} \mathcal X$.

Note that, the existence of the $\varepsilon_n$-isometries $h_n: \XX_n\to \XX$ is equivalent to the fact that $\{(\XX_n,d_n)\}$ converges to $(\XX,d)$ in the Gromov-Hausdorff distance, see \cite[Chapter 7]{Burago}.\\

Given a compact metric space $(\mathbb Y, d_{\mathbb Y})$, let $\mathcal P(\YY)$ denote the set of Borel probability measures on $\mathbb Y$ equipped with the weak convergence topology. For a  positive integer $k\geq 1$, let $C_k(\YY)$ denote the set of all subsets of $\YY$ of cardinality at most $k$. For $p\ge 1$ we define the functional
\begin{equation}
    \label{Phi}
\Phi_\YY\colon C_k(\YY)\times \mathcal P(\YY) \to \R,
\end{equation}
by
\[
\Phi_\YY(S,\mu) = \int_\YY d_\YY^p(y,S) \, \mu(dy) = \int_\YY \min_{x\in S} d_\YY^p(y,x) \, \mu(dy).
\]

For $A$ and $B$ in $C_k(\mathbb Y)$ the Hausdorff distance between them is given by 
$$
d_H(A,B)=\max\left\{\max_{a\in A}d_{\mathbb Y}(a,B),\, \max_{b\in B}d_{\mathbb Y}(b,A)\right\},
$$
where $d(x,C)=\min_{c\in C} d(x,c)$, for any $C\in C_k(\YY)$. 
Since $\YY$ is compact, the space $(C_k(\mathbb Y), d_H)$ is compact as well. Lemma 4 in \cite{JaffeKmeans} establishes that  $\Phi_\YY$  is continuous with respect to the Hausdorff topology in the first variable and the Wasserstein topology in the second one.
Thus, for each fixed $\mu \in \mathcal  P(\YY)$, $\Phi_\YY(\cdot, \mu)\colon C_k(\YY) \to \mathbb R$  is a continuous map on the compact space $C_k(\YY)$ and therefore it attains its minimum. The elements of $C_k(\YY)$ minimizing $\Phi_\YY(\cdot, \mu)$ are called the {\it k-}means of $\mu$. Unless strictly necessary, we do not emphasize the dependence on $p$. As expected, the most common choice is $p=2$.

Following \cite{Cuevas}, for a normalized metric measure space $\mathcal Y=(\YY, d_\YY, \mu)$ we define
\begin{equation}
\label{k_means}    
\mathcal S_\mathcal Y(k):=\{S\in C_k(\YY): \Phi_\YY(S,\mu)\leq \Phi_\YY(A,\mu)\; \hbox{for all $A\in C_k(\YY)$}\}.
\end{equation}
Observe that opposite to \cite{Cuevas} we use the whole measure metric space $\mathcal Y$ as a subscript instead of just the measure to emphasize that this functional depends also on the metric $d_\YY$ and the space $\YY$. We can now present the main result of this work.

\begin{thm}
\label{thm:convergence}
Let $\mathcal X_n = (\XX_n,d_n,\mu_n)$ and $\mathcal X=(\XX, d, \mu)$ be compact metric measure spaces such that $\mathcal X_n \overset{mGH}{\to} \mathcal X$.
Call $h^n\colon \XX_n \to \XX$ any sequence of $\varepsilon_n$-isometries involved in the mGH convergence of $(\mathcal X_n)$.  Then, for any $p\ge 1$ we have,
\begin{equation}
\label{eq:convergence}
\lim_{n\to\infty}\max_{S_n \in \mathcal S_{\mathcal X_n}(k)} \min_{S\in {\mathcal S}_{\mathcal X}(k)} d_H(h_n(S_n),S) = 0.
\end{equation}
\end{thm}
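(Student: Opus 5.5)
We argue by contradiction combined with compactness of $(C_k(\XX),d_H)$. Suppose \eqref{eq:convergence} fails. Then there exist $\eta>0$, a subsequence (not relabeled), and $S_n\in\mathcal S_{\mathcal X_n}(k)$ with $\min_{S\in\mathcal S_{\mathcal X}(k)} d_H(h_n(S_n),S)\ge\eta$. Since $h_n(S_n)\in C_k(\XX)$ and this space is compact, after passing to a further subsequence we have $h_n(S_n)\to S_\ast$ in $d_H$ for some $S_\ast\in C_k(\XX)$. It therefore suffices to show that $S_\ast\in\mathcal S_{\mathcal X}(k)$; this contradicts $d_H(h_n(S_n),S_\ast)\ge\eta$.

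\textbf{Comparing the two functionals.} The key estimate is that, uniformly in $A\in C_k(\XX_n)$,
\[
\bigl|\Phi_{\XX_n}(A,\mu_n)-\Phi_{\XX}(h_n(A),(h_n)_\#\mu_n)\bigr|\le \omega(\varepsilon_n),
\]
where $\omega(t)\to 0$ as $t\to 0$. This follows pointwise from the $\varepsilon_n$-isometry property: $|d_n(y,A)-d(h_n(y),h_n(A))|<\varepsilon_n$ for every $y\in\XX_n$. Moreover, all the diameters are uniformly bounded, since $\operatorname{diam}\XX_n\le\operatorname{diam}\XX+\varepsilon_n$. Because $t\mapsto t^p$ is Lipschitz on bounded intervals, we get $|d_n^p(y,A)-d^p(h_n(y),h_n(A))|\le C\varepsilon_n$. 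Integrating against $\mu_n$ and applying the change of variables for $(h_n)_\#\mu_n$ then gives the estimate with $\omega(\varepsilon_n)=C\varepsilon_n$. For the reverse direction we need to pull sets back: for any $B\in C_k(\XX)$, choose $A\in C_k(\XX_n)$ with $d_H(h_n(A),B)<\varepsilon_n$, which is possible by the $\varepsilon_n$-density of $h_n(\XX_n)$.

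\textbf{Passing to the limit.} Let $\nu_n=(h_n)_\#\mu_n$, so $\nu_n\to\mu$ weakly. On the compact space $\XX$, weak convergence is equivalent to $W_p$ convergence. Since $(S,\nu)\mapsto\Phi_\XX(S,\nu)$ is jointly continuous (Lemma 4 of \cite{JaffeKmeans}), we obtain $\Phi_\XX(h_n(S_n),\nu_n)\to\Phi_\XX(S_\ast,\mu)$. Now fix an arbitrary $B\in C_k(\XX)$ and pick $A_n$ as above. Using the optimality of $S_n$ together with the comparison estimate,
\[
\Phi_\XX(h_n(S_n),\nu_n)\le \Phi_{\XX_n}(S_n,\mu_n)+C\varepsilon_n\le \Phi_{\XX_n}(A_n,\mu_n)+C\varepsilon_n\le \Phi_\XX(h_n(A_n),\nu_n)+2C\varepsilon_n .
\]
Since $h_n(A_n)\to B$ in $d_H$ and $\nu_n\to\mu$, joint continuity shows the right-hand side tends to $\Phi_\XX(B,\mu)$. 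Hence $\Phi_\XX(S_\ast,\mu)\le\Phi_\XX(B,\mu)$ for every $B$, so $S_\ast\in\mathcal S_{\mathcal X}(k)$, which is the desired contradiction.

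\textbf{Main obstacle.} The delicate point is the joint continuity of $\Phi_\XX$ along the sequence $(h_n(S_n),\nu_n)$, where the set and the measure vary simultaneously. If Lemma 4 of \cite{JaffeKmeans} is only separately continuous, I would prove joint continuity directly. The map $S\mapsto d^p(\cdot,S)$ is uniformly continuous in $d_H$, with $\sup_y|d^p(y,S)-d^p(y,S')|\le C\,d_H(S,S')$. So
\[
|\Phi(S_n,\nu_n)-\Phi(S,\mu)|\le C\,d_H(S_n,S)+\Bigl|\int d^p(\cdot,S)\,d\nu_n-\int d^p(\cdot,S)\,d\mu\Bigr|,
\]
and the last term tends to zero by weak convergence, since $d^p(\cdot,S)$ is a bounded continuous function. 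A second subtlety is that $h_n$ need not be measurable. I would either assume it is, as is implicit in the definition of mGH convergence, or replace $h_n$ by a Borel $2\varepsilon_n$-isometry obtained from a finite partition, which is standard.
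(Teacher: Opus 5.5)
Your proposal is correct and follows essentially the same route as the paper: compactness of $(C_k(\XX),d_H)$ to extract a limit $S_\ast$ of $h_n(S_n)$, and the uniform comparison $|\Phi_{\XX_n}(A,\mu_n)-\Phi_\XX(h_n(A),(h_n)_\#\mu_n)|\le C\varepsilon_n$ obtained from the $\varepsilon_n$-isometry and the Lipschitz property of $t\mapsto t^p$ on bounded intervals. As in the paper, you then pull back an arbitrary competitor $B$ through the density of $h_n(\XX_n)$ and pass the same three-inequality chain to the limit using continuity of $\Phi_\XX$; your direct proof of joint continuity and your remark on the measurability of $h_n$ are sound details that the paper leaves implicit, the former by simply citing Jaffe's Lemma 4.
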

It is worth mentioning that the well-definiteness of both the $\max$ and $\min$ in \eqref{eq:convergence} can be established by usual continuity and compactness arguments.

This theorem is in the same spirit as \cite[Theorem~2]{JaffeKmeans}, with the
important difference that here the distance function is unknown and is allowed to be estimated from the data. It should be interpreted as a \emph{no–false-positive guarantee}: for every $\varepsilon>0$, there exists $N\in\mathbb{N}$ such that for
all $n\geq N$, and for every set of $k$-barycenters
$S_n \in \mathcal S_{\mathcal X_n}(k) $ of $(\XX_n, d_n, \mu_n)$, after taking its image by the $\varepsilon_n$-isometry, there exists a
corresponding set of $k$-barycenters ${S}\in \mathcal S_{\mathcal X}(k)$ of $(\XX, d, \mu)$
that is $\varepsilon$-close in the Hausdorff distance, i.e.
\[
d_H(h_n(S_n), S ) \le \varepsilon.
\]
Before turning to the proof of Theorem~\ref{thm:convergence}, we state an immediate
corollary, which recovers Jaffe’s results in the case of a fixed known distance
\cite{JaffeKmeans} and will play a crucial role when applied to Wasserstein
$k$-barycenters (see Section~\ref{sec:Wasserstein}).

\begin{corol}\label{cor:embeddedmGH}
Assume that $(\YY, d_\YY)$ is a compact metric space, and that $(\mu_n)_{n\ge 1}$
is a sequence of probability measures on $\YY$ converging weakly toward a probability
measure $\mu$ on $\YY$. Let $k\ge 1$, and denote by $\mathcal{S}_{\mathcal Y_n}(k)$ the set of
$k$-means for the metric measure space $\mathcal Y_n=(\YY, d_\YY, \mu_n)$, and by $\mathcal{S}_{\mathcal Y}(k)$
the set of $k$-means for $\mathcal Y=(\YY, d_\YY, \mu)$. Then,
\[
\lim_{n\to\infty}\max_{S_n \in \mathcal S_{\mathcal Y_n}(k)} \min_{S\in \mathcal S_{\mathcal Y}(k)} d_H(S_n,S) = 0.
\]
\end{corol}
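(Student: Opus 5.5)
The plan is to obtain Corollary~\ref{cor:embeddedmGH} as the special case of Theorem~\ref{thm:convergence} in which the underlying metric space does not change. Set $\XX_n=\XX=\YY$, $d_n=d=d_\YY$, and let $\mu_n$ and $\mu$ be the given measures. Then $\mathcal X_n=\mathcal Y_n$ and $\mathcal X=\mathcal Y$ are compact normalized metric measure spaces. For every $n$ take $h_n=\mathrm{id}_\YY$.

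First I check that these are admissible $\varepsilon_n$-isometries. For all $x,x'$ we have $|d_\YY(x,x')-d_\YY(h_n(x),h_n(x'))|=0$, and $d_\YY(y,h_n(\YY))=0$ for every $y$. Both conditions in the definition are strict inequalities, so they hold for any $\varepsilon_n>0$. Choose for instance $\varepsilon_n=1/n\to 0$.

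Next, $(h_n)_\#\mu_n=\mu_n$, which converges weakly to $\mu$ by hypothesis. Hence $\mathcal Y_n\overset{mGH}{\to}\mathcal Y$ through the maps $h_n=\mathrm{id}_\YY$.

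Now apply Theorem~\ref{thm:convergence} with this particular sequence of isometries; the theorem allows any sequence involved in the mGH convergence. Since $h_n(S_n)=S_n$, equation~\eqref{eq:convergence} is exactly the claimed limit with $\mathcal S_{\mathcal X_n}(k)=\mathcal S_{\mathcal Y_n}(k)$ and $\mathcal S_{\mathcal X}(k)=\mathcal S_{\mathcal Y}(k)$. This holds for any $p\ge 1$, in particular for the $p$ implicit in the definition of the $k$-means.

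There is no real obstacle. The only point to watch is that the definition of $\varepsilon$-isometry uses strict inequalities, which is why I pick $\varepsilon_n>0$ rather than $\varepsilon_n=0$. Beyond that, the statement is a direct specialization.
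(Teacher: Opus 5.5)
Your proposal is correct and takes the same approach as the paper. The paper also deduces the corollary from Theorem~\ref{thm:convergence} by verifying that $(\YY,d_\YY,\mu_n)\to(\YY,d_\YY,\mu)$ in the mGH sense; you make that check explicit with $h_n=\mathrm{id}_\YY$, $\varepsilon_n=1/n$ and $(h_n)_\#\mu_n=\mu_n\to\mu$, so that $h_n(S_n)=S_n$.
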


\begin{proof}
The proof follows immediately from Theorem~\ref{thm:convergence}, once we verify that
the sequence of metric measure spaces $(\YY, d_\YY, \mu_n)$ converges to
$(\YY, d_\YY, \mu)$ in the measured Gromov-Hausdorff topology.
\end{proof}

\begin{rmq}\label{rmk:trueconvergence}
One could ask whether the stronger convergence
\[
\lim_{n\to\infty} \hat d_H\big(h_n(\mathcal S_{\mathcal X_n}(k)),\, \mathcal S_{\mathcal X}(k)\big)=0
\]
holds under the assumptions of Theorem \ref{thm:convergence}, with $\hat d_H$ the Hausdorff distance in $C(C_k(\XX))$, the set of compact subsets of $C_k(\XX)$. This stronger, two-sided Hausdorff convergence typically requires uniqueness of the population $k$-means. In the absence of uniqueness, only weaker notions of convergence 
(Kuratowski outer limit, one-sided Hausdorff, etc.) can be guaranteed, see e.g., \cite{Schotz2020,EvansJaffe2024,Carcamo2024} for discussion and examples. The theorem above says that the only way in which this convergence can fail is because $\mathcal S_{\mathcal X_n}(k)$ is too small compared to $\mathcal S_{\mathcal X}(k)$. When uniqueness holds ($\mathcal S_{\mathcal X}(k)$ is a singleton), it can not be the case and the above theorem can be re-written as
\begin{equation}
\label{conv.uniqueness}
\lim_{n\to\infty}\max_{S_n \in \mathcal S_{\mathcal X_n}(k)} d_H(h_n(S_n),\mathcal S_{\mathcal X}(k)) = 0.   
\end{equation}

\end{rmq}

\begin{rmq}\label{rmk:SturmDdistance} A very natural question following from Theorem \ref{thm:convergence} is to know whether $k$-means are continuous with respect to Sturm's $\mathbf D-$distance.
We would like to point out that convergence of metric measure spaces in this topology is equivalent to the mGH convergence, in the case of compact metric measure spaces with full supports and uniform bounds for the doubling constants and the diameters, see \cite[Lemma 3.18]{sturm2006geometry}. Those assumptions are very natural as long as we stay in the compact case. Only in the non-compact case, Sturm's $\mathbf D-$distance give a stricty weaker topology that mGH.
    
\end{rmq}

\begin{rmq}
Consider the following example. Let $\mathcal X_n=(\XX_n,d_n,\mu_n)$ be a space with three points $\XX_n=\{x_n,y_n,z_n\}\subset \R$. Set $x_n=0$, $y_n=n$ and $z_n=n^2$,   $d_n$ to be the Euclidean distance and $\mu_n(\{z_n\})=1/n=1-\mu_n(\{x_n\})$ (so that $\mu_n(\{y_n\})=0$). We have $\mathcal X_n \to \mathcal X$, the trivial (normalized) singleton space in Gromov-weak sense, but it does not converge in mGH sense. Take $k=1$. The Fréchet mean $\mathcal S_{\mathcal X_n}(1)=n$ for all $n$, while $\mathcal S_{\mathcal X}(1)=0$. One could ask if there is an $\varepsilon_n$-isometry such that $h_n(\mathcal S_{\mathcal X_n}(1))\to  \mathcal S_{\mathcal X}(1)$, but this is not possible since $\XX_n$ has three points with diverging distances and $\mathcal X$ is the singleton normalized metric measure space.
This example shows that Gromov-weak convergence is not enough to have convergence of the means (even up to $\epsilon_n-$isometries).
\end{rmq}

\begin{proof}[Proof of Theorem \ref{thm:convergence}.]
The proof consists in showing that each subsequence of $h_n(S_n)$, with $S_n \in \mathcal S_{\mathcal X_n}(k)$, has a further convergent subsequence whose limit point belongs to $\mathcal S_{\mathcal  X}(k)$.
For that purpose, note that each element of the subsequence of $h_n(S_n)$ belongs to the {compact space $C_k(\mathbb X)$.} Therefore, there exists a further convergent subsequence to some $S\in C_k(\mathbb X)$. We need to prove that $S$ belongs to $\mathcal S_{\mathcal  X}(k)$. To avoid annoying subscripts, assume that $d_H(h_n(S_n),S)\to 0$. The goal now is to show that $\Phi_{\XX}(S,\mu) \le \Phi_{\XX}(S',\mu)$ for all $S' \in C_k(\XX)$.

In the following we denote $\mu^h_n={(h_{n})}_\#(\mu_n)$.  
Observe that, since $(\XX,d)$ is compact, the weak convergence of $\mu^h_n \to  \mu$, guaranteed by the measured Gromov-Hausdorff convergence assumption, implies the convergence in the Wasserstein topology. From the mentioned  continuity of $\Phi_\XX$ (see \cite[Lemma 4]{JaffeKmeans}), we get
\begin{align*}
    \Phi_{\XX}\left(S,\mu\right) & = \lim_{n\to \infty} \Phi_{\XX}\left(h_n(S_n), \mu_n^h \right).
\end{align*}
By the mGH convergence, given $S^\prime\in C_k(\XX)$ there exists  $S_n^\prime\in C_k(\XX_n)$ such that $d_H(h_n(S_n^\prime), S^\prime)\to 0$. Invoking again the continuity of $\Phi_\XX$, we also get that
\begin{align*}
    \Phi_{\XX}\left(S^\prime,\mu\right) & = \lim_{n\to \infty} \Phi_{\XX}\left(h_n(S_n^\prime), \mu_n^h\right).
\end{align*}
The mGH convergence of $\mathcal X_n$ to $\mathcal X$ guarantees the existence of a constant $C$ such that $d_n(x,y)\leq C$  for all $x, y \in \XX_n$, for all $n$ and $d(x,y)\leq C$ for all $x, y \in \mathbb X$. This condition, combined with the $\epsilon_n$-isometry property, and a Taylor expansion of order one for  $f(x)=x^p$ implies that
\begin{equation}
    \label{quase-triangular}
\vert  d^p(h_n(x),h_n(y)) -d_n^p(x,y)\vert \leq\tilde C \varepsilon_n\;,\forall x,y\in \XX_n \;.
\end{equation}
Thus, for any $A\in C_k(\XX_n)$, we get that 
\begin{equation}
  \label{extending_iso}
 \vert \Phi_\XX(h_n(A), \mu_n^h) -\Phi_{\XX_n}(A,\mu_n)\vert \leq \tilde C\varepsilon_n.
\end{equation}

Invoking \eqref{extending_iso} twice with $A=S_n$ and $A=S_n^\prime$ respectively and recalling that $\Phi_{\XX_n}\left(S_n, \mu_n^h \right)\leq \Phi_{\XX_n}\left(S_n^\prime, \mu_n^h\right)$ we get that,
\begin{align*}
  \Phi_{\XX}\left(h_n(S_n), \mu_n^h \right)\leq    \Phi_{\XX_n}\left(S_n, \mu_n^h \right)+\tilde C\varepsilon_n\leq   \Phi_{\XX_n}\left(S_n^\prime, \mu_n^h\right)+ \tilde C\varepsilon_n\leq \Phi_{\XX}\left(h_n(S_n^\prime), \mu_n^h \right)+2\tilde C\varepsilon_n \;.
\end{align*}
Taking $n\to \infty$, we conclude the announced result.
\end{proof}

\subsection{Stability of Clusters}

The stability of Voronoi cells is a very natural question. However, to the best of our knowledge, it has so far been addressed only in the Euclidean setting; see Reem's paper \cite{reem2011geometric}. In \cite{garcia2020error} rates of convergence for the clusters in the $L^2$ sense are proved for the spectral clustering algorithm on a Riemannian manifold. In this section, we prove the stability of Voronoi cells with respect to the Hausdorff topology in the general setting of compact metric spaces.

Recall that when the set of $k$ centroids $S=\{b_1,\cdots,\,b_k\}$ is given, the $k$ associated clusters $V_{b_1},\cdots, V_{b_k}$ are defined as the Voronoi cells of the centroids with respect to the set $S$:
\begin{equation}\label{def:voronoicells}
    V_{b_i} := \left\{x\in\XX\,\left|\, \forall b\in S,\, d(x,b_i)\leq d(x,b)\, \right\}\right..
\end{equation}
We use $\mathcal V (S)$ for the family of Voronoi cells $\{V_{b_1},\cdots, V_{b_k}\}$.

We can now state the main result of this section, establishing the continuity of Voronoi cells with respect to their  centers. 

\begin{thm}
\label{cont_vor}
    Let $(\XX,d)$ be a compact metric space. Assume that $(S_n)_{n\geq 1}$ is a sequence of finite subsets of $\XX$ converging to some finite $S$ in the Hausdorff distance: $d_H(S_n, S)\to 0$. Then, 
$$\lim_{n\to \infty} \,\max_{V\in \mathcal V(S_n)} \,\min_{W\in \mathcal V(S)} \max_{v\in V} d(v, W) \,=0.$$
\end{thm}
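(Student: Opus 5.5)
The plan is to argue by contradiction, using compactness of $\XX$ and the fact that Voronoi cells are defined by closed, non-strict inequalities, which pass to limits. First, some routine well-posedness remarks. Each cell $V_{b}$ in \eqref{def:voronoicells} is a finite intersection of closed sets $\{x: d(x,b)\le d(x,b')\}$, so it is compact, and it is nonempty since it contains $b$. Hence $v\mapsto d(v,W)$ is continuous on the compact set $V$, and the inner maximum is attained. Both $\mathcal V(S_n)$ and $\mathcal V(S)$ are finite families, so the outer max and min are also attained.

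Suppose the conclusion fails. Then there are $\varepsilon>0$, a subsequence (not relabelled), centers $b^n\in S_n$ and points $v_n\in V_{b^n}\in\mathcal V(S_n)$ such that
\[
d(v_n,W)\ge \varepsilon \quad\text{for every } W\in\mathcal V(S) \text{ and every } n.
\]
Since $d(b^n,S)\le d_H(S_n,S)\to 0$ and $S$ is finite, we can pass to a further subsequence along which $b^n\to b$ for some fixed $b\in S$. By compactness of $\XX$, we may also assume $v_n\to v\in\XX$.

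The key step is to show that $v\in V_b$, the Voronoi cell of $b$ with respect to $S$. Fix $c\in S$. Since $d(c,S_n)\le d_H(S_n,S)\to0$, we can choose $c_n\in S_n$ with $c_n\to c$. Because $v_n\in V_{b^n}$ and $c_n\in S_n$, we have $d(v_n,b^n)\le d(v_n,c_n)$. Letting $n\to\infty$ and using continuity of $d$ gives $d(v,b)\le d(v,c)$. As $c\in S$ was arbitrary, $v\in V_b$. Consequently
\[
d(v_n,V_b)\le d(v_n,v)\to 0,
\]
which contradicts $d(v_n,V_b)\ge\varepsilon$. This proves the theorem.

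I do not expect a serious obstacle here. The one point that needs care is that only this one-sided statement can hold. When $S_n$ has several points converging to the same point of $S$, or when cells of $S$ have ties, the cells of $S_n$ can be strictly smaller than the cells of $S$. The argument handles this because it only ever uses the non-strict inequality defining membership in a cell, which is preserved under limits. For the same reason no assumption that $|S_n|=|S|$ is needed.
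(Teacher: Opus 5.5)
Your limiting step is correct, but the hypothesis you argue against is not the negation of the theorem. If the conclusion fails, you get $\varepsilon>0$ and cells $V_{b^n}\in\mathcal V(S_n)$ with $\max_{v\in V_{b^n}} d(v,W)\ge\varepsilon$ for every $W\in\mathcal V(S)$. That is, for each $W$ there is a point of $V_{b^n}$, \emph{depending on $W$}, at distance at least $\varepsilon$ from $W$. You instead assume one point $v_n$ with $d(v_n,W)\ge\varepsilon$ for all $W\in\mathcal V(S)$ simultaneously. That hypothesis can never hold. Since $S$ is finite, every point of $\XX$ has a nearest center, so the cells of $\mathcal V(S)$ cover $\XX$ and $\min_{W\in\mathcal V(S)} d(v_n,W)=0$ always. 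As written, your contradiction refutes a trivially false statement and proves nothing about the theorem.

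The repair is a reordering, because your argument only ever uses the single cell $V_b$:
\begin{enumerate}[(i)]
\item First extract the subsequence along which $b^n\to b\in S$.
\item Then apply the correct negation with $W=V_b$ to pick $v_n\in V_{b^n}$ with $d(v_n,V_b)\ge\varepsilon$.
\item Finally extract $v_n\to v$ by compactness.
\end{enumerate}
Your step with $c_n\to c$ and $d(v_n,b^n)\le d(v_n,c_n)$ then gives $v\in V_b$. Hence $d(v_n,V_b)\le d(v_n,v)\to 0$, which is a contradiction.

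With this fix your proof is complete and shorter than the paper's. The paper splits the same compactness idea into two lemmas. Lemma \ref{thm:cvclusters2} shows that the enlarged region $W(\delta)$ tends to $W$ in Hausdorff distance. Lemma \ref{thm:cvclusters2b} gives the triangle-inequality inclusion $V\subset W(\delta)$ once $d_H(S_n,S)\le\delta/2$. The paper's decomposition buys an explicit modulus that depends on $S_n$ only through $d_H(S_n,S)$, taken uniformly over finitely many limit sets. It reuses this directly in Theorem \ref{cor:consistenceclusteringundermGH}. Your sequential argument gives only the qualitative limit.
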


This theorem is in the same spirit as Theorem \ref{thm:convergence}. It should also be interpreted as a \emph{no–false-positive guarantee}: for every $\varepsilon>0$, there exists $N\in\mathbb{N}$ such that for
all $n\geq N$, and for every cluster
$V\in \mathcal V(S_n)$, there exists a
corresponding `true' cluster $W\in \mathcal V(S)$ whose $\varepsilon-$neighborhood contains the empirical cluster, i.e. $ V\subset W^\varepsilon.$

{In what follows, we present an extension of Theorem \ref{cont_vor} to the context of clustering, contemplating Gromov-Hausdorff convergence and the existence of at most a finite numbers of  elements on $\mathcal S_{\mathcal X}(k)$. }

\begin{thm}
\label{cor:consistenceclusteringundermGH}
Let $\mathcal X_n = (\XX_n,d_n,\mu_n)$ and $\mathcal X=(\XX, d, \mu)$ be compact metric measure spaces such that $$\mathcal X_n \overset{mGH}{\to} \mathcal X.$$
Let $h_n\colon \XX_n \to \XX$ denote any sequence of $\varepsilon_n$-isometries arising in the mGH convergence of $(\mathcal X_n)_n$. Assume that $\mathcal S_\mathcal{X}(k)$ is finite. Consider the set of all Voronoi cells associated to any $h_n(S_n)$ with $S_n\in \mathcal S_{\mathcal X_n}(k)$, given by 
$$\mathcal U_n :=\bigcup_{S_n\in \mathcal S_{\mathcal X_n}(k)} \mathcal V(h_n(S_n)),$$
and the set of all Voronoi cells associated to any $S\in \mathcal S_{\mathcal X}(k)$ given by
$$\mathcal U :=\bigcup_{S\in \mathcal S_{\mathcal X}(k)} \mathcal V(S).$$
Then, 
$$\lim_{n\to \infty} \max_{V\in \mathcal U_n} \,\min_{W\in \mathcal U} \,\max_{v\in V}\, d(v, W)=0.$$
\end{thm}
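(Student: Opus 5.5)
I will argue by contradiction and reduce to Theorem~\ref{thm:convergence} and Theorem~\ref{cont_vor}. Suppose the limit fails. Then there are $\eta>0$, a subsequence (still indexed by $n$), sets $S_n\in\mathcal S_{\mathcal X_n}(k)$ and cells $V_n\in\mathcal V(h_n(S_n))$ such that
\[
\min_{W\in\mathcal U}\max_{v\in V_n} d(v,W)\ \ge\ \eta \quad\text{for all } n .
\]
The maximum over $\mathcal U_n$ is attained or, if not, approximated within $\eta/2$, and this is enough for the argument. The goal is to show that, along a further subsequence, the quantity $\min_{W\in\mathcal V(S)}\max_{v\in V_n} d(v,W)$ tends to $0$ for some fixed $S\in\mathcal S_{\mathcal X}(k)$. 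Since $\mathcal V(S)\subset\mathcal U$, this contradicts the inequality above.

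\textbf{Step 1: centers converge to a fixed population $k$-mean.} Theorem~\ref{thm:convergence} gives, for each $n$, some $S^{(n)}\in\mathcal S_{\mathcal X}(k)$ with $d_H(h_n(S_n),S^{(n)})\to0$. This is where finiteness of $\mathcal S_{\mathcal X}(k)$ enters. Some element $S\in\mathcal S_{\mathcal X}(k)$ equals $S^{(n)}$ for infinitely many $n$. Passing to that subsequence, we get $d_H(h_n(S_n),S)\to 0$ with $S$ fixed and finite. Without finiteness one could still extract a convergent subsequence $S^{(n)}\to S$ by compactness of $C_k(\XX)$ together with closedness of $\mathcal S_{\mathcal X}(k)$. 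The pigeonhole argument is simply cleaner, and it matches the hypothesis as stated.

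\textbf{Step 2: apply Voronoi stability.} The sets $h_n(S_n)$ are finite subsets of the compact space $\XX$ converging in Hausdorff distance to the finite set $S$. Theorem~\ref{cont_vor} therefore gives
\[
\max_{V\in\mathcal V(h_n(S_n))}\min_{W\in\mathcal V(S)}\max_{v\in V}d(v,W)\to0 .
\]
In particular this holds for the chosen cells $V_n$, which yields the contradiction.

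The main obstacle is bookkeeping rather than analysis. The first point is that the Voronoi cells of $h_n(S_n)$ are taken in $\XX$ with the limit metric $d$, as the statement indicates, so no comparison between $d_n$ and $d$ cells is needed. The second point is that $h_n(S_n)$ may have fewer than $k$ points, since $h_n$ need not be injective. This is harmless, because $C_k$ allows cardinality at most $k$ and Theorem~\ref{cont_vor} imposes no cardinality constraint. Finally, the maximum over the possibly infinite family $\mathcal U_n$ is handled by the near-maximizer choice in the contradiction setup.
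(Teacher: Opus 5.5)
Your proof is correct, but it is organized differently from the paper's.

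\textbf{The paper's route.} It argues directly. Finiteness of $\mathcal S_{\mathcal X}(k)$ makes $\mathcal U$ finite, so Lemma~\ref{thm:cvclusters2} gives a single $\delta$ with $W(\delta)\subset W^\varepsilon$ for every $W\in\mathcal U$ at once. Theorem~\ref{thm:convergence} then provides, for all large $n$ and uniformly over $S_n\in\mathcal S_{\mathcal X_n}(k)$, some $S_\infty\in\mathcal S_{\mathcal X}(k)$ with $d_H(h_n(S_n),S_\infty)<\delta/2$. Lemma~\ref{thm:cvclusters2b} then gives $V\subset W(\delta)\subset W^\varepsilon$. In effect the paper re-runs the proof of Theorem~\ref{cont_vor} with a $\delta$ that is uniform over all population $k$-means.

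\textbf{Your route.} You argue by contradiction, fix a single limit $S$ by pigeonhole, and apply Theorem~\ref{cont_vor} as a black box at that one $S$.

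\textbf{What each buys.} The paper's version is a direct $\varepsilon$--$\delta$ argument. Its threshold $n_0$ is governed by the uniform $\delta$ and by how fast the limit in Theorem~\ref{thm:convergence} is approached, so it would yield rates if rates were available for Theorem~\ref{thm:convergence} and Lemma~\ref{thm:cvclusters2}. Yours is purely qualitative. In exchange, your route does not actually need finiteness, as your side remark already indicates. $\mathcal S_{\mathcal X}(k)$ is closed in the compact space $C_k(\XX)$ because $\Phi_\XX(\cdot,\mu)$ is continuous. The proof of Theorem~\ref{thm:convergence} also shows directly that every subsequence of $h_n(S_n)$ has a further subsequence converging to a point of $\mathcal S_{\mathcal X}(k)$. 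So the same contradiction goes through for an arbitrary set of population $k$-means, provided $\min_{W\in\mathcal U}$ is read as an infimum. In your proof the pigeonhole step is only a convenience, whereas in the paper's proof finiteness is exactly what supplies the single $\delta$.
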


\begin{rmq}
Note that distinct centroids may coincide, and consequently the number of clusters may be strictly less than $k$.
\end{rmq}

We now turn to the proof of Theorem \ref{cont_vor}. We begin by stating the following results that will be used along its proof, included at the end of this section. 
 
\begin{lemm}\label{thm:cvclusters2}
Let $(\XX,d)$ be a compact metric space, and let $B=\{b_1,\cdots,b_k\}\subset \XX$ a finite subset. We fix $b\in B$ and consider $W$, its associated Voronoi cell.
We define the enlarged Voronoi region
\[
W(\delta):=\left\{x\in \XX \,\middle|\, \forall b'\in B,\ d(x,b)\leq d(x,b')+\delta\right\}.
\]
Then it holds that 
\[
d_H(W,W(\delta)) \to 0,\quad \text{when }\,\, \delta\to 0.
\]
\end{lemm}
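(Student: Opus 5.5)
Since $W\subset W(\delta)$ for every $\delta\ge 0$, we have $d_H(W,W(\delta))=\sup_{x\in W(\delta)} d(x,W)$. The plan is to argue by contradiction using compactness of $\XX$, and to use the fact that $\delta\mapsto W(\delta)$ is monotone with
\[
\bigcap_{\delta>0} W(\delta)=W .
\]
The intersection identity holds because if $d(x,b)\le d(x,b')+\delta$ for all $\delta>0$, then $d(x,b)\le d(x,b')$. Each $W(\delta)$ is closed, since it is defined by non-strict inequalities between the continuous functions $x\mapsto d(x,b)$ and $x\mapsto d(x,b')+\delta$. Hence each $W(\delta)$ is compact, and $W=W(0)$ is compact and nonempty (it contains $b$).

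Suppose the claim fails. Then there exist $\eta>0$, a sequence $\delta_j\downarrow 0$, and points $x_j\in W(\delta_j)$ with $d(x_j,W)\ge\eta$. By compactness of $\XX$ we may extract a subsequence with $x_j\to x\in\XX$. For each fixed $b'\in B$ we have $d(x_j,b)\le d(x_j,b')+\delta_j$, and passing to the limit by continuity of the distance gives $d(x,b)\le d(x,b')$. Since $B$ is finite, this holds for all $b'\in B$ simultaneously, so $x\in W$. On the other hand, $x\mapsto d(x,W)$ is $1$-Lipschitz, so $d(x,W)=\lim_j d(x_j,W)\ge\eta>0$, which is a contradiction.

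Alternatively, one can argue directly: the function $f(x)=d(x,W)$ is continuous, so the sets $W(\delta)\cap\{f\ge\eta\}$ form a decreasing family of compact sets with empty intersection. By the finite intersection property, one of them is already empty, giving the same conclusion.

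There is no real obstacle here. The only points needing care are the closedness of $W(\delta)$ and the fact that the limit inequality holds for every $b'$ at once, which is immediate since $B$ is finite. The statement does not require any uniformity with respect to $b$ or $B$, so none needs to be proved.
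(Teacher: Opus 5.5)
Your proof is correct and follows essentially the same route as the paper: argue by contradiction, take points of $W(\delta_n)$ at distance at least $\varepsilon$ from $W$, extract a convergent subsequence by compactness, and pass to the limit in the defining inequalities to land in $W$. You also make explicit the step the paper leaves implicit, namely that $x\mapsto d(x,W)$ is $1$-Lipschitz, so the limit point stays at distance at least $\eta$ from $W$, which gives the contradiction.
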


\begin{proof}
Assume by contradiction the existence of $\varepsilon>0$ and $\delta_n\to 0$ such that   $d_H(W,W(\delta_n))\geq \varepsilon$.  Since $W\subset W(\delta)$, 
\begin{equation}
    \label{lejos}
\varepsilon\leq d_H(W,W(\delta_n))=\max_{\tilde w\in W(\delta_n)}d(\tilde w, W)=d(\tilde w_n, W), \quad\hbox{for some $\tilde w_n \in W(\delta_n)$},
\end{equation}
meaning that 
\begin{equation}
\label{agarrados}
\forall b'\in B,\quad d(\tilde w_n ,b)\leq d(\tilde w_n ,b') + \delta_n.
\end{equation}
By compactness,  there exists $\tilde w_{n_k}\to x$, for some $x$ in $\XX$.  From \eqref{agarrados}, we conclude that 
$x\in W$, contradicting \eqref{lejos}. 
\end{proof}

\begin{lemm}\label{thm:cvclusters2b}
Let $(\XX,d)$ be a compact metric space, and let $A$ and $B$ be two finite subsets of $\XX$ with $d_H(A, B)\leq \delta/2$. Pick $a\in A$ and $b\in B$ with $d(a, b)\leq \delta/2$ and let $V$ and $W$ denote their Voronoi cells in $\mathcal V(A)$ and $\mathcal V(B)$, respectively. 
Then, 
$$V\subset W(\delta)$$
where $W(\delta)$ denotes the enlarged Voronoi region defined in Lemma \ref{thm:cvclusters2}.
\end{lemm}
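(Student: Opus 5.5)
The plan is a direct triangle-inequality argument: take an arbitrary point of $V$ and check the defining inequality of $W(\delta)$ against every $b'\in B$.

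Fix $x\in V$, so that $d(x,a)\le d(x,a')$ for every $a'\in A$. Let $b'\in B$ be arbitrary. Since $d_H(A,B)\le\delta/2$, we have $d(b',A)\le \delta/2$. Because $A$ is finite, this distance is attained, so there is $a'\in A$ with $d(a',b')\le\delta/2$.

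We then chain the triangle inequality with the Voronoi property of $x$ relative to $a$:
\[
d(x,b)\le d(x,a)+d(a,b)\le d(x,a')+\tfrac{\delta}{2}\le d(x,b')+d(b',a')+\tfrac{\delta}{2}\le d(x,b')+\delta .
\]
The first step uses the triangle inequality, the second uses $x\in V$ together with $d(a,b)\le\delta/2$, and the last two use the triangle inequality and $d(a',b')\le\delta/2$. Since $b'\in B$ was arbitrary, $x$ satisfies the defining condition of $W(\delta)$ from Lemma \ref{thm:cvclusters2}. As $x\in V$ was arbitrary, this gives $V\subset W(\delta)$.

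There is no real obstacle here. The only point needing care is the direction in which the Hausdorff bound is used. We need that every $b'\in B$ has a nearby point of $A$ (the term $\max_{b\in B} d(b,A)$ in $d_H$), not the converse. The pairing of $a$ with $b$ is supplied by the hypothesis $d(a,b)\le\delta/2$. Compactness of $\XX$ plays no role beyond ensuring that the minima defining $d(\cdot,A)$ are attained, and finiteness of $A$ already guarantees this.
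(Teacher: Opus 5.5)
Your proof is correct and follows the paper's argument essentially verbatim: for each $b'\in B$ you pick $a'\in A$ with $d(b',a')=d(b',A)\le\delta/2$. You then run the same four-step chain $d(x,b)\le d(x,a)+d(a,b)\le d(x,a')+\delta/2\le d(x,b')+d(b',a')+\delta/2\le d(x,b')+\delta$.
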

\begin{proof}
  Pick $v\in V$. To show that $v\in W(\delta)$, consider  $b'\in B$ and let $a'\in A$ satisfy $d(b',a')=d(b',A)\leq \delta/2$. Then
\begin{align*}
d(v,b) &\le d(v,a)+d(a,b)\\
&\le d(v,a')+\delta/2 \\
&\le d(v,b') + d(b',a') + \delta/2\\
&\le d(v,b') +\delta,
\end{align*}
using successively: (i) the triangle inequality, (ii) $v\in V$, (iii) the triangle inequality again, and (iv) $d(a',b')\le \delta/2$.  Since $b' \in B$ was arbitrary, we conclude that  $v\in W(\delta)$ and hence $V\subset W(\delta)$.\end{proof}

\begin{proof}[Proof of Theorem \ref{cont_vor}.]
From Lemma \ref{thm:cvclusters2},  since $\mathcal V(S)$ is finite, we get that 
$$\lim_{\delta\to 0} \, \max_{W\in \mathcal V(S)} \,d_H(W, W(\delta))=0.$$
Given $\varepsilon>0$, choose $\delta$ such that 
\begin{equation}
d_H(W,W(\delta))\leq \varepsilon, \quad \hbox{for all $W\in \mathcal V(S)$ and, therefore, $W(\delta)\subset W^\varepsilon$}, 
\end{equation}
where 
\[     
W^\varepsilon=\{x\in \XX: d(x, W) \leq \varepsilon\},
\]
denotes the $\varepsilon-$neighborhood of $W$.
For such a $\delta$, consider $n_0$ with $d_H(S_n, S)\leq \delta/2$, for $n\geq n_0$. 
Now, let  $V\in \mathcal V(S_n)$ be the Voronoi cell associated to $a$ and choose $b\in S$ with $d(a, b)\leq \delta/2$. Thus, {if $W$ denotes the Voronoi cell in $\mathcal V(S)$ associated to $b$, invoking  Lemma \ref{thm:cvclusters2b}, we get that }
$$V\subset W(\delta)\subset W^\varepsilon,$$
meaning that for any $v\in V$
$$d(v, W)\leq \varepsilon,$$ which concludes the proof.
\end{proof}

We now turn to the proof of Theorem \ref{cor:consistenceclusteringundermGH}. It is based on Theorem \ref{thm:convergence} which guarantees that each subsequence of $h_n(S_n)$, with $S_n \in \mathcal S_{\mathcal X_n}(k)$ has a further convergent subsequence whose limit point belongs to $\mathcal S_{\mathcal  X}(k)$. This can then be applied with Theorem \ref{cont_vor} thanks to the assumption that $\mathcal S_\mathcal{X}(k)$ is finite.

\begin{proof}[{Proof of Theorem  \ref{cor:consistenceclusteringundermGH}. }]
Since $\mathcal S_{\mathcal X}(k)$ is finite, we get that also $\mathcal U$ is finite. Therefore, from  Lemma \ref{thm:cvclusters2}, we obtain that 
$$\lim_{\delta\to 0} \, \max_{W\in \mathcal U} \,d_H(W, W(\delta))=0.$$
Given $\varepsilon>0$, choose $\delta$ such that 
\begin{equation}
d_H(W,W(\delta))\leq \varepsilon, \quad \hbox{for all $W\in \mathcal U$ and, therefore, $W(\delta)\subset W^\varepsilon$}, 
\end{equation}
where 
\[     
W^\varepsilon=\{x\in \XX: d(x, W) \leq \varepsilon\},
\]
denotes the $\varepsilon-$neighborhood of $W$. Now, let  $V\in \mathcal V(h_n(S_n))$ be the Voronoi cell associated to some $a\in h_n(S_n)$. For $n$ large enough,  Theorem \ref{thm:convergence} guarantees the existence of a $S_\infty \in \mathcal S_{\mathcal X}(k)$ such that $d_H(h_n(S_n), S_\infty)<\delta/2 $. 
Choose $b\in S_\infty$ with $d(a, b)\leq \delta/2$. Thus, if we denote by $W\in \mathcal V(S_\infty)$ the Voronoi cell associated to $b$, invoking  Lemma \ref{thm:cvclusters2b}, can procede following the final steps of the proof of  Theorem \ref{cont_vor} to conclude that 
$V\subset W(\delta)\subset W^\varepsilon $, 
meaning that for any $v\in V$
$d(v, W)\leq \varepsilon,$ which concludes the proof.
\end{proof}

\section{Applications}
\label{sec:applications}

In this section, we apply our consistency results, Theorem \ref{thm:convergence} and Corollary \ref{cor:consistenceclusteringundermGH}, to prove that empirical {\it k-}means and their clusters do converge for some well-known metric learning procedures. We also prove convergence of $k$-means in other contexts with no empirical data involved like first passage percolation and discretizations of continuous length-spaces.

We start by considering the general case in which the space $\XX_n$ is given by an i.i.d. sample.

\subsection{Uniformly consistent distances from i.i.d. samples}
\label{ssec:iid}

The first important application of Theorem~\ref{thm:convergence} concerns the general case of an i.i.d.\ sample $\mathbb{X}_n = \{X_1, \dots, X_n\}$ drawn from a measure $\mu$ on a metric space $(\mathbb{X}, d)$.  
More precisely, assume that $X_i : \Omega \to \mathbb{X}$ is a sequence of i.i.d.\ random variables defined on a probability space $(\Omega, \mathcal{A}, P)$. By considering the empirical measure $\mu_n$, i.e., the uniform distribution on $\mathbb{X}_n$, we obtain a random metric measure space $(\mathbb{X}_n, d_n, \mu_n)$, where the distances $d_n$ can be chosen in various ways. 
In this context, it is natural to consider, for all $n$, the inclusion function $h_n(x) = x$ as a family of candidates for the $\varepsilon_n$-isometries required for mGH convergence. 

In this setting, since the empirical measures converges weakly to the population one, the mGH convergence of $(\mathbb{X}_n, d_n, \mu_n)$ towards $(\mathbb{X}, d, \mu)$ is reduced to verifying that the immersions $h_n\colon \mathbb X_n \to \mathbb X$ give in fact a family of $\varepsilon_n$-isometries with $\varepsilon_n \to 0$. That is the content of the following proposition.

\begin{prop}\label{prop:mGHsample}
Consider $(X_i)_{i\geq 1}$ i.i.d. random elements taking values in the compact space $(\XX,d)$, all distributed according to $\mu$. Then, the  sample metric measure spaces $(\mathbb{X}_n, d_n, \mu_n)$
converge almost surely to $(\mathbb{X}, d, \mu)$ in the measured Gromov-Hausdorff
topology, provided that the following two conditions hold almost surely as $n\to\infty$:
\begin{enumerate}[i)]
 \item $$\underset{x,x' \in \XX_n}{\sup}\, |d_n(x,x') - d(x,x')| \to  0,$$
 \item $$\underset{x \in \XX}{\sup}\,\,\,d(x,\XX_n) \to 0.$$
\end{enumerate}
\end{prop}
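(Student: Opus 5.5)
We will take $h_n\colon \XX_n\to\XX$ to be the inclusion $h_n(x)=x$ and check the definition of mGH convergence directly. That definition asks for two things: numbers $\varepsilon_n\to 0$ such that $h_n$ is an $\varepsilon_n$-isometry, and weak convergence $(h_n)_\#\mu_n\to\mu$. We will work on a single event $\Omega_0\subset\Omega$ of full probability and handle both requirements on it pathwise. Let $\Omega_0$ be the intersection of three almost-sure events: the one where condition i) holds, the one where condition ii) holds, and the one where the empirical measures $\mu_n=\frac1n\sum_{i\le n}\delta_{X_i}$ converge weakly to $\mu$. A countable intersection of almost-sure events is almost sure, so $P(\Omega_0)=1$.

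For the isometry part, fix $\omega\in\Omega_0$ and set
\[
\eta_n:=\sup_{x,x'\in\XX_n}|d_n(x,x')-d(x,x')|+\sup_{x\in\XX}d(x,\XX_n)+\tfrac1n .
\]
The extra term $\tfrac1n$ makes the inequalities strict, as the definition of an $\varepsilon$-isometry requires. By i) and ii), $\eta_n\to0$. Since $h_n$ is the identity, the distortion condition $|d_n(x,x')-d(h_n(x),h_n(x'))|<\eta_n$ is exactly the first supremum plus a positive slack. Since $h_n(\XX_n)=\XX_n$, the covering condition $d(y,h_n(\XX_n))<\eta_n$ for all $y\in\XX$ follows from the second supremum. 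Hence $h_n$ is an $\eta_n$-isometry with $\eta_n\to0$.

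For the measure part, observe that $(h_n)_\#\mu_n$ is the empirical measure $\mu_n$ viewed as a measure on $\XX$. The main obstacle is proving that $\mu_n\to\mu$ weakly almost surely; this is Varadarajan's theorem. Because $\XX$ is compact, hence separable, we can prove it directly. The space $C(\XX)$ is separable, so choose a countable dense family $\{f_j\}$. By the strong law of large numbers, for each $j$ we have $\frac1n\sum_i f_j(X_i)\to\int f_j\,d\mu$ almost surely. Intersecting these countably many events (and folding the result into $\Omega_0$) gives a full-probability event on which the convergence holds for every $f_j$ simultaneously. A uniform approximation argument then extends it to every $f\in C(\XX)$: the $f_j$ are uniformly dense and all measures involved are probability measures, so $\bigl|\int f\,d\mu_n-\int f_j\,d\mu_n\bigr|\le\|f-f_j\|_\infty$ and likewise for $\mu$. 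This is weak convergence.

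Finally, note that measurability of the suprema is not an issue, because conditions i) and ii) are assumed to hold almost surely. On $\Omega_0$ both requirements of mGH convergence are therefore satisfied, with $\varepsilon_n=\eta_n$, and the proposition follows.
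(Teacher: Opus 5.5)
Your proof is correct and follows the same route as the paper: the inclusion maps $h_n(x)=x$ are $\varepsilon_n$-isometries by conditions i) and ii), and $(h_n)_\#\mu_n=\mu_n\to\mu$ weakly by the almost sure weak convergence of empirical measures. You give more detail than the paper does, namely the slack term $\tfrac1n$ that makes the inequalities in the definition of an $\varepsilon$-isometry strict, and a self-contained proof of Varadarajan's theorem via the strong law on a countable dense subset of $C(\XX)$.
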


\begin{proof}
These two conditions imply immediately that $h_n\colon \mathbb X_n \to \mathbb X$, $h_n(x)=x$, are $\varepsilon_n$-isometries with $\varepsilon_n \to 0$. The weak convergence of $(h_n)_\#(\mu_n) \to \mu$ is obtained from the a.e. weak convergence of the empirical measures $\mu_n=n^{-1}\sum_{i=1}^n \delta_{X_i}$ to $\mu$. 
\end{proof}

\begin{rmq}
Note that in the case of a compact Riemannan manifold the second condition is satisfied as long as $\mu$ is absolutely continuous with respect to the volume measure and has full support. As a result, the mGH convergence for the random metric measure spaces induced by an i.i.d.\ sample from a fully supported measure on a compact Riemannian manifold reduces in practice to the uniform convergence of the distances.
\end{rmq}

\subsection{Fermat and Isomap distances}
\label{ssec:fermat}

Isomap \cite{Isomap} and Fermat distances \cite{groisman2022nonhomogeneous, hwang2016, MCD, SGJ} have been introduced as powerful tools to estimate intrinsic distances on nonlinear data structures embedded in high-dimensional spaces.

Isomap (Isometric Mapping) was originally introduced as a nonlinear dimensionality reduction technique that aims to preserve the intrinsic geometric structure by approximating geodesic distances through neighborhood graphs. Unlike traditional linear methods such as PCA, Isomap captures the underlying manifold structure by constructing a graph where each point is connected to its nearest neighbors, with edges weighted by Euclidean distances (or by constructing any other graphs in which each node is connected to points that are close to it). The geodesic distance is then estimated as the shortest length path in this weighted graph. Typically, these distances are then used as input to multidimensional scaling (MDS) for embedding data into lower-dimensional spaces while preserving global geometric relationships.

Building upon the geodesic distance concept, Fermat distances were introduced as a generalization that considers paths minimizing a power-weighted length rather than the traditional shortest path used in Isomap and thus taking into account the intrinsic density of data points. While geodesic distances focus solely on the shortest path along the manifold, Fermat distances introduce a parameter $\alpha \geq 1$ that controls the weighting of path lengths. For $\alpha = 1$, the Fermat distance constructed on a graph of nearest-neighbor points coincides with the Isomap geodesic distance estimator, while for $\alpha > 1$, it favors paths that are not necessary short in length but avoid large jumps between consecutive points of the path, thus favoring high-density regions. This makes Fermat distances particularly suitable for irregular data distributions.

It has been shown empirically that performing $k$-means and other clustering procedures using these data-driven distances significantly outperforms the standard $k$-means algorithm based on Euclidean distances \cite{MCD, SGJ, Chazal, Little, Trillos} and gives results at the state-of-the-art of most successful methods. In particular, \cite{Chazal} demonstrates that, at least at the population level, this approach allows the detection of clusters with arbitrary geometries. In light of this, it is important to understand whether the centers and clusters computed at the empirical level are consistent estimators of their population counterparts.

In this subsection, we study the convergence of empirical barycenters (respectively, {\em k}-barycenters) computed with respect to the empirical Fermat distance, towards the population (respectively, $k$-) barycenters defined with respect to the population Fermat distance. In the same spirit, we study the barycenters computed with Isomap distances.

Let $X_1, \dots, X_n$ be a sample of $n$ i.i.d.\ points with density $f$ with respect to the volume measure on the $\ell$-dimensional manifold $(\M,g)$, i.e. $\int_\M f(x)\,d\mathrm{vol}_g(x) =1, $ where in local coordinate $d\mathrm{vol}_g(x) = \sqrt{\det g}\,dx$ stands for the volume measure on $(\M,g)$.

We may assume that $(\M,g)$ is embedded in an Euclidean space $\R^D$ of large enough dimension.

\subsubsection{Fermat distances}

For $\alpha > 1$, the \emph{empirical Fermat distance} is defined as:
\[
d_{n,\alpha}(x,y) = \inf_\gamma \sum_{i=0}^r |X_{i+1} - X_i|^\alpha,
\]
where $|\cdot |$ denotes Euclidean norm in $\R^D$ and the infimum is taken over all paths $\gamma = (X_0, \dots, X_{r+1})$ with $X_0 = x$, $X_{r+1} = y$.

Note that we are considering any possible path in the complete graph. Isomap distances are constructed similarly but with $\alpha=1$. In that case it is necessary to impose to the paths that two consecutive points on them have to be nearest neighbors (or any other condition that guarantees that they are close to each other in Euclidean distance), see \cite{Isomap}. When $\alpha >1$ (Fermat distances) it is not necessary to impose this restriction since optimal paths prefer to do that automatically \cite{groisman2022nonhomogeneous}. 

For the population version, we consider a probability measure $\mu$ in $(\M,g)$ with density $f$. The {\em population }Fermat distance $d_{f,\alpha}$ associated to $(\M,g, \mu)$ is the Riemannian distance induced by the conformal change of metric $\tilde{g} = f^\kappa\, g$, where $\kappa = (1-\alpha)/\ell.$ In other words, for $x,y\in\M$
\[
d_{f,\alpha}(x,y) = \inf_\gamma \int_I f^{(1-\alpha)/d}(\gamma_t) |\dot \gamma_t| dt.
\]
The infimum is taken over all piecewise smooth curves $\gamma\colon {I=}[0, 1]\to\M$ with $\gamma(0)=x$ and $\gamma(1)=y$.

\noindent When the points $X_i$ are i.i.d. drawn from $f$, the empirical distances converge to the population one in the sense that there exists a constant $\mathsf  c > 0$, depending only on the parameter $\alpha$ and the dimension $\ell$ of $\M$, such that, almost surely,
\begin{equation}\label{eq:cvFermat}
    n^{{(\alpha - 1)}/{\ell}}\, d_{n,\alpha} \underset{n \to \infty}{\longrightarrow} \mathsf  c\, d_{f,\alpha}.
\end{equation}

This convergence has been established pointwise when $\M$ is isometric to the closure of an open subset of some $\mathbb{R}^\ell$ in \cite{groisman2022nonhomogeneous} and strengthened to uniform convergence for closed (compact and boundaryless) smooth manifolds \cite{FBMG-JMLR23}.

We consider $\mathcal S_{\mathcal X_n}(1)$, the set of Fréchet means of $\mathcal X_n=(\mathbb{X}_n, d_{n,\alpha}, \mu_n)$ with respect to the empirical Fermat distance:
\[
    \mathcal S_{\mathcal X_n}(1) = \underset{x \in \mathbb{X}_n}{\argmin} \sum_{i=1}^n d_{n,\alpha}(x, X_i)^p.
\]
Here  $\XX_n= \{X_1, \dots, X_n\}$ and we implicitly take $\mu_n$ to be the uniform measure on $\mathbb{X}_n$.
Similarly, $\mathcal S_{\mathcal X}(1)$ denotes the population Fréchet mean of $\mathcal X$ with respect to the population Fermat distance (i.e., $\mathcal X=(\XX, d_{f,\alpha},\mu)$ with $\mu$ being the measure with density $f$ on $\M$), 
\[
\mathcal S_{\mathcal X}(1) = \underset{x \in \M}{\argmin}\, \int_{\M} d_{f,\alpha}(x, y)^p \mu(dy).
\]
More generally, for any $k \in \mathbb{N}$, we consider the empirical $k$-means $\mathcal S_{\mathcal X_n}(k)$,
\[
\mathcal S_{\mathcal X_n}(k) = \underset{S \in C_k(\mathbb{X}_n)}{\argmin} \sum_{i=1}^n d_{n,\alpha}(S, X_i)^p,
\]
and the population $k$-means,
\[
\mathcal S_{\mathcal X}(k) = \underset{S \in C_k(\M)}{\argmin}\, \int_{\M} d_{f,\alpha}(S, y)^p \mu(dy).
\]

\noindent In view of the above discussion, Theorem~\ref{thm:convergence} applies, yielding the following result.

\begin{prop}
\label{Convergence.Fermat.Barycenters}
Let $\alpha > 1$. Assume that $\M$ is a smooth, closed (i.e., compact and without boundary) $\ell$-dimensional manifold embedded in $\mathbb{R}^D$, and let $f\colon \M \to \mathbb{R}_{>0}$ be a smooth density function. Then, almost surely,
\begin{equation*}
\lim_{n \to \infty}\, \max_{\hat b \in \mathcal S_{\mathcal X_n}(k)}\,\, \min_{b \in \mathcal S_{\mathcal X}(k)}\, d_H(\hat b, b) = 0,
\end{equation*}
where $d_H$ denotes the Hausdorff distance between compact subsets of $\M$ with respect to $d_{f,\alpha}$.
If $\mathcal S_{\mathcal X}(k)$ is a singleton, this convergence can be written as
\begin{equation*}
\lim_{n \to \infty}\, \max_{\hat b \in \mathcal S_{\mathcal X_n}({k})}\, d_H(\hat b, \mathcal S_{\mathcal X}(k)) = 0 \quad \text{a.s.}
\end{equation*}
If, in addition, $\mathcal S_{\mathcal X_n}(k)$ is also a singleton, $\mathcal S_{\mathcal X_n}(k) = \{b_n^k\}$, then
\begin{equation*}
\lim_{n \to \infty} d_H(b_n^k, \mathcal S_{\mathcal X}(k)) = 0 \quad \text{a.s.}
\end{equation*}
{For $k = 1$, we have
\begin{equation*}
\lim_{n \to \infty} d_{f,\alpha}(b_n^1, \mathcal S_{\mathcal X}(1)) = 0 \quad \text{a.s.}
\end{equation*}}
\end{prop}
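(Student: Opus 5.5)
The plan is to deduce Proposition~\ref{Convergence.Fermat.Barycenters} from Theorem~\ref{thm:convergence}, using Proposition~\ref{prop:mGHsample} to obtain the required mGH convergence. The empirical distance $d_{n,\alpha}$ is only asymptotically proportional to $d_{f,\alpha}$ after multiplying by $n^{(\alpha-1)/\ell}$. We therefore work with the rescaled distance $\tilde d_n := \mathsf c^{-1} n^{(\alpha-1)/\ell} d_{n,\alpha}$ on $\XX_n=\{X_1,\dots,X_n\}$.

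Multiplying a metric by a positive constant multiplies $\Phi_{\XX_n}(\cdot,\mu_n)$ by a positive constant. Hence the argmin is unchanged, and $\mathcal S_{\mathcal X_n}(k)$ computed with $d_{n,\alpha}$ coincides with the $k$-means of $(\XX_n,\tilde d_n,\mu_n)$. The target space is $\mathcal X=(\M,d_{f,\alpha},\mu)$. Since $f$ is smooth and positive on the compact manifold $\M$, $f$ is bounded above and below. Thus $d_{f,\alpha}$ is bi-Lipschitz equivalent to the Riemannian distance, and $(\M,d_{f,\alpha})$ is compact.

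Next, I verify the two conditions of Proposition~\ref{prop:mGHsample} almost surely.

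Condition i) is exactly the uniform version of \eqref{eq:cvFermat} for closed smooth manifolds proved in \cite{FBMG-JMLR23}. It gives $\sup_{x,x'\in\XX_n}|\tilde d_n(x,x')-d_{f,\alpha}(x,x')|\to 0$ a.s. This step is the main obstacle: one must check that the cited result is genuinely uniform over pairs of sample points, not merely pointwise for fixed $x,y$. If only pointwise or in-probability statements were available, I would try to upgrade them. The tools would be the monotonicity and triangle inequality of $d_{n,\alpha}$, a finite net of $\M$, and Borel–Cantelli. I will assume the uniform a.s.\ statement as given.

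Condition ii) follows because $f>0$ everywhere. For a finite $\varepsilon$-net $\{z_j\}$ of $(\M,d_{f,\alpha})$, each ball $B(z_j,\varepsilon)$ has positive $\mu$-mass. Almost surely every such ball eventually contains a sample point, and a countable intersection over $\varepsilon=1/m$ then gives $\sup_{x\in\M}d_{f,\alpha}(x,\XX_n)\to 0$ a.s.

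Weak convergence of the empirical measures $\mu_n\to\mu$ holds almost surely by Varadarajan's theorem on the separable space $\M$. Proposition~\ref{prop:mGHsample} then yields $(\XX_n,\tilde d_n,\mu_n)\overset{mGH}{\to}(\M,d_{f,\alpha},\mu)$ a.s., with $h_n$ the inclusion.

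Applying Theorem~\ref{thm:convergence} on this almost sure event gives the first display, since $h_n(\hat b)=\hat b$. The remaining statements are specializations.
\begin{itemize}
\item If $\mathcal S_{\mathcal X}(k)$ is a singleton, the inner minimum is trivial; this is \eqref{conv.uniqueness} of Remark~\ref{rmk:trueconvergence}.
\item If moreover $\mathcal S_{\mathcal X_n}(k)=\{b_n^k\}$, the outer maximum is trivial as well.
\item For $k=1$, the Hausdorff distance between singletons $\{b_n^1\}$ and $\{b\}$ is $d_{f,\alpha}(b_n^1,b)$, which gives the last display.
\end{itemize}
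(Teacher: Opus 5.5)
Your proposal is correct and follows the same route as the paper: the almost sure uniform convergence of the rescaled empirical Fermat distances from \cite{FBMG-JMLR23}, together with condition ii) and the almost sure weak convergence of the empirical measures, gives mGH convergence via Proposition~\ref{prop:mGHsample}, and then Theorem~\ref{thm:convergence} and Remark~\ref{rmk:trueconvergence} yield all four displays. Your remark that the $n$-dependent factor $\mathsf c^{-1}n^{(\alpha-1)/\ell}$ leaves the $k$-means unchanged is a step the paper leaves implicit, and it is what lets you replace $d_{n,\alpha}$ by a metric that converges to $d_{f,\alpha}$ itself.
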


\begin{rmq}
Proposition \ref{Convergence.Fermat.Barycenters} states that the barycenter computed with the empirical measure converges almost surely to the population barycenter in $d_{f,\alpha}$ distance. Since $f$ is bounded,  
$\lim_{n \to \infty} d_{f,\alpha}(b_n, \beta) = 0$ implies $\lim_{n \to \infty} d_{\M}(b_n, \beta) = 0$. Here $d_\M$ is the geodesic distance on the manifold. Hence we also have convergence in this sense.   
\end{rmq} 

\begin{rmq}
Proposition \ref{Convergence.Fermat.Barycenters} states the consistency of the centroid calculated using the {\it k-}mean procedure under a metric learned from the Fermat distance, by applying Theorem \ref{thm:convergence}. 
Note that, similarly, Corollary \ref{cor:consistenceclusteringundermGH} also applies and gives the consistency of clusters  calculated using the $k$-mean procedure under a metric learned from the Fermat distance.
\end{rmq}

\subsubsection{Isomap distances} A similar result holds for the geodesic distance estimator involved in the Isomap algorithm \cite{isomapproofs} since, as with the Fermat distances, these estimators converge uniformly. 

We will focus on the $\varepsilon$-graph in which two points $X_i$ and $X_j$ are adjacent if and only if $|X_i-X_j| \le \varepsilon$. In this graph we assign the weight $w_{ij}=|X_i-X_j|$ to the edge $\{X_i, X_j\}$ when $X_i \sim X_j$. 

Given $\varepsilon>0$, the \textit{Isomap distance} is defined as the $\varepsilon$-graph distance between $X_i$ and $X_j$, i.e.
\[
d_{\XX_n,\varepsilon}(x,y) = \inf_\gamma \sum_{i=0}^r |X_{i+1}- X_i|,
\]
where the infimum is taken over all paths $\gamma=(X_0, \dots, X_{r+1})$ with $X_0=x$, $X_{r+1}=y$, $X_i\in \mathbb X_n$ for every $1\le i \le r$ and $|X_{i+1}-X_i|\le \varepsilon$ for every $0\le i \le r+1$.

There is an alternative version of the Isomap distance in which instead of considering the $\varepsilon$-graph, the $k$-nearest neighbors graph ($k$-NN) is considered. In this graph there is an edge between $X_i$ and $X_j$ if one of them is among the $k$ nearest points of the other with respect to the Euclidean distance. The following theorem can be deduced from \cite{isomapproofs}. See also \cite[Theorem 1, Theorem 9]{Alamgir-vonLuxburg}  

\begin{thm}
\label{thm:isomap}
Let $(X_i)_{i=1,\ldots,n}$ be i.i.d. with (smooth) density $f\colon \M \to \R_{>0}$ with respect to volume measure on $\mathcal M$. Assume $\varepsilon_n \to 0$ and 
$n \varepsilon_n^d/\log n \to \infty$. Then,
\[
\lim_{n\to \infty} \sup_{x,y}\left| d_{\XX_n, \varepsilon_n}(x,y) - d_{\mathcal M}(x,y )\right|=0,\quad\hbox{almost surely}. 
\]
Here, $d_{\M}$ denotes the inherited geodesic distance.
\end{thm}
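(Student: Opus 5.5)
We prove Theorem~\ref{thm:isomap} by the classical sandwich argument of \cite{isomapproofs}. We show separately that $d_{\XX_n,\varepsilon_n}\geq d_\M - o(1)$ and $d_{\XX_n,\varepsilon_n}\le d_\M+o(1)$, uniformly in $x,y\in\XX_n$. Throughout, $d$ in the hypothesis $n\varepsilon_n^d/\log n\to\infty$ is read as the intrinsic dimension $\ell$ of $\M$.

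\textbf{Lower bound.} For a closed smooth embedded manifold there is a reach/curvature constant $C_\M$ such that, for $|x-y|\le \varepsilon$ small, $d_\M(x,y)\le (1+C_\M\varepsilon^2)|x-y|$. Take any admissible path $\gamma=(X_0,\dots,X_{r+1})$ in the $\varepsilon_n$-graph. By the triangle inequality for $d_\M$,
\[
d_\M(x,y)\le \sum_i d_\M(X_i,X_{i+1})\le (1+C_\M\varepsilon_n^2)\sum_i|X_{i+1}-X_i|.
\]
Taking the infimum over paths gives $d_\M\le(1+C_\M\varepsilon_n^2)\,d_{\XX_n,\varepsilon_n}$. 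This step is deterministic. If $x,y$ are not connected, the graph distance is $+\infty$, so this case is excluded by the upper bound below.

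\textbf{Upper bound.} First establish a covering event. Let $\delta_n=\varepsilon_n/4$ (any fixed fraction works). With probability one, for all large $n$, every point of $\M$ lies within Euclidean distance $\delta_n$ of $\XX_n$.

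To prove this, cover $\M$ by $N_n\lesssim \delta_n^{-\ell}$ geodesic balls of radius $\delta_n/2$. Since $f$ is bounded below on compact $\M$, each ball has $\mu$-mass at least $c\,\delta_n^{\ell}$. Hence
\[
P(\text{some ball is empty})\le N_n(1-c\delta_n^\ell)^n\le C\delta_n^{-\ell}e^{-cn\delta_n^\ell}.
\]
The assumption $n\varepsilon_n^\ell/\log n\to\infty$ makes this bound summable, and Borel–Cantelli gives the almost sure statement.

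On this event, fix $x,y\in\XX_n$ and a minimizing geodesic $\sigma$ of length $L=d_\M(x,y)$. Place points $x=p_0,p_1,\dots,p_m=y$ along $\sigma$ with spacing $\varepsilon_n/2$. Pick $X_{(j)}\in\XX_n$ with $|X_{(j)}-p_j|\le\delta_n$, taking the endpoints to be $x$ and $y$ themselves. Consecutive sample points are then within $\varepsilon_n/2+2\delta_n=\varepsilon_n$, so the path is admissible. Its length is at most $L+2m\delta_n$, which is of order $L(1+4\delta_n/\varepsilon_n)$, and this is not $o(1)$ with a fixed fraction.

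To fix this, take $\delta_n=\eta_n\varepsilon_n$ with $\eta_n\to0$ slowly enough that $n(\eta_n\varepsilon_n)^\ell/\log n\to\infty$ still holds. This is possible because $n\varepsilon_n^\ell/\log n\to\infty$. Use spacing $\varepsilon_n(1-2\eta_n)$. The path length is then at most $L(1+O(\eta_n))+O(\varepsilon_n)$. Since $L\le\mathrm{diam}(\M)$, we get $\sup_{x,y}(d_{\XX_n,\varepsilon_n}-d_\M)\to0$ almost surely.

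\textbf{Main obstacle.} The delicate step is the upper bound, in two respects. First, the accumulated error from snapping the geodesic points to sample points must be made vanishing relative to $L$, which requires choosing $\eta_n$ as above. Second, the covering estimate must be uniform, both over all pairs $(x,y)$ and almost surely, which needs summability in Borel–Cantelli. Both are handled by the $\log n$ slack in the hypothesis, exactly as in \cite{isomapproofs}. The lower bound only needs the reach-type comparison between chord length and geodesic length at small scales, which holds because $\M$ is compact and smooth.
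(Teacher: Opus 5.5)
Your proof is correct and follows the route the paper relies on. The paper gives no argument of its own and simply defers to the sandwich argument of \cite{isomapproofs}, which is what you reconstruct. Your bounds use only compactness, smoothness and positive reach of $\M$, consistent with the paper's remark that the extra assumptions in \cite{isomapproofs} are unnecessary. Reading $d$ as $\ell$ is also harmless: it is the weaker hypothesis, so your argument covers the statement whichever dimension is meant.
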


Note that in \cite{isomapproofs}, some additional assumptions on $\mathcal M$ are made, but they are not necessary for the convergence of the estimator $d_{\XX_n, \varepsilon_n}$ to $d_{\M}$. 

In view of this, Proposition \ref{Convergence.Fermat.Barycenters} holds also when Fermat distances are replaced with Isomap distances in the regime given in Theorem \ref{thm:isomap}.

\subsubsection{A uniqueness criterion for Fréchet means under the Fermat distance}
\label{NPC}

As shown in \cite{Cuevas}, the uniqueness of the set of $k$-means is a strong indicator of an appropriate choice of $k$. Indeed, if several distinct sets of $k$-means exist, this strongly suggests that $k$ does not correspond to the true number of clusters in the data. Furthermore, uniqueness is closely related to the algorithm’s stability (see \cite{von2010clustering}). In light of this, it is important to understand the conditions under which uniqueness holds.
In this section, we take a first step toward a theoretical understanding of the uniqueness issue within our metric learning framework. We introduce a criterion, based on the non-positive curvature of the Fermat metric, that guarantees the uniqueness of the Fréchet mean (\textit{i.e.}, the case $k=1$ and $p=2$ in the notation of Section \ref{sec:continuitykmean}) computed with respect to this metric. In other words, when this criterion is satisfied, it indicates that the data are not partitioned into several groups, but rather form a single cluster.

A metric space $(X,d)$ with non-positive curvature is one in which triangles are thinner than their Euclidean counterparts. This simple comparison property implies that, in such spaces, every probability distribution with a finite first moment admits a unique Fréchet mean, just as in the Euclidean setting, which serves as the comparison model. One of the first works linking the uniqueness of the Fréchet mean and non-positive curvature is \cite{Kendall1990}. This was later refined in \cite{sturm2003probability} and \cite{Bacak2014}. Note also that such a theory has been developed for spaces with curvature bounded above, even by some positive constant, in which case an additional assumption of small support is needed to guarantee uniqueness; see \cite{Bhattacharya}. More recent works have further shown that in these spaces, standard algorithms for computing Fréchet means also perform well. For instance, the empirical barycenter of a sample of a bounded random variable is sub-Gaussian, thus extending Hoeffding’s inequality to this nonlinear context (see \cite{brunel2024concentration} for non-positively curved spaces, and \cite{brunel2025finite} for the general case of spaces with curvature bounded above).
In this section, we provide a condition ensuring that a Fermat metric space is non-positively curved, and therefore enjoys barycentric properties as favorable as those of the Euclidean space.

Recall from the previous section that $\alpha>1$, and the Fermat metric is equal to the conformal change of metric $\tilde{g}=f^\kappa\,g$, with $\kappa=(1-\alpha)/l<0$, $f:\M\to\R_+$ the probability density of the model, and $l$ the intrinsic dimension of $\M$.

\begin{thm}\label{thm:uniqueness}
Assume that $\M$ is simply connected, and that $X$ admits a finite first moment with respect to the Fermat distance, i.e. $\E\,d_{f,\alpha}(X,x_0) <\infty$ for some $x_0\in \M$.
Assume also that for all vectors $u,v$ orthonormal with respect to the metric $g$, the following inequality
\[
\frac{\kappa}{2f}\left( \nabla^2f(u,u) + \nabla^2 f(v,v) \right) + \frac{\kappa^{2}}{4f^{2}}|\nabla f|^2 - \frac{\kappa}{2f^{2}}\left( \nabla f \cdot u \right)^2 \left( 1+\frac{\kappa}{2}\right) - \frac{\kappa}{2f^{2}}\left( \nabla f \cdot v \right)^2 \left( 1+\frac{\kappa}{2}\right) \geq \mathsf K(u,v)
\]

is satisfied by the density $f$ and the sectional curvatures $\mathsf K$ of the manifold $(\M,g)$. Then the random variable $X$, seen as taking values in $(\M,\tilde{g})$ equipped with the Fermat metric $\tilde{g}$, admits one and only one Fréchet mean.
\end{thm}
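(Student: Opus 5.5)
The approach is to show that $(\M,\tilde g)$ is a Hadamard manifold, i.e.\ complete, simply connected, with nonpositive sectional curvature. Then we invoke the classical theory of barycenters in CAT(0) (NPC) spaces: Sturm \cite{sturm2003probability}, see also \cite{Bacak2014}, shows that in a complete CAT(0) space every probability measure with finite first moment has a unique Fréchet mean. For $p=2$ this mean is defined through the Sturm normalization $\int (d^2(z,y)-d^2(x_0,y))\,\mu(dy)$, which is finite under a first-moment assumption.

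The core computation is the sectional curvature of the conformal metric $\tilde g=e^{2\varphi}g$ with $\varphi=\frac{\kappa}{2}\log f$. For $g$-orthonormal $u,v$, the classical formula gives
\[
e^{2\varphi}\tilde{\mathsf K}(u,v)=\mathsf K(u,v)-\nabla^2\varphi(u,u)-\nabla^2\varphi(v,v)+(\nabla\varphi\cdot u)^2+(\nabla\varphi\cdot v)^2-|\nabla\varphi|^2 .
\]
I would then substitute
\[
\nabla\varphi=\frac{\kappa}{2f}\nabla f,\qquad \nabla^2\varphi=\frac{\kappa}{2f}\nabla^2 f-\frac{\kappa}{2f^2}\,\nabla f\otimes\nabla f,
\]
and collect terms. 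The $(\nabla f\cdot u)^2$ terms combine as $\frac{\kappa}{2f^2}+\frac{\kappa^2}{4f^2}=\frac{\kappa}{2f^2}\bigl(1+\frac{\kappa}{2}\bigr)$, and similarly for $v$. The gradient term becomes $-\frac{\kappa^2}{4f^2}|\nabla f|^2$. This yields exactly
\[
e^{2\varphi}\tilde{\mathsf K}(u,v)=\mathsf K(u,v)-\Bigl[\tfrac{\kappa}{2f}(\nabla^2 f(u,u)+\nabla^2 f(v,v))+\tfrac{\kappa^2}{4f^2}|\nabla f|^2-\tfrac{\kappa}{2f^2}(1+\tfrac{\kappa}{2})((\nabla f\cdot u)^2+(\nabla f\cdot v)^2)\Bigr].
\]
The hypothesis is precisely that the bracket is $\ge \mathsf K(u,v)$, so $\tilde{\mathsf K}\le0$ for every $\tilde g$-plane. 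Since the planes are the same for both metrics and $g$-orthonormal bases can be used, this covers all sectional curvatures.

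Next comes completeness. If $\M$ is closed as in the previous subsection, it is compact and hence complete, and since $f$ is smooth and positive the two metrics are bi-Lipschitz equivalent. In general I would assume that $(\M,\tilde g)$ is complete (or deduce it from $f$ being bounded above, since $\kappa<0$ makes $f^\kappa$ bounded below so that $\tilde g$-lengths dominate $g$-lengths) and state this explicitly. By Cartan--Hadamard, completeness, simple connectivity and $\tilde{\mathsf K}\le 0$ make $(\M,d_{f,\alpha})$ a Hadamard manifold, hence a complete CAT(0) geodesic space. The finite first moment in $d_{f,\alpha}$ then gives, by \cite{sturm2003probability}, existence via strict uniform convexity of $z\mapsto d^2(z,y)$ along geodesics plus completeness, and uniqueness via strict convexity of the Fréchet functional.

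The main obstacle is the curvature bookkeeping: keeping the signs and the $1+\kappa/2$ factor consistent with the chosen convention for conformal change and Hessians. I would double-check it against the known case $\mathbb{R}^2$ with $\tilde g=e^{2\varphi}g$, where $\tilde{\mathsf K}=-e^{-2\varphi}\Delta\varphi$. A secondary subtlety is the completeness issue when $\M$ is noncompact, which should be addressed by an explicit assumption.
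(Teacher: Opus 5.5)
Your proposal is correct and follows the paper's proof. You use the conformal-change formula with $\psi=\frac{\kappa}{2}\log f$, which yields exactly the paper's expression for $\tilde{\mathsf K}$, the hypothesis makes it nonpositive, and uniqueness then comes from barycenter theory on Hadamard spaces (the paper cites Bhattacharya--Patrangenaru rather than Sturm).

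Your explicit treatment of completeness, and of Sturm's normalized functional under a mere first-moment assumption, fills points the paper leaves implicit. Note, however, that your ``closed $\M$'' case is vacuous: by Cartan--Hadamard, no closed simply connected manifold of dimension $\ge 2$ carries a metric with $\tilde{\mathsf K}\le 0$, so completeness really has to come from something like $f$ bounded above on a complete $(\M,g)$.
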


Note that a priori the variable $X$ can have more than one Fréchet mean with respect to the original metric $g$, but uniqueness appears in the Fermat metric.

\begin{proof}
Let us first show that under the assumption of the theorem, the Fermat manifold $(\M,\tilde{g})$ is Hadamard, i.e. it has non-positive sectional curvatures. To this aim, we will compute the sectional curvature $\tilde{\mathsf K}$ of the Fermat manifold by using the usual formula for a conformal change of metric. Recall that if $(\M,g)$ is a Riemannian manifold, $\psi:\M\to\R$ a $C^2$ function, $\tilde{g}=e^{2\psi}g$ a conformal change of metric, $\mathsf K$ the sectional curvature of $g$, and $\tilde{\mathsf K}$ the sectional curvature of $\tilde{g}$, then for all vectors $u,v$ orthonormal with respect to $g$, it holds (see e.g. \cite{doCarmo})
\begin{equation}\label{eq:conformalsect}
e^{2\psi}\tilde{\mathsf K}(u,v) = \mathsf K(u,v) -\nabla^2\psi(u,u) -\nabla^2\psi(v,v) - g(\nabla\psi,\nabla\psi) + (u(\psi))^2 + (v(\psi))^2
\end{equation}
Note that the gradient and the hessian are computed in the metric $g$, and $u(\psi) = g(\nabla \psi, u)$.
Now, we apply Formula \eqref{eq:conformalsect} with $\psi = \frac{\kappa}{2}\log(f)$. So $\nabla \psi = \frac{\kappa}{2f}\nabla f$ and $\nabla^2 \psi = \frac{\kappa}{2f}\nabla^2 f -\frac{\kappa}{2f^2}\nabla f\otimes \nabla f$. Hence, denoting $\tilde{\mathsf K}$ for the sectional curvature of $(\M,\tilde{g})$, we get
\begin{align*}
\tilde{\mathsf K}(u,v) &= e^{-2\psi} \left\{\mathsf K(u,v) -\nabla^2\psi(u,u) -\nabla^2\psi(v,v) - g(\nabla\psi,\nabla\psi) + (u(\psi))^2 + (v(\psi))^2 \right\}\\
&= f^{-\kappa} \left\{ \mathsf K(u,v) - \frac{\kappa}{2f}\nabla^2 f(u,u) +\frac{\kappa}{2f^2}(\nabla f\otimes \nabla f)(u,u)  - \frac{\kappa}{2f}\nabla^2 f(v,v) +\frac{\kappa}{2f^2}(\nabla f\otimes \nabla f)(v,v)\right.\\
&- \left. \frac{\kappa^2}{4f^2}|\nabla f|^2 + \frac{\kappa^2}{4f^2} g(\nabla f,u)^2 + \frac{\kappa^2}{4f^2} g(\nabla f,v)^2 \right\} \\
& = f^{-\kappa} \mathsf K(u,v) - \frac{\kappa}{2f^{\kappa+1}}\nabla^2 f(u,u) - \frac{\kappa}{2f^{\kappa+1}}\nabla^2 f(v,v) - \frac{\kappa^2}{4f^{\kappa+2}}|\nabla f|^2\\
&+\frac{\kappa}{2 f^{2+\kappa}} g(\nabla f, u)^2 \left(1+\frac{\kappa}{2} \right) + \frac{\kappa}{2 f^{2+\kappa}} g(\nabla f, v)^2 \left(1+\frac{\kappa}{2} \right)
\end{align*}
where we used that $(\nabla f\otimes \nabla f)(v,v) = g(\nabla f, v)^2$. Then we easily deduce that under the assumption of the theorem, the Fermat manifold is Hadamard, that is $\tilde{\mathsf K}\leq 0$. Then it is a famous result that every probability measures admitting a finite first moment on a non-positively curved, simply connected manifold admits a unique Fréchet mean, see \cite[Theorem 2.1]{Bhattacharya}. The proof is complete.
\end{proof}

As a corollary, one can deduce the uniqueness of the Fréchet mean in the Fermat distance associated to any log-concave distribution in $\R^\ell$. In particular, uniqueness holds for the Gaussian distribution.

\begin{corol}\label{cor:uniqlogconcave}
Let $(\M,g)=(\R^\ell,|\cdot|)$ be the Euclidean space, $X$ be a random variable with a log-concave density $f$ with respect to the Lebesgue measure, and let $d_{f,\alpha}$ be the associated Fermat distance for some $\alpha>1$. Then $X$ admits a unique Fréchet mean for the Fermat distance, as soon as it admits a finite first moment $\E \left[d_{f,\alpha}(X,x_0) \right] < \infty$, for some $x_0\in\R^\ell$.
\end{corol}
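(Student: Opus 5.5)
We will deduce Corollary \ref{cor:uniqlogconcave} directly from Theorem \ref{thm:uniqueness}, taking $\M=\R^\ell$ with the flat metric. The space $\R^\ell$ is simply connected, and the first-moment hypothesis is assumed. So the only thing to check is the curvature inequality. Since $g$ is flat, $\mathsf K(u,v)=0$ for every orthonormal pair $u,v$. It therefore suffices to show that the left-hand side
\[
L(u,v):=\frac{\kappa}{2f}\left( \nabla^2f(u,u) + \nabla^2 f(v,v) \right) + \frac{\kappa^{2}}{4f^{2}}|\nabla f|^2 - \frac{\kappa}{2f^{2}}\left(1+\frac{\kappa}{2}\right)\left[ (\nabla f \cdot u)^2 + (\nabla f\cdot v)^2\right]
\]
is nonnegative whenever $f$ is log-concave. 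We work with $f$ smooth and positive, as implicitly required for the conformal metric $\tilde g=f^\kappa g$ in Theorem \ref{thm:uniqueness}.

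The natural step is to rewrite $L$ in terms of $\phi=\log f$, which is concave. We have $\nabla f=f\nabla\phi$ and $\nabla^2 f=f(\nabla^2\phi+\nabla\phi\otimes\nabla\phi)$. Substituting and writing $a=\nabla\phi\cdot u$, $b=\nabla\phi\cdot v$ gives
\[
L=\frac{\kappa}{2}\left(\nabla^2\phi(u,u)+\nabla^2\phi(v,v)\right)+\frac{\kappa}{2}(a^2+b^2)+\frac{\kappa^2}{4}|\nabla\phi|^2-\frac{\kappa}{2}\left(1+\frac{\kappa}{2}\right)(a^2+b^2).
\]
The $a^2+b^2$ terms combine to $-\frac{\kappa^2}{4}(a^2+b^2)$, so
\[
L=\frac{\kappa}{2}\left(\nabla^2\phi(u,u)+\nabla^2\phi(v,v)\right)+\frac{\kappa^2}{4}\left(|\nabla\phi|^2-a^2-b^2\right).
\]

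We now check that both terms are nonnegative. Concavity of $\phi$ gives $\nabla^2\phi\le 0$, and $\kappa=(1-\alpha)/\ell<0$ because $\alpha>1$, so the first term is $\ge 0$. For the second term, $u,v$ are orthonormal, so Bessel's inequality gives $a^2+b^2\le|\nabla\phi|^2$, and the second term is $\ge 0$ as well. Hence $L\ge 0=\mathsf K(u,v)$, and Theorem \ref{thm:uniqueness} yields a unique Fréchet mean.

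The main obstacle is regularity. Log-concave densities need not be $C^2$ or everywhere positive; the density may vanish outside a convex set, in which case the Fermat metric degenerates. For nonsmooth but everywhere positive $f$, the first thing to try is to read the statement with $f$ smooth and positive, as the surrounding theory does. Failing that, one can approximate $\phi$ by smooth concave functions (for example by convolution) and pass the $\mathrm{CAT}(0)$ property to the limit, since it is stable under uniform limits of length metrics. For densities supported on a proper convex set, this approximation argument does not directly apply, and the result should be stated for positive $f$ only.
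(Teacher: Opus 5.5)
Your proposal is correct and follows the paper's proof: apply Theorem \ref{thm:uniqueness} with $\mathsf K=0$, and check the curvature inequality using $\kappa<0$, the concavity of $\log f$, and the Bessel (Cauchy--Schwarz) bound $\langle\nabla\log f,u\rangle^2+\langle\nabla\log f,v\rangle^2\le|\nabla\log f|^2$. Your rewriting of the left-hand side as $\frac{\kappa}{2}\bigl(\nabla^2\phi(u,u)+\nabla^2\phi(v,v)\bigr)+\frac{\kappa^2}{4}\bigl(|\nabla\phi|^2-a^2-b^2\bigr)$ with $\phi=\log f$ is exactly the computation the paper leaves implicit, and your assumption that $f$ is smooth and positive is the same one the paper makes implicitly.
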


\begin{proof}
The result follows from a direct application of Theorem \ref{thm:uniqueness}, by noting that for orthonormal vectors $u,v$, the Cauchy-Schwarz inequality gives $\langle \nabla \log f, u\rangle^2 + \langle \nabla \log f, v \rangle^2 \leq |\nabla \log f|^2$. 
\end{proof}
Let us conclude this section by an illustration of Corollary \ref{cor:uniqlogconcave} on the Gaussian distribution. To simplify, we do the computation in dimension one. Let $f(x)=(2\pi)^{-1/2} e^{-x^2/2}$ be the Gaussian density. The conformal Fermat metric is then given by $\tilde{g} = f^\kappa = (2\pi)^{-\kappa/2} e^{-\kappa x^2/2} $, $\kappa = (1-\alpha)/\ell = 1-\alpha$, since we are in dimension $l=1$, and the Fermat distance between $x,y$ is given by
\[
d_{f,\kappa}(x,y) = \int_0^1 \sqrt{\tilde{g}\left(\dot{\gamma}(t),\dot{\gamma}(t)\right)} dt = (2\pi)^{-\kappa/4}|y-x| \F(x,y)
\]
where $\gamma(t) = (1-t)x + ty$ and $$ \F(x,y) = \int_0^1 e^{-\frac{\kappa}{4}((1-t)x+ty)^2} dt.$$

So up to constant, we have that if $Z$ is a standard normal distribution,
\[
\E\, d_{f,\kappa}(0,Z) = \int_{\R} (2\pi)^{-\kappa/4}|y|\F(0,y) (2\pi)^{-1/2}e^{-y^2/2} dy \sim \int_\R |y|\int_0^1 e^{-\frac{\kappa}{4}y^2t^2} dt\,e^{-\frac{y^2}{2}}dy
\]
And an elementary study gives that this integral is finite if, and only if, $\alpha\in (-1,3)$.
Therefore we conclude that the normal variable $Z$ admits a finite first moment in its own Fermat distance if, and only if the parameter $\alpha\in[1,3)$, whence from Corollary \ref{cor:uniqlogconcave} it admits a unique Fréchet mean on this range. Note that on the range $\alpha\geq 3$, it is not integrable, and hence it does not admit a Fréchet mean.

\subsection{Diffusion distances}\label{sec:spectraldistances}

Among nonlinear spectral dimensionality reduction techniques, \emph{Laplacian eigenmaps} and \emph{diffusion maps}, introduced respectively in \cite{belkin2003laplacian, coifman2006diffusion}, are perhaps the most well-known methods. Both rely on the idea of embedding the data into a lower-dimensional Euclidean space  using spectral coordinates derived from a discrete Laplace operator that encodes pairwise similarities between data points.
Given a dataset $\XX_n = \{X_1, \ldots, X_n\} \subset \XX \subset \R^{\ell}$, one first defines a similarity matrix $(\eta_{ij})_{ij}$, where each entry $\eta_{ij} = \eta(X_i, X_j)$ is computed from a positive, continuous, and symmetric kernel \mbox{$\eta : \XX \times \XX \to \R_+$}. In practice, this kernel is often taken to be Gaussian,
\begin{equation}\label{def:Gaussiankernel}
\eta(x,y) = \exp \left(-\frac{|x - y|^2}{2\sigma^2}\right).
\end{equation}
The \emph{degree matrix} $D$ is then defined as the diagonal matrix with entries $d_i = \sum_{j=1}^n \eta_{ij}$. From these definitions, one constructs the \emph{normalized graph Laplacian}
\begin{equation}\label{def:graphLaplace}
L = I - D^{-1/2} \eta D^{-1/2}
\end{equation}
where $I$ denotes the $n \times n$ identity matrix and by abuse of notation we used $\eta$ also for the matrix with entries $\eta_{ij}$. This defines a quadratic form on $\R^{n}$, given for any $Y=(y_1, \ldots, y_n) \in \R^n$ by

\[
Y^T L Y = \frac{1}{2} \sum_{i,j=1}^n \eta_{ij} \left( \frac{y_i}{\sqrt{d_i}} - \frac{y_j}{\sqrt{d_j}} \right)^2.
\]
Next, it is straightforward to verify, using the normalization by $\sqrt{d_i}$, that the spectrum of $L$ is contained in $[0,1]$. One can then compute the first $k$ eigenvectors $u_1, \ldots, u_k \in \R^n$ corresponding to the $k$ smallest eigenvalues
\[
0 = \lambda_1 \leq \cdots \leq \lambda_k \leq 1 
\]
of the eigenproblem
\[
L u = \lambda u.
\]
The idea is then to use these eigenvectors
\begin{align*}
u_1 &= (u_1^1, \ldots, u_1^n)\in\R^n\\
&\vdots\\
u_k &= (u_k^1, \ldots, u_k^n)\in\R^n
\end{align*}
to embed the dataset $\XX_n$ into $\R^k$. The Laplacian eigenmaps and diffusion maps methods differ in how they define the embedding. The Laplacian eigenmaps approach uses the eigenvectors directly, via the map
\begin{align*}
\varphi \colon \XX_n &\to \R^k\\
X_i &\mapsto (u_1^i, \ldots, u_k^i).
\end{align*}
In contrast, diffusion maps introduce a \emph{diffusion parameter} $t>0$, defining the embedding
\begin{align*}
\varphi_t : \XX_n &\to \R^k\\
X_i &\mapsto ((1-\lambda_1)^t\, u_1^i, \ldots, (1-\lambda_k)^t\, u_k^i).
\end{align*}
Note that $\varphi_t$ converges to $\varphi$ as $t \to 0$. 
The standard spectral clustering procedure consists of applying the $k$-means algorithm to the embedded points $ \varphi(X_i) \in \R^k$, which yields $k$ clusters $A_1, \ldots, A_k \subset \R^k$.  
One may also work with the diffusion map $\varphi_t$ for any $t>0$, in which case the embedded points are $ \varphi_t(X_i)$ and the $k$-means algorithm produces clusters
\[
A_{1,t}, \ldots, A_{k,t} \subset \R^k.
\]
The corresponding clusters in the original dataset are then given by
\[
\varphi_t^{-1}(A_{1,t}), \ \ldots,\ \varphi_t^{-1}(A_{k,t}) \subset \XX_n.
\]
Since $k$-means is performed with respect to the Euclidean distance in $\R^k$, it is natural, if one wishes to perform clustering directly on the dataset $\XX_n$, to consider the pull-back of the Euclidean distance via $\varphi_t$, defined by
\begin{align*}
d_t : \XX_n \times \XX_n &\to \R_+\\
(X_i, X_j) &\mapsto \left\| \varphi_t(X_i) - \varphi_t(X_j) \right\|_2.
\end{align*}
Note that this only defines a pseudo-distance, since $\varphi_t$ is usually non-injective, implying that it is possible to have $d_t(x,y)=0$ but $x\neq y$. This pseudo-distance $d_t$ is however referred to as the \emph{truncated diffusion distance}, and depends on $n$. We will omit this dependence in the notation for simplicity. It satisfies the explicit formula
\begin{equation}\label{def:diffusiondistance}
d_t(X_i, X_j)^2 = \sum_{l=1}^k (1-\lambda_l)^{2 t}\, (u_l^i - u_l^j)^2.
\end{equation}
The true (non truncated) diffusion distance corresponds to the infinite sum over all eigenvalues, and it is really a distance function. 

{The reader should keep in mind that in order to be fully rigorous, we have to take the quotient $\XX_n/\sim$ where data points having the same image by the diffusion map $\varphi_t$ are identified, in order for the truncated distance to be a distance function. Since this would only complicate notations without make things clearer, in the following we do not mention it anymore.}

Unlike Isomap or Fermat distances, which compute a (weighted) $\varepsilon$-graph geodesic distance by \emph{minimizing} the total weight among all paths connecting two points, the diffusion distance $d_t$ measures the discrepancy between the distribution at time $t$ of two {random walks} running on sample points $\XX_n$ but started at $X_i$ and $X_j$ respectively. The transitions are proportional to $\eta_{ij}$.

By construction, the diffusion map $\varphi_t$ provides (after taking quotient) an isometry between $(\XX_n,d_t)$ and $(\varphi_t(\XX_n),|\cdot |) \subset (\R^k,|\cdot | )$ that preserves the uniform measure and one can expect performing $k$-means in both spaces to be equivalent. Since standard spectral clustering applies the $k$-means algorithm to the embedded points $\varphi_t(X_i)$ using the Euclidean distance in the ambient space $\R^k$, this approach differs conceptually slightly from performing $k$-means in $\mathcal{X}_n = \left( \XX_n,\, d_t,\, \mu_n \right)$. 

Consistency of spectral clustering has been widely studied. We return to this point by the end of the subsection.

With this perspective, we can recover the consistency of the $k$-means clustering procedure on $\mathcal{X}_n$ using Theorem~\ref{thm:convergence} once we prove that the sequence of random metric measure spaces $\mathcal{X}_n$ converges almost surely in the measured Gromov–Hausdorff topology to some limiting metric measure space $\mathcal{X}$.

For a fixed $t \geq 0$, suppose that $\XX_n$ is an i.i.d.\ sample drawn from a probability measure $\mu$ supported on a closed manifold $\mathcal{M} \subset \R^{\mathsf D}$.

According to Section~\ref{ssec:iid}, almost sure convergence in the measured Gromov–Hausdorff topology will hold provided that the diffusion distance $d_t$ converges almost surely and uniformly to some distance $\delta_t$ on $\mathcal{M}$ as $n \to \infty$.
In other words, we seek to show the existence of a distance $\delta_t$ on $\mathcal{M}$ such that
\[
\text{almost surely,} \quad 
\sup_{i,j} \left| d_t(X_i, X_j) - \delta_t(X_i, X_j) \right|
\underset{n \to \infty}{\longrightarrow} 0.
\]
From \eqref{def:diffusiondistance}, this convergence will occur if, simultaneously, the first $k$ eigenvalues $\lambda_1 \leq \cdots \leq \lambda_k$ of $L$ converge, that is,
\[
\forall i = 1, \ldots, k, \quad 
\lambda_i \underset{n \to \infty}{\longrightarrow} \sigma_i,
\]
for some limiting sequence $\sigma_1 \leq \cdots \leq \sigma_k$, and if the associated eigenvectors $u_1, \ldots, u_k$ also converge. More precisely, for each $i = 1, \ldots, k$, there should exist a function $f_i : \mathcal{M} \to \R$ such that
\[
\text{almost surely,} \quad 
\sup_{j = 1, \ldots, n} \big| u_i^j - f_i(X_j) \big| 
\underset{n \to \infty}{\longrightarrow} 0.
\]
It turns out that both types of convergence have been established in the literature under mild assumptions, and we recall below a result from \cite{von2008consistency} that is sufficient for our purposes. 
A broader discussion of the literature on the convergence of graph Laplacians will be provided afterwards.

\begin{thm}[{\cite[Theorem 15]{von2008consistency}}]\label{thm:spectralconvergenceliterature}
Let $\M$ be a compact manifold,
Consider $(X_i)_{i\geq 1}$ i.i.d. random elements taking values in $\M$, all distributed according to $\mu$ with full support on $\M$.  
Define  $\XX_n = \{X_1, \ldots, X_n\}$.
Assume that the similarity kernel $\eta : \XX \times \XX \to \R_+$ is symmetric, continuous, and strictly positive, i.e.,
\[
\forall x,y \in \M, \quad \eta(x,y)  > 0.
\]
Then the graph Laplacian operator $L$ defined by \eqref{def:graphLaplace} converges compactly to a linear operator $U$ on $\M$ {almost surely}. Moreover, if the first $k$ eigenvalues $0 \leq \sigma_1 \leq \cdots \leq \sigma_k \leq 1$ of $U$ are simple and satisfy $\sigma_k \neq 1$, then, for sufficiently large $n$, the first $k$ eigenvalues of $L$ also satisfy
\[
0 = \lambda_1 \leq \cdots \leq \lambda_k < 1,
\]
with 
\[
\lambda_i \underset{n \to \infty}{\longrightarrow} \sigma_i, \quad i = 1, \ldots, k.
\]
Furthermore, each eigenvector $u_i$ associated to $\lambda_i$ converges uniformly to an eigenfunction $f_i : \M \to \R$ associated to $\sigma_i$, in the sense that
\[
\sup_{j=1,\ldots,n} \big| u_i^j - f_i(X_j) \big| \underset{n \to \infty}{\longrightarrow} 0, \quad i = 1, \ldots, k.
\]
\end{thm}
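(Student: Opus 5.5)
This statement is \cite[Theorem 15]{von2008consistency}, so the most economical route is to invoke it. Still, I would organize a self-contained argument along the lines of von Luxburg, Belkin and Bousquet, namely operator-theoretic perturbation theory on the Banach space $C(\M)$. The plan has four steps: lift the discrete operator to $C(\M)$, prove compact convergence of the lifted operators, transfer spectral convergence through compact-convergence perturbation results, and restrict back to the sample points.

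First I lift the discrete operator to $C(\M)$. Write $d_n(x)=\frac1n\sum_j \eta(x,X_j)$ and $d(x)=\int \eta(x,y)\,\mu(dy)$. The identity-minus-kernel part $D^{-1/2}\eta D^{-1/2}$ acts on vectors. It corresponds to the integral operator
\[
(T_n g)(x)=\frac1n\sum_{j}\frac{\eta(x,X_j)}{\sqrt{d_n(x)d_n(X_j)}}\,g(X_j)
\]
on $C(\M)$, whose population counterpart is
\[
(Tg)(x)=\int \frac{\eta(x,y)}{\sqrt{d(x)d(y)}}\,g(y)\,\mu(dy).
\]
I set $U=I-T$. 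The nonzero eigenvalues of $T_n$ coincide with those of the matrix $D^{-1/2}\eta D^{-1/2}$. An eigenvector $u$ extends to the eigenfunction $x\mapsto (1-\lambda)^{-1}(T_nu)(x)$, which restricts to $u$ on the sample (up to the $\sqrt n$ normalisation). This extension is well defined exactly when $\lambda\neq1$, which explains the hypothesis $\sigma_k\neq 1$.

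Second, I prove compact convergence $T_n\to T$ almost surely. Continuity and strict positivity of $\eta$ on the compact $\M\times\M$ give $0<c\le\eta\le C$, hence $c\le d_n,d\le C$. The class $\{\eta(x,\cdot):x\in\M\}$ is uniformly equicontinuous and bounded, so it is Glivenko–Cantelli. This yields $\|d_n-d\|_\infty\to0$ almost surely, and likewise uniform convergence of the normalized kernels. From there, $\|T_ng-Tg\|_\infty\to0$ for each $g$ follows from a Glivenko–Cantelli argument over the class $\{\eta(x,\cdot)g(\cdot)/\sqrt{d(\cdot)}\}$. Collective compactness of $\{T_n\}$ holds because the images of the unit ball are uniformly bounded and equicontinuous, by the uniform continuity of $\eta/\sqrt{d_nd}$ together with Arzelà–Ascoli.

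Third, I transfer spectral convergence. I invoke the classical perturbation theory for compactly converging operators (Atkinson, Chatelin). Isolated eigenvalues of $T$ away from $0$ are limits of eigenvalues of $T_n$ with matching multiplicity, and the spectral projections converge. Since $\sigma_1,\dots,\sigma_k$ are simple and $\sigma_k<1$, the values $1-\sigma_i$ are simple isolated eigenvalues of the compact operator $T$, lying away from $0$. This gives $\lambda_i\to\sigma_i$. It also gives, after sign alignment, $\|\tilde u_i-f_i\|_\infty\to0$ for the lifted eigenfunctions $\tilde u_i$. Restricting to the sample points gives the stated $\sup_j|u_i^j-f_i(X_j)|\to0$. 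Finally, $\lambda_1=0$ holds exactly for every $n$, because $D^{1/2}\mathbf 1$ is an eigenvector.

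The main obstacle is the second step. Pointwise convergence of $T_n$ to $T$ is not enough, and I need collective compactness uniformly along almost every sample path. This is where the data-dependent normalisation $d_n$ must be handled, which I plan to do by first replacing $d_n$ with $d$ using the uniform bound $\|d_n-d\|_\infty\to0$. A secondary point is fixing the sign and normalisation of the eigenvectors so that the limit $f_i$ is well defined; simplicity of the eigenvalues makes this a matter of choosing the sign of each eigenvector.
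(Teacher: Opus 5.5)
Your proposal is correct and matches the paper's approach: the paper gives no proof of its own and simply invokes \cite[Theorem 15]{von2008consistency}. Your sketch is the argument of that reference: lift $D^{-1/2}\eta D^{-1/2}$ to the finite-rank operator $T_n$ on $C(\M)$ (which needs $\lambda\neq 1$), use Glivenko--Cantelli and Arzel\`a--Ascoli to get collectively compact convergence $T_n\to T$, then apply Atkinson--Chatelin perturbation theory and restrict to the sample. Your sign alignment is in fact required, because the paper's restatement drops the $\pm1$ ambiguity of the original theorem; and identifying $\lambda_i$ as the $i$-th smallest eigenvalue also uses the no-spurious-eigenvalue half of that theory, namely that nonzero limit points of eigenvalues of $T_n$ lie in the spectrum of $T$.
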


For the definition of compact convergence of operators, we refer the reader to \cite[Section 4]{von2008consistency} and the references therein.
Note that the limiting operator can be expressed as
\begin{equation}\label{def:limitLaplacian}
U(f)(x) = f(x) - \mathbb{E} \left[ \frac{\eta(x,X_1)}{\sqrt{\mathsf d(x)\, \mathsf d(X_1)}}\, f(X_1) \right]
\end{equation}
where
\[
\mathsf d(x) := \mathbb{E}[\eta(x,X_1)].
\]

We emphasize that the simplicity assumption on the eigenvalues of $U$ is not overly restrictive, since simplicity is a \emph{generic} property: in other words, almost all operators have simple eigenvalues (consider, for instance, the case of a random matrix). For more details, we refer the reader to \cite{Albert75, CARLSON1979472}.

From Theorem~\ref{thm:spectralconvergenceliterature}, we obtain the uniform convergence of the diffusion distances.

\begin{corol}\label{cor:convergencediffusiondistance}
Under the same assumptions as in Theorem~\ref{thm:spectralconvergenceliterature}, for every $t \ge 0$, the diffusion distance $d_t$ defined in~\eqref{def:diffusiondistance} converges almost surely and uniformly toward a distance $\delta_t$ on $\M$:
\[
\text{almost surely,} \qquad
\sup_{i,j} \left| d_t(X_i, X_j) - \delta_t(X_i, X_j) \right|
\underset{n \to \infty}{\longrightarrow} 0.
\]
Moreover, the limit distance $\delta_t$ is given by
\[
\delta_t(x,y)^2
   = \sum_{l=1}^k (1-\sigma_l)^{2t}\, \bigl(f_l(x) - f_l(y)\bigr)^2,
\]
where $\sigma_l$ and $f_l$ denote the first $k$ eigenvalues and the corresponding eigenfunctions of the limiting operator~\eqref{def:limitLaplacian} provided by Theorem~\ref{thm:spectralconvergenceliterature}.
\end{corol}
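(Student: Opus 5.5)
Write $a_l^n := (1-\lambda_l)^{t}$ and $a_l := (1-\sigma_l)^{t}$ for $l=1,\dots,k$. By \eqref{def:diffusiondistance}, $d_t(X_i,X_j)$ is the Euclidean norm in $\R^k$ of the vector $\bigl(a_l^n(u_l^i-u_l^j)\bigr)_{l\le k}$. Likewise $\delta_t(X_i,X_j)$ is the norm of $\bigl(a_l(f_l(X_i)-f_l(X_j))\bigr)_{l\le k}$. The plan is to compare these two vectors coordinatewise and use the reverse triangle inequality for the Euclidean norm:
\[
\left| d_t(X_i,X_j)-\delta_t(X_i,X_j)\right| \le \Bigl(\sum_{l=1}^k \bigl(a_l^n(u_l^i-u_l^j)-a_l(f_l(X_i)-f_l(X_j))\bigr)^2\Bigr)^{1/2}.
\]
It then suffices to show that each coordinate difference tends to $0$ uniformly in $i,j$, almost surely.

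For a fixed $l$, split each coordinate difference as
\[
a_l^n(u_l^i-u_l^j)-a_l(f_l(X_i)-f_l(X_j)) = (a_l^n-a_l)(u_l^i-u_l^j) + a_l\bigl[(u_l^i-f_l(X_i))-(u_l^j-f_l(X_j))\bigr].
\]
By Theorem~\ref{thm:spectralconvergenceliterature}, on a full-probability event we have $\lambda_l\to\sigma_l$ with all these values in $[0,1)$ for large $n$. Since $x\mapsto(1-x)^t$ is continuous on $[0,1]$, this gives $a_l^n\to a_l$; for $t=0$ both equal $1$. On the same event, $\sup_j|u_l^j-f_l(X_j)|\to0$, so the second term is bounded by $2a_l\sup_j|u_l^j-f_l(X_j)|\to0$.

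For the first term we need $\sup_{i,j}|u_l^i-u_l^j|$ to stay bounded. This is where I expect the only real care to be needed, because it depends on the normalization of the eigenvectors. Uniform convergence to $f_l$ does give the bound $\sup_j|u_l^j|\le \sup_{\M}|f_l| + o(1)$, provided $f_l$ is bounded on $\M$. I plan to justify boundedness of $f_l$ from \eqref{def:limitLaplacian}: for an eigenvalue $\sigma_l\neq1$, the eigenfunction satisfies $f_l=(1-\sigma_l)^{-1}\E[\eta(\cdot,X_1)(\mathsf d\,\mathsf d(X_1))^{-1/2}f_l(X_1)]$. The kernel is continuous and strictly positive on the compact set $\M\times\M$, so it is bounded above and away from zero, and hence $f_l$ is continuous. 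This is the same regularity implicitly used in \cite{von2008consistency}. Combining the two terms gives the uniform convergence of $d_t$ to $\delta_t$ over the sample points.

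It remains to check that $\delta_t$ is a genuine distance on $\M$. Symmetry and the triangle inequality are immediate, since $\delta_t(x,y)=\|\Psi(x)-\Psi(y)\|_2$ with $\Psi(x)=\bigl(a_lf_l(x)\bigr)_{l\le k}$. Separation of points fails when $\Psi$ is not injective. In the same spirit as the remark on $d_t$, I will handle this by passing to the quotient $\M/\!\sim$ that identifies points with equal $\Psi$-image. This quotient is compact because $\Psi$ is continuous, and on it $\delta_t$ is a true distance. This is the sense in which "distance" is to be read, consistent with the paper's convention for $d_t$.
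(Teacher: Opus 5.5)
Your proposal is correct and follows the paper's route: the paper gives no separate proof, only noting that, by the explicit formula \eqref{def:diffusiondistance}, uniform convergence of $d_t$ follows from the convergence of the first $k$ eigenvalues and the uniform convergence of the eigenvectors supplied by Theorem~\ref{thm:spectralconvergenceliterature}. You fill in the details the paper leaves implicit, and they are sound: the reverse triangle inequality in $\R^k$, the uniform bound on $\sup_j|u_l^j|$ via boundedness of $f_l$, and the quotient by the map $x\mapsto\bigl((1-\sigma_l)^t f_l(x)\bigr)_{l\le k}$, which is needed for $\delta_t$ to separate points.
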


As stated in Section~\ref{ssec:iid}, we are therefore in a position to apply our main Theorem~\ref{thm:convergence}, from which we obtain the consistency of the $k$-means procedure for diffusion distances.

\begin{prop}\label{prop:consistencydiffusion}
Under the assumptions of Theorem~\ref{thm:spectralconvergenceliterature}, the $k$-means procedure performed on the metric measure space 
\[
\mathcal{X}_n = (\XX_n,\, d_t,\, \mu_n),
\]
is consistent in the sense of Theorem~\ref{thm:convergence}.  
That is, for every $\varepsilon > 0$, there exists $N \in \mathbb{N}$ such that for all $n \ge N$ and for every set of $k$-barycenters$S_n \in \mathcal{S}_{\mathcal X_n}(k) \subset \XX_n$ of $\mathcal{X}_n$, there exists a set of $k$-barycenters ${S} \in \mathcal S_{\mathcal X}(k) \subset \M$ of $\mathcal X= (\M, \delta_t, \mu)$ which is $\varepsilon$–close in Hausdorff distance:
\[
d_H(S_n, {S}) \le \varepsilon.
\]
\end{prop}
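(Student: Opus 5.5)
The plan is to reduce Proposition~\ref{prop:consistencydiffusion} to Theorem~\ref{thm:convergence} through Proposition~\ref{prop:mGHsample}, using the inclusions $h_n(x)=x$ as the $\varepsilon_n$-isometries. One technical point comes first. Since $d_t$ and $\delta_t$ are in general only pseudo-distances, we work throughout with the quotient spaces. On the limit side, $\mathcal X=(\M/\!\sim_{\delta_t},\delta_t,\mu)$, pushed forward to the quotient. On the sample side, $\XX_n/\!\sim_{d_t}$, in accordance with the convention stated before Theorem~\ref{thm:spectralconvergenceliterature}. The limit space is compact: each $f_l$ is continuous (it is an eigenfunction of the integral operator~\eqref{def:limitLaplacian} with continuous kernel, and $\sigma_l\neq 1$), so $\delta_t$ is continuous on $\M\times\M$ and the quotient is a continuous image of the compact set $\M$.

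Next we verify the two hypotheses of Proposition~\ref{prop:mGHsample} with $(\XX,d)$ replaced by $(\M/\!\sim,\delta_t)$. Hypothesis i) is exactly Corollary~\ref{cor:convergencediffusiondistance}. For completeness, that corollary follows from Theorem~\ref{thm:spectralconvergenceliterature}: write $d_t$ and $\delta_t$ as Euclidean norms of the vectors $((1-\lambda_l)^t(u_l^i-u_l^j))_l$ and $((1-\sigma_l)^t(f_l(X_i)-f_l(X_j)))_l$. By the reverse triangle inequality, $|d_t-\delta_t|$ is bounded by the norm of their difference. This in turn is controlled by $\sup_j|u_l^j-f_l(X_j)|$, $|\lambda_l-\sigma_l|$, and $\sup|f_l|<\infty$.

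Hypothesis ii) asks that $\sup_{x\in\M}\delta_t(x,\XX_n)\to 0$ almost surely. We argue as follows. Since $\delta_t$ is continuous on the compact set $\M$, it is uniformly continuous with respect to the ambient Euclidean metric. Hence it suffices that the sample is eventually $r$-dense in $\M$ for every $r>0$. This holds almost surely because $\mu$ has full support: cover $\M$ by finitely many balls of radius $r/2$. Each ball has positive $\mu$-mass, so each is almost surely hit eventually. Taking a countable sequence $r\downarrow 0$ gives the claim. Finally, the empirical measures converge weakly to $\mu$ almost surely, and this is preserved under the continuous quotient map. Proposition~\ref{prop:mGHsample} then yields $\mathcal X_n\overset{mGH}{\to}\mathcal X$ almost surely.

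We then apply Theorem~\ref{thm:convergence} pathwise on the almost sure event. Since $h_n$ is the inclusion, $h_n(S_n)=S_n$, and \eqref{eq:convergence} is precisely the stated $\varepsilon$--$N$ assertion with $d_H$ computed with respect to $\delta_t$. I expect the main obstacle to be the bookkeeping around the quotients rather than any analytic estimate. One must check that $k$-means on the quotient correspond to $k$-means of the pseudo-metric spaces. This holds because $\Phi$ only sees distances, so any representative works. One must also check that the inclusion descends to a well-defined $\varepsilon_n$-isometry between quotients. This can fail only when $d_t(x,y)=0$ but $\delta_t(x,y)>0$, and by hypothesis i) that gap is at most $\varepsilon_n$. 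The clean way around this is to pick any representative map, which remains an $\varepsilon_n$-isometry by i).
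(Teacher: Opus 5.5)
Your proposal is correct and follows the paper's route. Uniform convergence of $d_t$ to $\delta_t$ (Corollary~\ref{cor:convergencediffusiondistance}) together with full support of $\mu$ gives the two conditions of Proposition~\ref{prop:mGHsample} for the inclusion maps, hence mGH convergence, and Theorem~\ref{thm:convergence} then yields the claim. You also supply details the paper leaves implicit: condition ii) has to be checked in the metric $\delta_t$ (you do this via uniform continuity of $\delta_t$), and $\delta_t$ is only a pseudo-distance, which you handle by passing to quotients. In the quotient step, the representative map moves points and mass by at most $\varepsilon_n$ in $\delta_t$, so it is harmless for both the weak convergence and the Hausdorff estimate.
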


\begin{rmq}
Proposition \ref{prop:consistencydiffusion} states the consistency of the centroids calculated using the {$k$-}means procedure under a metric learned from the diffusion distance, by applying Theorem \ref{thm:convergence}. Similarly, Corollary \ref{cor:consistenceclusteringundermGH} also applies and gives the consistency of clusters calculated using the {$k$-}means procedure under a metric learned from the diffusion distance.
\end{rmq}

Since the consistency of the clustering procedure based on diffusion distances follows from the spectral convergence of the graph Laplacian used to define these distances, we conclude this section with a brief discussion of the literature on the convergence of graph Laplacians toward continuous operators.

Recall that the origins of spectral methods for dimensionality reduction trace back to 
\cite{takahashi1966minimal, berard1994embedding}, where heat kernels were employed to embed a manifold into an (infinite-dimensional) Hilbert space. 
In practice, an effective approximation of this embedding is obtained by truncating the expansion to the first $k$ frequencies; that is, to the first $k$ eigenvalues, yielding a finite-dimensional representation of the manifold in $\R^k$.  
Observe that letting $k \to \infty$ in the definition of the diffusion distance \eqref{def:diffusiondistance} recovers precisely the original embedding of Takahashi and Bérard–Besson–Gallot.

These ideas motivate a central principle underlying spectral clustering: ideal clusters should correspond to the connected components of the manifold $\M$. The eigenspace associated with the eigenvalue $0$ of the Laplace--Beltrami operator is precisely the space of functions that are constant on each connected component. An important case is that of the Gaussian kernel defined in~\eqref{def:Gaussiankernel}. In that setting, the limiting operator $U$ in~\eqref{def:limitLaplacian} takes the form
\[
U(f)(x) = f(x) - P_{\sigma^2}(f)(x),
\]
where $P_{\sigma^2}$ denotes the heat semigroup at time $\sigma^2$ generated by the Laplace-Beltrami operator. Consequently, under a suitable choice of the bandwidth parameter $\sigma^2 \to 0$ and an appropriate renormalization of the graph Laplacian $L$, one indeed expects the renormalized operator to converge to the Laplace-Beltrami operator.

Let us mention that another way to define a graph Laplacian is to smooth the data using a Gaussian kernel, rather than proceeding as in \eqref{def:graphLaplace}. For example, a normalized graph Laplacian can be defined from the data $X_1,\ldots,X_n$ as (see e.g.~\cite{gine2006empirical})
\begin{equation}\label{def:weigthedgraphLaplacian}
\Delta_{\varepsilon,n}\, g(p)
:= \frac{1}{n\,\varepsilon^{d+2}}
\sum_{i=1}^n 
\frac{\exp \left(-\frac{|X_i-p|^2}{2\varepsilon}\right)}
{\sum_{j \neq i} \exp \left(-\frac{|X_i-X_j|^2}{2\varepsilon}\right)}
\left( g(X_i) - g(p) \right).
\end{equation}
Such operators have the advantage of being defined directly on $\M$, which makes the analysis of their convergence toward the (weighted) Laplace--Beltrami operator more natural, in contrast with the discrete operators in \eqref{def:graphLaplace}.
The convergence of those operators towards the (weighted) Laplace-Beltrami operator on $\M$, as well as the convergence of the associated eigenvalues and eigenfunctions, has been extensively studied in the literature. Hein, Audibert, and von Luxburg \cite{hein2005graphs} were among the first to establish the strong pointwise consistency of a family of graph Laplacians with data-dependent weights toward a weighted Laplace operator, including the case where the data lie on a submanifold.
Belkin and Niyogi \cite{belkin2006convergence} proved the convergence in probability of all eigenvalues of the (weighted) graph Laplacian toward those of the continuous (weighted) Laplace--Beltrami operator, as well as the convergence in probability of the associated eigenfunctions in the $L^2$ norm.  
Independently and almost simultaneously, Giné and Koltchinskii \cite{gine2006empirical} and Hein \cite{hein2006uniform} showed that, almost surely, the (weighted) graph Laplacian operator converges uniformly for a suitable class of functions toward the (weighted) Laplace--Beltrami operator.  
Subsequently, Singer \cite{singer2006graph} provided a convergence rate for this uniform convergence.  
More recently, Wormell and Reich \cite{wormell2021spectral} provided a refined analysis in the case of distributions supported on a hypertorus, improving upon Singer’s rate and even upgrading the $L^2$ convergence of eigenfunctions obtained by Giné-Koltchinskii and Hein to convergence in $L^\infty$.  
In parallel, García Trillos, Gerlach, Hein, and Šlepčev \cite{garcia2020error} and Calder and García Trillos \cite{calder2022improved} established quantitative rates for the Belkin-Niyogi result concerning the convergence of eigenvalues and the $L^2$ convergence of eigenfunctions. Let us also mention the recent paper \cite{wahl2024kernel} by Wahl, where the study of the graph Laplacian is reformulated through kernel PCA by viewing the heat kernel of the manifold as a reproducing-kernel feature map. Even more recently, Xu and Singer \cite{xu2025manifold} identified conditions ensuring pointwise convergence of the graph Laplacian in the broader framework of general metric spaces, rather than smooth manifolds.

Since in our setting we are interested in the \emph{uniform} convergence of eigenfunctions, the $L^2$-type convergence results commonly found in the literature are not sufficient. Nevertheless, such $L^2$ convergence can often be upgraded to uniform convergence by combining compactness arguments from functional analysis with the perturbation theory of linear operators; see, for instance, \cite{kato2013perturbation}. A detailed discussion of these techniques lies beyond the scope of the present work.

\subsection{Wasserstein {\it k-}barycenters}\label{sec:Wasserstein}

Clustering probability distributions has recently received considerable
attention, see for instance
\cite{papayiannis2021clustering, zhuang2022wasserstein, okano2024wasserstein}.
Most of the literature, however, has focused on computational aspects, while
asymptotic guarantees have remained scarce. To the best of our knowledge, \cite{JaffeKmeans} provides the first published
consistency result for Wasserstein $k$-means with $k>1$, and does so in the
general setting of metric spaces under mild assumptions. Previous works
addressed only the case $k=1$ or the Euclidean distance in $\mathbb{R}^{D}$: the consistency of Wasserstein barycenters was established in
\cite{10.3150/13-BEJ585} for a finite number of measures in $\R^\ell$ and with great generality in \cite{LeGouicLoubes} (infinite number of measures, even uncountable are allowed and the authors consider the Wasserstein space over a general geodesic metric space). Convergence rates were later obtained in
\cite{LeGouicParisRigollet} for the $2$-Wasserstein distance of probability measures over Euclidean space.

In this section, we address the case of the Wasserstein space built over a metric
space whose distance is unknown and must therefore be learned from the data.
This leads to a \emph{learned Wasserstein distance} on the space of probability
measures, with respect to which we compute Wasserstein $k$-means and establish
their consistency.

Before turning to the case of an unknown distance, let us first highlight how
our result naturally generalizes Jaffe’s theorem \cite{JaffeKmeans} in the
setting of a fixed, known metric.

Assume that $(\XX,d)$ is a compact metric space and let  $d_W$ be the $L^2$-Wasserstein distance induced by the quadratic cost $d^2$
on $\XX$. The compactness of $\XX$ guarantees that $(\mathcal P(\XX), d_W)$ is also a  compact metric space.
Interestingly, in this case  $d_W$ convergence is  equivalent to weak convergence. 

Consider a sequence of probabilities  $(\mu_n)_{n\geq 1}$ defined on $\mathcal P(\XX)$, converging weakly towards $\mu$ and define 
\begin{equation}
\label{spaces}
\mathcal{X}_n = \big(\mathcal P(\XX),\, d_W,\, \mu_n\big), \quad \mathcal{X} = \big(\mathcal P (\XX),\, d_W,\, \mu\big).
\end{equation}

Since we have the weak convergence $\mu_n \to \mu$, it is straightforward to check that $\mathcal{X}_n$
converges in the measured Gromov-Hausdorff topology to $\mathcal{X}$.
 
Therefore, Corollary~\ref{cor:embeddedmGH} applies, which yields the consistency of Wasserstein $k$-means.
In particular, this recovers the result of
\cite{10.3150/13-BEJ585, LeGouicLoubes} in the case $k=1$ and Jaffe’s result
\cite{JaffeKmeans} for all $k \ge 1$.

To be more precise, Jaffe did not explicitly state this result in the context of Wasserstein $k$-barycenters, but it follows immediately from an application of his main theorem \cite[Theorem~1]{JaffeKmeans} to this setting.

An important and practically relevant instance of this setting is the following. We are given $m \ge k$
probability measures $\alpha_1,\dots,\alpha_m$ on $\XX$. 
We wish to find a  Wasserstein $k$-barycenter of this measures.  To fit the framework presented in this section,  let $\mu$ be a discrete probability on $\mathcal P(\XX)$, assigning mass $m^{-1}$ to $\alpha_i$, for $i=1, \ldots, m$. Namely, 
\begin{equation}
    \label{mu_pob_wass}
    \mu=\frac{1}{m}\sum_{i=1}^m \delta_{\alpha_i}.
\end{equation}
We look for an element in $\mathcal S_{\mathcal X}(k)$, for ${\mathcal X}$ defined in  \eqref{spaces}, with $\mu$ given in \eqref{mu_pob_wass}.
In
practice, each distribution $\alpha_i$ is only observed through a finite
sample of $n$ points $\XX_n^i\subset\XX$. A natural approach is therefore to replace each
$\alpha_i$ by its empirical measure $$\widehat \alpha_i = \frac{1}{|\XX_n^i|}\sum_{x\in\XX_n^i} \delta_{x},\quad \hbox{for $i=1, \ldots, m$}
$$ and to consider
\[
\mu_n = \frac{1}{m}\sum_{i=1}^m \delta_{\widehat {\alpha_i}}.
\]
the uniform distribution on $\widehat \alpha_1,\dots,\widehat \alpha_m$. Since $\widehat \alpha_i \to
\alpha_i$ weakly as the sample size grows (and in $d_W)$, we obtain the weak convergence of $\mu_n$ to $\mu$. Therefore, the above
consistency result applies, and the empirical Wasserstein $k$-barycenter (or baryncenters) of
 $\widehat \alpha_1,\dots,\widehat \alpha_m$ converges to the Wasserstein $k$-barycenter of $\alpha_1,\ldots, \alpha_m$, in the sense of \eqref{eq:convergence}.

Let us mention that we do not claim any novelty in this recovery of Jaffe’s result, since the proof
of our Theorem~\ref{thm:convergence} relies on \cite[Lemma~4]{JaffeKmeans}, which
establishes the continuity of the clustering functional. However, our
formulation via measured Gromov-Hausdorff convergence both conceptualizes
Jaffe’s result and, more importantly, provides a natural framework for a generalization to the case of unknown metrics, which is the focus of the next paragraph.\\

We now turn to the more challenging and realistic situation where $d$ is not
observed. In this case, we first construct a learned empirical distance $d_n$
from the sample. Using $d_n$, we define an empirical Wasserstein distance
$d_W^{(n)}$ on the space of probability measures over the sample $\XX_n$. We then compute
Wasserstein $k$-barycenters with respect to this learned distance. The goal is
to show that this full procedure, learning a metric and then performing
Wasserstein $k$-means, is still a consistent estimator of the population
$k$-barycenter.
 
Recall that  $(\XX,d)$ is a compact metric space, and $(\mathcal P(\XX),d_W)$ denote its
associated Wasserstein space. Consider a sequence of compact metric spaces
$(\XX_n,d_n)$ converging to $(\XX,d)$ in the Gromov-Hausdorff topology. By
definition, there exist numbers $\varepsilon_n \to 0$ and $\varepsilon_n$-isometries
\[
h_n : (\XX_n,d_n) \longrightarrow (\XX,d).
\]
For each $n$,  let $d_W^{(n)}$  denote the Wasserstein distance defined between probability measured in $(\XX_n, d_n)$ and consider the compact spaces $(\mathcal P(\XX_n), d_W^{(n)})$.

It is a classical result that the Gromov-Hausdorff convergence of compact metric
spaces implies the Gromov-Hausdorff convergence of the associated Wasserstein
spaces, see for instance \cite[Theorem~28.6]{villani2008optimal}. We shall use the
following quantitative version.

\begin{thm}\label{prop:cvlearnedW}
Let $h_n : (\XX_n,d_n) \to (\XX,d)$ be an $\varepsilon_n$-isometry. Then the map
\[
\begin{aligned}
h_n^{\#} : (\mathcal P (\XX_n),d_W^{(n)}) &\longrightarrow (\mathcal P(\XX),d_W), \\
\alpha &\longmapsto h_n\#\alpha,
\end{aligned}
\]
is an $8\bigl(\varepsilon_n + \sqrt{\varepsilon_n\,\mathrm{diam}(\XX)}\bigr)$-isometry. In
particular,
\[
\sup_{\alpha,\beta \in \mathcal P(\XX_n)}
\left|\,d_W^{(n)}(\alpha,\beta) - d_W \left(h_n^{\#}\alpha,\,h_n^{\#}\beta\right)\right|
\le 8\left(\varepsilon_n + \sqrt{\varepsilon_n\,\mathrm{diam}(\XX)}\right).
\]
\end{thm}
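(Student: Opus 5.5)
We have to prove two things about $h_n^{\#}$: a distortion bound and $\eta_n$-density of its image, with $\eta_n:=8(\varepsilon_n+\sqrt{\varepsilon_n\,\mathrm{diam}(\XX)})$. Throughout we use the elementary fact that for $a,b\ge 0$ and $|a-b|<\varepsilon$ one has $|a^2-b^2|<\varepsilon(a+b)\le 2\varepsilon D'$, where $D'$ bounds both diameters. Since $\mathrm{diam}(\XX_n)\le \mathrm{diam}(\XX)+\varepsilon_n$, we may take $D'=\mathrm{diam}(\XX)+\varepsilon_n$.

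\emph{Distortion, upper bound.} Fix $\alpha,\beta\in\mathcal P(\XX_n)$ and an optimal coupling $\pi$ for $d_W^{(n)}(\alpha,\beta)$. Then $(h_n\times h_n)_\#\pi$ is a coupling of $h_n^\#\alpha$ and $h_n^\#\beta$. This gives
\[
d_W^2\big(h_n^{\#}\alpha,h_n^{\#}\beta\big)\le \int d^2(h_n x,h_n y)\,d\pi \le d_W^{(n)}(\alpha,\beta)^2+2\varepsilon_n D'.
\]
Using $\sqrt{a^2+c}\le a+\sqrt c$, we get $d_W(h_n^\#\alpha,h_n^\#\beta)\le d_W^{(n)}(\alpha,\beta)+\sqrt{2\varepsilon_nD'}$, which is of the required form.

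\emph{Distortion, reverse bound.} This is the main obstacle, because a coupling on $\XX$ need not lift: $h_n$ is neither injective nor surjective. My first attempt is to disintegrate. Take an optimal coupling $\gamma$ of $h_n^\#\alpha$ and $h_n^\#\beta$. Write $\alpha=\int \alpha_u\,(h_n^\#\alpha)(du)$, where $\alpha_u$ are the conditional measures on the fibres $h_n^{-1}(u)$, and similarly for $\beta$. Then $\pi=\int \alpha_u\otimes\beta_v\,\gamma(du,dv)$ is a coupling of $\alpha$ and $\beta$ on $\XX_n$. Its cost satisfies $d_n^2(x,y)\le d^2(h_nx,h_ny)+2\varepsilon_n D'$ pointwise, and $h_nx=u$, $h_ny=v$ on the support. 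Hence $d_W^{(n)}(\alpha,\beta)^2\le d_W^2(h_n^\#\alpha,h_n^\#\beta)+2\varepsilon_nD'$.

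This needs measurability of $h_n$, which is not part of the definition of an $\varepsilon$-isometry. If it causes trouble, I would replace $h_n$ by a Borel $2\varepsilon_n$-isometry that is piecewise constant on a finite Borel partition of $\XX_n$ into sets of small diameter. On finite partitions the disintegration is trivial. This is where the factor $8$ leaves room.

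\emph{Near-surjectivity.} Given $\nu\in\mathcal P(\XX)$, partition $\XX$ into finitely many Borel cells $A_j$ of diameter $<\varepsilon_n$. In each cell pick $u_j\in A_j$ and $x_j\in\XX_n$ with $d(h_n x_j,u_j)<\varepsilon_n$, which is possible by the density property of $h_n$. Set $\alpha=\sum_j\nu(A_j)\delta_{x_j}$. Transport $\nu$ to $h_n^\#\alpha$ by sending each $A_j$ to $h_nx_j$; every point moves by less than $2\varepsilon_n$. So $d_W(\nu,h_n^\#\alpha)<2\varepsilon_n$.

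Finally, collect the constants: $\sqrt{2\varepsilon_n D'}\le \sqrt{2\varepsilon_n\mathrm{diam}(\XX)}+\sqrt2\,\varepsilon_n$. With the slack from the measurable modification, this stays below $8(\varepsilon_n+\sqrt{\varepsilon_n\mathrm{diam}(\XX)})$. The displayed sup bound is just the distortion part.
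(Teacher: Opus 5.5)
Your argument is correct. It differs from the paper in that the paper gives no proof of its own: it presents the statement as the quantitative form of Villani's Theorem~28.6 and relies on that reference. Constants like $8\bigl(\varepsilon_n+\sqrt{\varepsilon_n\,\mathrm{diam}(\XX)}\bigr)$ are typical of the approximate-inverse argument. There the reverse inequality comes from composing with a measurable approximate inverse of $h_n$. That needs only push-forwards of couplings, but costs several additive multiples of $\varepsilon_n$.

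Your key step is different: you lift an optimal coupling $\gamma$ of $h_n^{\#}\alpha$ and $h_n^{\#}\beta$ fibrewise, $\pi=\int\alpha_u\otimes\beta_v\,\gamma(du,dv)$. This is legitimate because $\XX_n$ is compact, hence Polish, and $h_n$ is Borel. So the disintegration theorem applies, and $\alpha_u,\beta_v$ are carried by $h_n^{-1}(u),h_n^{-1}(v)$ for $\gamma$-a.e.\ $(u,v)$. You get a symmetric distortion bound $\sqrt{2\varepsilon_n(\mathrm{diam}(\XX)+\varepsilon_n)}$ and a density bound $<2\varepsilon_n$, both strictly below $8\bigl(\varepsilon_n+\sqrt{\varepsilon_n\,\mathrm{diam}(\XX)}\bigr)$. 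So you in fact prove a sharper statement. The approximate-inverse route only buys the avoidance of disintegration.

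There are two small corrections.

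\emph{The measurability fallback.} Replacing $h_n$ by a piecewise constant Borel map is unnecessary and misdirected. The statement is about $h_n^{\#}$ for the given $h_n$, and $h_n\#\alpha$ is not even defined unless $h_n$ is measurable. So measurability is implicit in the hypothesis, and it is automatic for the finite $\XX_n$ of the applications. Proving the bound for a modified map would be a statement about a different map. Nor do you need any of the slack in the factor $8$.

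\emph{A strict inequality.} Write $|a^2-b^2|=|a-b|(a+b)\le\varepsilon(a+b)$ with $\le$, because the strict version fails when $a=b=0$. The strict inequalities required of an $\eta_n$-isometry still hold, since your final bounds are strictly smaller than $\eta_n$.
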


Assume in addition that we are given a probability measure $\mu \in \mathcal P (\mathcal P (\XX))$, together with a sequence $(\mu_n)_n$, with $ \mu_n\in \mathcal P (\XX_n)$, such that $h_n^\#\mu_n$ converges weakly to $\mu$. Then, by Theorem~\ref{prop:cvlearnedW}, the metric measure spaces
\[
\bigl(\mathcal P (\XX_n),\, d_W^{(n)},\, \mu_n\bigr)
\]
converge to $\bigl(\mathcal P (\XX),\, d_W,\, \mu\bigr)$ in the measured Gromov-Hausdorff topology. Consequently, Theorem~\ref{thm:convergence} applies and yields the consistency of Wasserstein $k$-barycenters in this learned setting. \\

A particularly relevant situation, is when we consider $A$, $\mathbb X_n^i$, $\widehat  \alpha_i$ and $\mu_n$ as before. Letting $\XX_n = \bigcup_{i=1}^m \XX_n^i$, consider any consistent estimator $d_n$ of the ground metric $d$ (see sections \ref{ssec:fermat} and \ref{sec:spectraldistances}). We may then define $d_W^{(n)}$ as the corresponding learned Wasserstein distance on $\mathcal P (\XX_n)$. Thanks to Theorem~\ref{prop:cvlearnedW}, this implies the following measured Gromov-Hausdorff convergence:
\[
\bigl(\mathcal P (\XX_n),\, d_W^{(n)},\, \mu_n\bigr)
\;\xrightarrow[n\to\infty]{mGH}\;
\bigl(\mathcal P (\XX),\, d_W,\, \mu\bigr).
\]
Note that in this case, the $\varepsilon_n$-isometries are simply the natural embeddings since $\XX_n\subset\XX$.
Therefore, by Theorem~\ref{thm:convergence}, the empirical Wasserstein $k$-barycenter of $\mu_n$ (computed with respect to the learned Wasserstein distance $d_W^{(n)}$) converges to the Wasserstein $k$-barycenter of $\mu$, that is, to the barycenter of the original distributions $\alpha_1,\dots,\alpha_m$. We conclude with the following theorem.

\begin{thm}\label{thm:convergenceforWasserstein}
Let $\alpha_1,\dots,\alpha_m$ be $m\ge k$ probability distributions on a compact metric space $(\XX,d)$, and let
\[
\XX_n := \bigcup_{i=1}^m \XX_n^i,
\]
where, for each $i$, the set $\XX_n^i$ consists of $n$ i.i.d.\ samples drawn from $\alpha_i$. Consider any consistent estimator $d_n$ of the ground metric $d$ (i.e., $(\XX_n,d_n)$ converges in GH to $(\XX,d)$) and let $d_W^{(n)}$ be the $L^2$-Wasserstein distance computed on $(\XX_n,d_n)$; let $\widehat \alpha_i\in \mathcal P (\XX_n)$ be the empirical measure associated with $\XX_n^i$; and let $\mu_n\in \mathcal P (\mathcal P (\XX_n))$ be the uniform distribution on the set $\{\widehat \alpha_1,\dots,\widehat \alpha_m\}$. Then the following holds:

For every $\varepsilon>0$, there exists $N\in\mathbb{N}$ such that for all $n\ge N$, for every set of $k$-barycenters
$S_n \in \mathcal{S}_{\mathcal X_n}(k) \subset \mathcal P (\XX_n)$ of $\mathcal X_n=(\mathcal P (\XX_n), d_W^{(n)}, \mu_n)$, there exists a corresponding set of $k$-barycenters $S \in \mathcal{S}_{\mathcal X}\subset \mathcal P (\XX)$ of $\mathcal X=(\mathcal P (\XX), d_W, \mu)$ that is $\varepsilon$-close in the Hausdorff distance, i.e.
\[
d_H(S_n, {S}) \le \varepsilon.
\]
\end{thm}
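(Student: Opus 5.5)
The plan is to reduce Theorem~\ref{thm:convergenceforWasserstein} to Theorem~\ref{thm:convergence}. Concretely, I will verify that
\[
\mathcal X_n=\bigl(\mathcal P(\XX_n),d_W^{(n)},\mu_n\bigr)\overset{mGH}{\to}\mathcal X=\bigl(\mathcal P(\XX),d_W,\mu\bigr),
\]
where $\mu=\frac1m\sum_{i=1}^m\delta_{\alpha_i}$. The $\varepsilon$-isometries involved will be the push-forward maps $h_n^\#$, with $h_n$ the inclusion $\XX_n\hookrightarrow\XX$.

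\textbf{Step 1: compactness.} Both $\mathcal P(\XX_n)$ and $\mathcal P(\XX)$ are compact metric spaces. $\XX_n$ is finite, so $\mathcal P(\XX_n)$ is a simplex carrying a metric $d_W^{(n)}$ that is continuous in the Euclidean topology. $\mathcal P(\XX)$ is compact because $\XX$ is compact and $d_W$ metrizes weak convergence.

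\textbf{Step 2: the isometries.} By hypothesis $(\XX_n,d_n)\to(\XX,d)$ in the GH sense through the inclusion maps. So $h_n(x)=x$ is an $\varepsilon_n$-isometry with $\varepsilon_n\to0$; for instance $\varepsilon_n$ is the maximum of $\sup_{x,x'\in\XX_n}|d_n-d|$ and $\sup_{x\in\XX}d(x,\XX_n)$, as in Proposition~\ref{prop:mGHsample}. Theorem~\ref{prop:cvlearnedW} then shows that $h_n^\#$ is an $\varepsilon_n'$-isometry with
\[
\varepsilon_n'=8\bigl(\varepsilon_n+\sqrt{\varepsilon_n\,\mathrm{diam}(\XX)}\bigr)\to0 .
\]

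\textbf{Step 3: weak convergence of the measures.} On $\mathcal P(\XX)$ we have $(h_n^\#)_\#\mu_n=\frac1m\sum_i\delta_{h_n\#\widehat\alpha_i}$. Since $h_n$ is the inclusion, $h_n\#\widehat\alpha_i$ is just the empirical measure of $\alpha_i$ viewed in $\XX$. By Varadarajan's theorem (the empirical measure on a separable metric space converges weakly a.s.), $d_W(h_n\#\widehat\alpha_i,\alpha_i)\to0$ almost surely for each of the finitely many $i$. For any bounded continuous $F$ on $(\mathcal P(\XX),d_W)$,
\[
\frac1m\sum_{i=1}^m F\bigl(h_n\#\widehat\alpha_i\bigr)\;\longrightarrow\;\frac1m\sum_{i=1}^m F(\alpha_i),
\]
so $(h_n^\#)_\#\mu_n\to\mu$ weakly, almost surely. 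This completes the mGH convergence on a probability-one event.

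\textbf{Step 4: conclusion.} Apply Theorem~\ref{thm:convergence} with the maps $h_n^\#$. Given $\varepsilon>0$, for $n$ large, every $S_n\in\mathcal S_{\mathcal X_n}(k)$ has some $S\in\mathcal S_{\mathcal X}(k)$ with $d_H(h_n^\#(S_n),S)\le\varepsilon$. Because $h_n^\#$ simply regards a measure on $\XX_n$ as a measure on $\XX$, $h_n^\#(S_n)$ is identified with $S_n$, which gives the statement.

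\textbf{Main obstacle.} The main obstacle is the quantifiers. The conclusion holds almost surely, because the GH consistency of $d_n$ is typically an a.s. statement and so is the empirical convergence. I would therefore state and prove everything on the intersection of these full-measure events.

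A second subtlety is the identification $S_n=h_n^\#(S_n)$. The Hausdorff distance in the statement must be read in $(\mathcal P(\XX),d_W)$ after this identification, not with $d_W^{(n)}$. Theorem~\ref{prop:cvlearnedW} shows that the two readings differ by at most $\varepsilon_n'$, so either one works.
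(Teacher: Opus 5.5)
Your proof is correct and follows the paper's argument. You obtain $\mathcal X_n \overset{mGH}{\to}\mathcal X$ from Theorem~\ref{prop:cvlearnedW}, using the inclusions $h_n$ as the ground $\varepsilon_n$-isometries (the paper reads ``consistent estimator'' the same way), check that $(h_n^\#)_\#\mu_n\to\mu=\frac1m\sum_i\delta_{\alpha_i}$ weakly because $\widehat\alpha_i\to\alpha_i$, and then invoke Theorem~\ref{thm:convergence}. Your Step~3 is more explicit than the paper's, and the only slip is cosmetic: the Hausdorff distance between $S_n$ and $S\subset\mathcal P(\XX)$ can only be read in $d_W$ after the identification, since $d_W^{(n)}$ is not defined on $\mathcal P(\XX)$, so ``either one works'' is not meaningful.
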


\begin{rmq}
Theorem \ref{thm:convergenceforWasserstein} states the consistency of the centroids calculated using the {\it k-}mean procedure under the learned Wasserstein distance, by applying Theorem \ref{thm:convergence}. 
Note that, similarly, Corollary \ref{cor:consistenceclusteringundermGH} also applies and gives the consistency of clusters calculated using the {\it k-}mean procedure under the learned Wasserstein distance.
\end{rmq}

\subsection{First passage percolation}

In this subsection we prove convergence of $k$-barycenters for first passage Percolation (FPP) models. Up to our knowledge, these results are new even for $k=1$.

FPP is a model proposed by Hammersley and Welsh \cite{hammersley1965first} to describe the propagation of a fluid or an infection in a random media. To fix ideas, we consider the $\ell-$dimensional lattice $\mathbb L^\ell=(\ZZ^\ell,\EE^\ell)$ in which $x,y\in \ZZ^\ell$ are declared neighbors (i.e. $\{x,y\}\in \EE^\ell$) if and only if the $1-$norm $|x-y|_1=1$. We denote this by $x\sim y$. To each edge $e\in \EE^\ell$ we assign a random passage time $\tau_e\in\R_+$. The random variables $(\tau_e)_{e\in \EE^\ell}$ are i.i.d. and called the {\em passage times.} For simplicity we assume here that their distribution has a density with respect to the Lebesgue measure, but this is not strictly necessary. For $x,y \in \ZZ^\ell$, a {\em path} from $x$ to $y$ is a sequence of edges $e_1, e_2, \dots, e_m$ in $\EE^\ell$ such that consecutive edges $e_i, e_{i+1}$ share exactly one point. The {\em cost} of such a path is $T(\gamma):=\sum_{e\in \gamma}\tau_e$. The {passage time} from $x$ to $y$ is defined by \(T(x,y) = \inf_\gamma T(\gamma)\), the infimum is over all paths from $x$ to $y$. Since $\PP(\tau_e=0)=0$, the pair $(\ZZ^\ell, T)$ is a (random) metric space. For a given Borel set $D \subset \mathbb R^\ell$, we can consider FPP in $\XX_n= \frac1n(n D\cap \mathbb Z^\ell)$ and $d_n(x,y) = T(nx,ny)/n$, which is easily seen to be a metric space as well. We equip this metric space with $\mu_n$, the uniform measure on $\XX_n$. The translation of the celebrated limit shape theorem  \cite[Theorem 2.34]{auffinger201750} states that $(\XX_n,d_n)$ converges in Gromov-Hausdorff sense to the metric space $(D,d)$, where $d$ is a certain (deterministic) metric that depends on the edge weights for which it is very difficult to obtain precise information. By standard weak convergence arguments, it follows that $\mu_n$ converges weakly towards the Lebesgue measure ${\rm vol}$ on $D$. 

As a consequence, we get that the metric measure spaces $(\XX_n,d_n,\mu_n)$ converge in mGH topology towards $(D,d,{\rm vol})$. It is natural to ask in this situation if the {\em k}-barycenters do converge. 
Since we have no information about $d$, we cannot say much about the {\em k}-barycenter of the limit. However, from the mGH convergence and Theorem~\ref{thm:convergence}, we obtain that the empirical {\em k}-barycenter of the space $(\mathcal{X}_n, d_n, \mu_n)$ is a consistent estimator of it. More precisely, 
\begin{prop}
Almost surely, for all $\varepsilon>0$, there is a $N\in\mathbb{N}$, such that for all $n\geq N$, for every set of { $k$}-barycenters $\mathcal{S}_k\subset\mathcal{X}_n\subset D$ for $(\mathcal{X}_n, d_n, \mu_n)$, there exists a corresponding set of {$k$}-barycenters $\mathcal{S}\subset D$ for $(D, d, \mathrm{vol})$ that is $\varepsilon$-close in the Hausdorff distance, \textit{i.e.}, $d_H(\mathcal{S}_k,\mathcal{S})\leq \varepsilon$.
\end{prop}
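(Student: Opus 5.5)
The plan is to reduce the statement to Theorem~\ref{thm:convergence}, using the inclusion maps $h_n\colon \XX_n\hookrightarrow D$ as the candidate $\varepsilon_n$-isometries. Everything then comes down to verifying, on a single almost sure event, that $(\XX_n,d_n,\mu_n)\overset{mGH}{\to}(D,d,\mathrm{vol})$ through these maps. Throughout, $D$ is taken to be compact (for instance the closure of a bounded open set with negligible boundary), $\mathrm{vol}$ is normalized to a probability measure, and $d$ is the limit-shape norm restricted to $D$.

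\textbf{Metric part.} The shape theorem \cite[Theorem 2.34]{auffinger201750} gives, almost surely,
\[
\lim_{n\to\infty}\ \sup_{x,y\in\XX_n}\bigl|T(nx,ny)/n-d(x,y)\bigr| = 0 .
\]
This uniform statement over pairs in $nD\cap\ZZ^\ell$ is the usual corollary of the shape theorem. It uses that the distance is a norm, which gives translation invariance, together with a uniform-in-starting-point version obtained from moment assumptions on $\tau_e$; these are the hypotheses under which the paper quotes the Gromov--Hausdorff convergence. This is exactly condition i) of Proposition~\ref{prop:mGHsample}.

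Condition ii) is deterministic. Every point of $D$ lies within Euclidean distance $\sqrt{\ell}/n$ of $\frac1n\ZZ^\ell$. If $D$ is the closure of a regular open set, a nearby lattice point can in fact be chosen inside $D$. Since $d$ is a norm on $\R^\ell$, it is equivalent to the Euclidean one, so $\sup_{x\in D}d(x,\XX_n)\to0$. Together these show that the inclusions are $\varepsilon_n$-isometries with $\varepsilon_n\to0$.

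\textbf{Measure part.} The pushforward of $\mu_n$ is the uniform measure on $D\cap\frac1n\ZZ^\ell$. For $g$ continuous on $D$, the average $\frac{1}{|\XX_n|}\sum_{x\in\XX_n}g(x)$ is a Riemann sum. Because $\partial D$ has Lebesgue measure zero, $|\XX_n|n^{-\ell}\to\mathrm{vol}(D)$, so this average converges to $\int_D g\,d\mathrm{vol}/\mathrm{vol}(D)$. This part is nonrandom.

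Combining the two parts, the mGH convergence holds on a full-probability event $\Omega_0$. For each fixed $\omega\in\Omega_0$, Theorem~\ref{thm:convergence} with $h_n$ the inclusion then gives
\[
\lim_{n\to\infty}\ \max_{S_n}\ \min_{S}\ d_H(S_n,S)=0 ,
\]
which is precisely the claimed statement with quantifiers ``for all $\varepsilon$ there is $N$''. The Hausdorff distance here is taken in $(D,d)$.

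The main obstacle is the metric part. The shape theorem is usually stated along rays, or as $B(t)/t\to B_0$, while here we need convergence uniform over all pairs of starting points in $nD$. I would derive it from the ball version: cover $D$ by finitely many points, apply the shape theorem from each corresponding lattice point, and use the triangle inequality for $T$. An alternative is to simply invoke the Gromov--Hausdorff formulation already asserted above. If a convex or geodesic restriction on $D$ is needed, so that the limit of $d_n$ is the norm rather than an induced path metric, I would assume $D$ convex.
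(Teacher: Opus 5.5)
Your reduction is the paper's. The paper proves this proposition by repeating the three steps of Theorem~\ref{thm:fpp}: uniform convergence of the metrics, the inclusion $\XX_n\hookrightarrow D$ being an $\varepsilon_n$-isometry, and weak convergence of the uniform measures. It then applies Theorem~\ref{thm:convergence} with $h_n$ the inclusion. Your treatment of condition ii) and of the measure part is fine, and you are right that for fixed $D$ both are deterministic.

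\textbf{The gap is in the metric part,} which is the only genuinely probabilistic input. The almost sure statement
\[
\sup_{x,y\in nD\cap\ZZ^\ell}\frac{\bigl|T(x,y)-\|x-y\|_\tau\bigr|}{n}\longrightarrow 0
\]
does not follow by soft arguments from the shape theorem. The shape theorem is an almost sure statement about passage times from a \emph{fixed} center. In your covering argument the centers $\lfloor nz_j\rfloor$ move with $n$. Translation invariance only tells you that $T(\lfloor nz_i\rfloor,\lfloor nz_j\rfloor)$ has the law of $T(0,\lfloor nz_j\rfloor-\lfloor nz_i\rfloor)$. That yields convergence in probability of $T(\lfloor nz_i\rfloor,\lfloor nz_j\rfloor)/n$, not almost sure convergence. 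The triangle-inequality step has the same defect. It needs $\sup\{T(x,\lfloor nz_j\rfloor)\colon x\in nD,\ |x-nz_j|\le\eta n\}\le C\eta n$ for all large $n$ almost surely, which is again a uniform statement around moving centers. Calling this ``the usual corollary'' or ``a uniform-in-starting-point version obtained from moment assumptions'' asserts exactly what has to be proved.

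What is missing is a quantitative tail estimate that stays summable after a union bound over the $O(n^{2\ell})$ pairs of lattice points in $nD$, followed by Borel--Cantelli. This is Step~1 of the proof of Theorem~\ref{thm:fpp}. Under $\mathbb{E}e^{\alpha\tau_e}<\infty$, Talagrand's concentration inequality together with Alexander's bound on the non-random fluctuations gives $\mathbb{P}\bigl(|T(x,y)-\|x-y\|_\tau|>\varepsilon\|x-y\|_1\bigr)\le C_1e^{-C_2\|x-y\|_1}$. This handles the pairs with $\|x-y\|_1\ge\eta n$. For pairs with $\|x-y\|_1<\eta n$ you only need $T(x,y)\le C\eta n$, since $T\ge 0$ and $\|x-y\|_\tau\le C'\eta n$. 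That bound follows from a Chernoff estimate along a fixed lattice path of length $\|x-y\|_1$.

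Your fallback of invoking the abstract Gromov--Hausdorff convergence does not repair this. It provides \emph{some} $\varepsilon_n$-isometries. Your measure computation and the form of the conclusion, which compares $\mathcal S_k\subset D$ directly with $\mathcal S$, both require the inclusions themselves to be $\varepsilon_n$-isometries. For that, condition i) of Proposition~\ref{prop:mGHsample} is exactly what must be established. Once it is established in this way, the rest of your argument goes through as written.
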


\begin{proof}
The proof follows the same steps as in Theorem \ref{thm:fpp}.
\end{proof}

\noindent Alternatively, as it is more common in FPP, we can consider for every time $t$, the ball centered at the origin,
\[
B(t)=\{y \in \ZZ^\ell \colon T(0,y) < t\}.
\]
Again, for $d_t(\frac{x}{t},\frac{y}{t})=t^{-1}T(x,y)$, we have that $\mathcal X_t:=(\frac 1 t B(t), d_t)$ is a metric space that we also equip with the uniform measure $\mu_t$. Note that this metric measure space admits a unique barycenter that we denote $b_t$. The uniqueness comes from the observation that since the random passage times $\tau_e$ admit a density with respect to the Lebesgue measure, we have that for any distinct $x,x'\in \mathcal X_t$, it holds that almost surely
\[
\sum_{y\in B(t)} d_t^2\left(\frac x t, \frac y t \right) \neq \sum_{y\in B(t)} d_t^2\left(\frac {x'} t,\frac y t \right).
\]
The convergence $\mathcal X_t \underset{t\to\infty}{\longrightarrow} \mathcal X$ in GH sense follows from the limit shape theorem \cite[Theorem 3.3]{cox1981some}, where $\mathcal{X} = (\mathcal B_0,d)$ for some deterministic symmetric convex compact set $\mathcal B_0\in\R^\ell$, which induces a norm that depends on the distribution of $\tau_e$, and $d$ is the distance induced by this norm. In this theorem in fact Hausdorff convergence is proved for the shapes in $\R^\ell$, but in our context it is equivalent to Gromov-Hausdorff convergence \cite{Gromov}. See also \cite[Theorem 2.34]{auffinger201750}. In the following theorem we extend the GH convergence to $mGH$ convergence when both spaces are equipped with uniform measure respectively (which are different). 

\begin{thm}\label{thm:fpp}
Let $\{\tau_e\}$ be i.i.d. continuous passage times on the edges of $\mathbb{Z}^\ell$ satisfying $\mathbb{E}e^{\alpha\tau_e}<\infty$ for some $\alpha>0$. Let $\mu_t$ be the uniform probability measure on the finite set $B(t)/t$ and $\mathcal{B}_0\subset\mathbb{R}^\ell$ be the asymptotic shape given by the shape theorem, $\|\cdot\|_\tau$ the associated norm, i.e. $\|x\|_\tau=\lim_{n\to\infty}\frac{1}{n}T(0,nx)$,
and set
\[
d(s,s')=\|s-s'\|_\tau \quad(s,s'\in\mathbb{R}^\ell).
\]
Denote by $\mu$ the uniform probability measure on $\mathcal{B}_0$. Then, almost surely as $t\to\infty$, 
$$(B(t)/t,d_t,\mu_t)\xrightarrow{mGH}(\mathcal{B}_0,d,\mu).
$$
\end{thm}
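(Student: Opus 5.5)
Since the metric part of the convergence is already provided by the shape theorem (\cite[Theorem 3.3]{cox1981some}, see also \cite[Theorem 2.34]{auffinger201750}), what remains is to exhibit explicit $\varepsilon_t$-isometries $h_t$ and to show that $(h_t)_\#\mu_t$ converges weakly to $\mu$. We take $h_t\colon B(t)/t\to\mathcal B_0$ to be a nearest-point projection in $\|\cdot\|_\tau$, chosen measurably: $h_t(x)=x$ if $x\in\mathcal B_0$, and otherwise $h_t(x)$ is a point of $\mathcal B_0$ closest to $x$. The shape theorem gives that, almost surely, for every $\delta>0$ and all $t$ large,
\[
(1-\delta)\mathcal B_0\cap \tfrac1t\ZZ^\ell\subset B(t)/t\subset(1+\delta)\mathcal B_0 .
\]
Hence $|h_t(x)-x|\le C\delta$, and every point of $\mathcal B_0$ lies within $C\delta$ of $h_t(B(t)/t)$.

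For the distortion we need a uniform bound
\[
\sup_{x,y\in B(t)}\bigl|t^{-1}T(x,y)-\|x/t-y/t\|_\tau\bigr|\to0 .
\]
This does not follow from the shape theorem centered at the origin alone. We will use its uniform version on bounded regions: almost surely, $\sup_{x,y\in\ZZ^\ell,\,|x|,|y|\le Ct}|T(x,y)-\|x-y\|_\tau|/t\to0$. This holds under the exponential moment assumption $\mathbb E e^{\alpha\tau_e}<\infty$, for instance via Kesten's concentration bounds combined with a union bound over the $O(t^{2\ell})$ pairs and Borel--Cantelli along integer times; see \cite{auffinger201750}. Combining it with $|h_t(x)-x|\le C\delta$ and the triangle inequality for $\|\cdot\|_\tau$ shows that $h_t$ is an $\varepsilon_t$-isometry with $\varepsilon_t\to0$.

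Next we prove that $(h_t)_\#\mu_t\to\mu$ weakly. Fix $f$ bounded and Lipschitz on a neighborhood of $\mathcal B_0$. Since $|h_t(x)-x|\le C\delta$, it suffices to compare $\frac{1}{|B(t)|}\sum_{y\in B(t)}f(y/t)$ with $\frac{1}{\mathrm{vol}(\mathcal B_0)}\int_{\mathcal B_0}f$. The sandwich inclusion above lets us count lattice points: $|B(t)|\,t^{-\ell}$ lies between $\#\bigl((1\pm\delta)\mathcal B_0\cap t^{-1}\ZZ^\ell\bigr)\,t^{-\ell}$, and these counts converge to $(1\pm\delta)^\ell\mathrm{vol}(\mathcal B_0)$ because $\mathcal B_0$ is convex, so its boundary has measure zero and Riemann sums converge. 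The same argument applies to the sums of $f$, with an error of order $\|f\|_\infty\,\mathrm{vol}\bigl((1+\delta)\mathcal B_0\setminus(1-\delta)\mathcal B_0\bigr)=O(\delta)$. Letting $t\to\infty$ and then $\delta\to0$ gives the weak convergence. Note that $\mathcal B_0$ has nonempty interior and $\|\cdot\|_\tau$ is a norm, hence equivalent to the Euclidean norm, so all these comparisons hold up to constants.

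The main obstacle is the uniform two-point estimate on $T(x,y)$ for all pairs in $B(t)$, rather than just from the origin. The standard shape theorem controls only $T(0,\cdot)$. I would first try to derive the uniform version by applying Kesten's exponential concentration and the $O(\sqrt{n}\log n)$ bound on the nonrandom fluctuations to each pair, then taking a union bound. If that route becomes delicate, a fallback is to prove the convergence only for the pairs in a finite $\delta t$-net of $B(t)$ and to control the remaining pairs using $T(x,x')\le C|x-x'|$ for nearby lattice points. That local bound itself requires its own large-deviation estimate for short passage times.
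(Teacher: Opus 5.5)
Your proposal is correct and follows essentially the same three-step route as the paper: uniform two-point concentration with a union bound over $O(t^{2\ell})$ pairs and Borel--Cantelli, then the shape-theorem sandwich to get the $\varepsilon_t$-isometry, then lattice-point counting against $(1\pm\delta)t\mathcal B_0$ for the weak convergence of the uniform measures. Your nearest-point projection onto $\mathcal B_0$ and your concern about short pairs actually refine the paper's argument: the paper's identity map $x/t\mapsto x/t$ does not take values in $\mathcal B_0$, and its ``in particular'' step glosses over pairs with $\|y-x\|_1\ll t$, where $C_1e^{-C_2\|y-x\|_1}$ is not small and one needs a direct Chernoff bound such as $\mathbb P(T(x,y)>\varepsilon t)\le e^{-\alpha\varepsilon t}(\mathbb E e^{\alpha\tau_e})^{\|y-x\|_1}$.
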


\begin{corol}
Under the hypothesis of Theorem \ref{thm:fpp}, the Fr\'echet mean of $(B(t)/t,d_t,\mu_t)$, $b_t$, converge to the set of Fr\'echet means of $\mathcal X=(\mathcal{B}_0,d,\mu)$ in the sense that, as $t\to \infty$,
\[
\mathrm{dist}(b_t,\mathcal S_{\mathcal X}(1)) \to 0, \qquad \text{almost surely.} 
\]
\end{corol}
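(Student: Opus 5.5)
The corollary should follow directly from Theorem~\ref{thm:convergence} with $k=1$, applied along an arbitrary sequence of times $t_j\to\infty$.

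\textbf{Step 1: setting up the theorem.} By Theorem~\ref{thm:fpp}, almost surely $(B(t)/t,d_t,\mu_t)\xrightarrow{mGH}(\mathcal B_0,d,\mu)$ as $t\to\infty$. Fix an $\omega$ in this full-measure event and an arbitrary sequence $t_j\to\infty$. Along this sequence, Theorem~\ref{thm:convergence} applies with $k=1$ and $p=2$ (or any $p\ge1$). It gives, for $\varepsilon_{t_j}$-isometries $h_{t_j}\colon B(t_j)/t_j\to\mathcal B_0$,
\[
\max_{S\in\mathcal S_{\mathcal X_{t_j}}(1)}\ \min_{S'\in\mathcal S_{\mathcal X}(1)} d_H\bigl(h_{t_j}(S),S'\bigr)\to 0 .
\]
For singletons the Hausdorff distance is just $d$ between the points. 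Since $\mathcal S_{\mathcal X_t}(1)=\{b_t\}$, this yields $\mathrm{dist}(h_t(b_t),\mathcal S_{\mathcal X}(1))\to0$ along every sequence, hence as $t\to\infty$.

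\textbf{Step 2: removing $h_t$.} This is the main subtlety. The statement is about $b_t$ itself, viewed as a point of $\R^\ell$ since $B(t)/t\subset\R^\ell$, with $\mathrm{dist}$ measured in $d=\|\cdot\|_\tau$. So I must check that the isometries can be taken close to the identity, or chosen in a way that lets $h_t(b_t)$ be replaced by $b_t$.

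The shape theorem actually gives more than abstract GH convergence. It gives $\sup_{x,y\in B(t)}|T(x,y)-\|x-y\|_\tau|/t\to0$ almost surely, together with Hausdorff convergence of $B(t)/t$ to $\mathcal B_0$ in $\R^\ell$. Using these, I would take $h_t$ to be a nearest-point projection of $B(t)/t$ onto $\mathcal B_0$, so that $\|h_t(x)-x\|_\tau\le d_H^{\R^\ell}(B(t)/t,\mathcal B_0)\to0$. This is presumably the map built in the proof of Theorem~\ref{thm:fpp}, and Theorem~\ref{thm:convergence} holds for \emph{any} such sequence of isometries. Then
\[
\mathrm{dist}(b_t,\mathcal S_{\mathcal X}(1))\le \|b_t-h_t(b_t)\|_\tau+\mathrm{dist}(h_t(b_t),\mathcal S_{\mathcal X}(1))\to0 .
\]

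\textbf{Step 3: compactness and uniqueness.} $\mathcal S_{\mathcal X}(1)$ is nonempty and compact, because $\mathcal B_0$ is compact and $\Phi$ is continuous, so $\mathrm{dist}$ is well defined. Uniqueness of $b_t$, already argued before the theorem, is only needed to speak of ``the'' mean. Without it, the same argument still gives convergence of every Fréchet mean, uniformly over the set of means.
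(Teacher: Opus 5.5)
Your argument is correct and is essentially the paper's. The corollary is just Theorem~\ref{thm:fpp} fed into Theorem~\ref{thm:convergence} with $k=1$, where $d_H$ between singletons reduces to the point distance.

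The only divergence is the choice of approximating maps. The paper takes $h_t$ to be the inclusion $h(x/t)=x/t$ from Step~2 of the proof of Theorem~\ref{thm:fpp}. Then $h_t(b_t)=b_t$, and your Step~2 has no counterpart there. So your guess that the nearest-point projection ``is presumably the map built in the proof'' is not right.

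Your choice is actually the more careful one. Since $T(0,y)$ can fall below $\|y\|_\tau$, $B(t)/t$ need not lie inside $\mathcal B_0$. The inclusion is therefore not literally a map into the limit space, which the definition of mGH convergence (and hence Theorem~\ref{thm:convergence}) requires.

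However, because the projection is your own map, you cannot invoke Theorem~\ref{thm:convergence} for it until you check that it realizes the mGH convergence. You need three facts established inside the proof of Theorem~\ref{thm:fpp}:
\begin{itemize}
\item the uniform convergence $\sup_{x,y}|d_t(x,y)-d(x,y)|\to0$ from its Step~1 (this needs the concentration bounds, not the shape theorem alone);
\item the Hausdorff convergence $d_H(B(t)/t,\mathcal B_0)\to0$, taken with respect to $d$;
\item $\mu_t\to\mu$ from its Step~3.
\end{itemize}

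With these, both checks are short. Put $\eta_t:=\sup_{x\in B(t)/t}\mathrm{dist}_\tau(x,\mathcal B_0)\le d_H(B(t)/t,\mathcal B_0)\to0$.

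First, $|d_t(x,y)-d(h_tx,h_ty)|\le|d_t(x,y)-d(x,y)|+2\eta_t$. Also, every $s\in\mathcal B_0$ is within $d_H(B(t)/t,\mathcal B_0)+\eta_t$ of $h_t(B(t)/t)$. So $h_t$ is an $\varepsilon_t$-isometry into $\mathcal B_0$ with $\varepsilon_t\to0$.

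Second, take $f$ Lipschitz on $\mathcal B_0$ and extend it to a bounded Lipschitz $\tilde f$ on $\R^\ell$ with the same constant. Then $\bigl|\int f\,d(h_t)_\#\mu_t-\int\tilde f\,d\mu_t\bigr|\le\mathrm{Lip}(f)\,\eta_t$, so $(h_t)_\#\mu_t\to\mu$.

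With these lines added your proof is complete. Your reduction to arbitrary sequences $t_j\to\infty$ also correctly handles the continuous parameter, which the paper leaves implicit since Theorem~\ref{thm:convergence} is stated for sequences.
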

Here \rm{dist} is the distance from a point to a set induced by $d$ as provided by Theorem \ref{thm:convergence}, but recall that $d$ is induced by a norm in $\R^\ell$ and hence this convergence to zero is equivalent to convergence to zero with the Euclidean distance from a point to a set.

\begin{rmq}\label{rem:symmetry}
Since the norm $\|\cdot\|_\tau$ is symmetric $\mathcal{B}_0$ is centrally symmetric
and therefore $0$ belongs to the set of Fr\'echet means
of $(\mathcal{B}_0,d,\mu)$.  Whether this set reduces to the single point $\{0\}$ is equivalent
to the strict convexity of $\mathcal{B}_0$, a property that, although conjectured to be true for continuous passage times, remains open for general
continuous passage time distributions in dimensions $\ell\ge3$ \cite{auffinger201750}.
\end{rmq}

\begin{proof}
The proof is divided in three steps. First we prove the uniform convergence of the metrics, next we show that the immersion $h\colon t^{-1}B(t) \to \R$,  $h(x/t)=x/t$ is an $\varepsilon_t$-isometry, with $\varepsilon_t\to0$ as $t\to \infty$. Finally, we prove the weak convergence of measures $\mu_t\to\mu$. All the statements hold almost surely.

{\em Step 1. Uniform convergence of the metrics.} 
Let $\varepsilon>0$. By the shape theorem, for large $t$ we have $B(t)\subset(1+\varepsilon)t\mathcal{B}_0$,
so there exists $M>0$ such that $\|y-x\|_1\le Mt$ for all $x,y\in B(t)$.

Talagrand's concentration bounds (\cite[Theorem~3.13]{auffinger201750}) combined with Alexander's methods to bound the non-random fluctuations (\cite[Theorem 3.33]{auffinger201750}) provides $C_1,C_2>0$ such that for any $x,y\in\mathbb{Z}^d$ with $\|y-x\|_1$ large enough,
\[
\mathbb{P} \left(\bigl|T(x,y)-d(x,y)\bigr|>{\varepsilon}\|y-x\|_1\right)
\le C_1\exp \bigl(-C_2\|y-x\|_1\bigr).
\]
In particular, for $x , y \in B(t)$,
\[
\mathbb{P} \left(\bigl|T(x, y)-d(x , y )\bigr|>{\varepsilon Mt}\right)
\le C_1\exp \bigl(-C_2Mt\bigr).
\]

Since $|B(t)|=O(t^d)$, the number of pairs $(x,y)$ with $x,y\in B(t)$ is $O(t^{2d})$.
A union bound gives
\[
\mathbb{P}\left(\sup_{x,y\in B(t)}\bigl|T(x,y)-d(x,y)\bigr|>{\varepsilon M}t\right)
\le C't^{2d}\exp\bigl(-C_2Mt\bigr).
\]

Dividing by $t$ and noting that $d(t^{-1}x,t^{-1}y)=t^{-1}d(x,y)$, we obtain
\[
\mathbb{P} \left(\sup_{x/t,\, y/ t\in B(t)/t}\bigl|d_t(x/t,y/t)-d(x/t,y/t)\bigr|>{\varepsilon M}\right)
\le C't^{2d}\exp \bigl(-C_2Mt\bigr).
\]

The right‑hand side is summable over $t$, so by Borel--Cantelli the supremum converges to $0$ almost surely.

{\em Step 2. } The shape theorem gives convergence in Hausdorff distance $d_H(t^{-1}B(t),\mathcal B_0)\to 0$ a.s. as $t\to \infty$. Let $C$ be such that $d(s,s')\le C\|s-s'\|_2$ for every $s,s'\in \R^\ell$. Taking $\varepsilon_t= C^{-1}d_H(t^{-1}B(t),\mathcal B_0)\to 0$, we obtain for every $s\in\mathcal B_0$
\[
\mathrm{dist}(s,h(t^{-1}B(t))) = \min_{x\in B(t)} d(x/t,s) \le  C \min_{x\in B(t)} \| x/t -s \|_2 \le C d_H(t^{-1}B(t),\mathcal B_0) = \varepsilon_t,
\]
proving that $h$ is an $\varepsilon_t$-isometry.

{\em Step 3. Weak convergence of uniform measures.}
We have $h_\#\mu_t=\mu_t$ (viewed as a discrete measure on $\R^\ell$).
To see that $\mu_t\to\mu=h_\#\mu$ weakly on $\mathcal{B}_0$, let $A\subset\mathcal{B}_0$ be closed.
For any $\delta>0$ set $A_\delta=\{s\in\mathcal{B}_0:\operatorname{dist}(s,A)\le\delta\}$;
then $A\subset A_\delta$ and $\operatorname{Vol}(A_\delta)\to\operatorname{Vol}(A)$ as $\delta\to0$.
Using the inclusions $(1-\delta)t\mathcal{B}_0\subset B(t)\subset(1+\delta)t\mathcal{B}_0$
provided by the shape theorem,
\[
\mu_t(A)=\frac{|B(t)\cap tA|}{|B(t)|}
\le\frac{|tA_\delta\cap\mathbb{Z}^d|}{|(1-\delta)t\mathcal{B}_0\cap\mathbb{Z}^d|}
\stackrel{t\to\infty}{\longrightarrow}\frac{\operatorname{Vol}(A_\delta)}{(1-\delta)^d\operatorname{Vol}(\mathcal{B}_0)}.
\]
Letting $\delta\to0$ gives $\limsup_{t\to\infty}\mu_t(A)\le\mu(A)$.  By the Portmanteau theorem,
$\mu_t\to\mu$ weakly. This finishes the proof.

\end{proof}

\subsection{Discrete approximations of length spaces}

It is a well-known fact that every compact length space can be obtained as a Gromov–Hausdorff limit of finite graphs, see for instance \cite[Proposition~7.5.5]{Burago}. Therefore, it is natural to look for such approximations that preserve the $k$-barycenter, \textit{i.e.}, such that the $k$-barycenter with respect to the (weighted) empirical measure on the approximating graph converges to the $k$-barycenter of the approximated length space equipped with some measure. 
Note that this is not trivial, since different approximating sequences $(\mathbb{X}_n, d_n)$ converging in the Gromov–Hausdorff sense to the same metric space $(\mathbb{X}, d)$ may, when equipped with the uniform measure $\mu_n$ on $\mathbb{X}_n$, converge to different metric measure spaces. 
The goal of this section is to highlight that our Theorem~\ref{thm:convergence} provides a positive answer to this question, as it shows that $k$-barycenters are indeed preserved whenever the approximating graph is not only an approximation in the GH topology but also in the mGH topology.

\begin{prop}
All compact length metric measure spaces $(\mathbb{X}, d, \mu)$ can be approximated in the measured Gromov–Hausdorff topology by finite graphs $(G_n, d_n, \mu_n)$ equipped with a distance $d_n$ and a measure $\mu_n$. In particular, for every $\varepsilon > 0$, if $n$ is large enough, any set of $k$-barycenters $S_n \in \mathcal{S}_{\mathcal X_n}(k)$ of  $\mathcal X_n=(G_n, d_n, \mu_n)$ is $\varepsilon$-close, in the Hausdorff distance, to some set $S \in \mathcal{S}_{\mathcal X}(k)$ of $k$-barycenters of $\mathcal X=(\mathbb{X}, d, \mu)$.
\end{prop}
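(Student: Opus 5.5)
The plan is to build the approximating graphs explicitly, check that they converge in the mGH sense, and then apply Theorem~\ref{thm:convergence}.

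\emph{Construction.} Fix $n$ and let $\delta_n=1/n$. By compactness, choose a finite $\delta_n$-net $G_n=\{x_1,\dots,x_{m_n}\}\subset\XX$. Join $x_i$ and $x_j$ by an edge of length $d(x_i,x_j)$ whenever $d(x_i,x_j)\le r_n$, with $r_n\to0$ and $r_n\gg\delta_n$ (for instance $r_n=\sqrt{\delta_n}$). Let $d_n$ be the resulting shortest-path distance. This is exactly the construction in \cite[Proposition~7.5.5]{Burago}, and it gives that the inclusion $h_n\colon G_n\to\XX$ is an $\varepsilon_n$-isometry with $\varepsilon_n\to0$.

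\emph{Key step: $d_n$ is uniformly close to $d$ on $G_n$.} Since each edge is weighted by the true distance, the triangle inequality gives $d\le d_n$. For the reverse bound, use that $\XX$ is a length space: take an almost-minimizing curve from $x_i$ to $x_j$ and cut it into pieces of length at most $r_n-2\delta_n$. Replace each cut point by a net point within $\delta_n$; consecutive net points are then adjacent in the graph. Each replacement adds at most $2\delta_n$, and there are about $d(x_i,x_j)/r_n$ pieces. Hence
\[
d_n(x_i,x_j)\le d(x_i,x_j)\Bigl(1+\tfrac{C\delta_n}{r_n}\Bigr)+C\delta_n,
\]
and the error is bounded by a constant times $\mathrm{diam}(\XX)\,\delta_n/r_n+\delta_n$, which tends to $0$. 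Finally, $d(x,G_n)\le\delta_n$ for every $x\in\XX$, so both conditions defining an $\varepsilon_n$-isometry hold.

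\emph{The measures.} Partition $\XX$ into Borel Voronoi-type cells $A_i\subset B(x_i,\delta_n)$, assigning each point to its first nearest net point so that the cells are disjoint. Set $\mu_n=\sum_i\mu(A_i)\delta_{x_i}$. For any Lipschitz test function $\phi$,
\[
\Bigl|\int\phi\,d(h_n)_\#\mu_n-\int\phi\,d\mu\Bigr|\le \mathrm{Lip}(\phi)\,\delta_n .
\]
Since bounded Lipschitz functions determine weak convergence on a compact space, $(h_n)_\#\mu_n\to\mu$ weakly. Therefore $(G_n,d_n,\mu_n)\overset{mGH}{\to}(\XX,d,\mu)$.

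\emph{Conclusion.} Theorem~\ref{thm:convergence} now applies. Because $h_n$ is the inclusion, $h_n(S_n)=S_n$, and the Hausdorff distance can be taken in $(\XX,d)$. This gives that for every $\varepsilon>0$ and all large $n$, each $S_n\in\mathcal S_{\mathcal X_n}(k)$ is within Hausdorff distance $\varepsilon$ of some $S\in\mathcal S_{\mathcal X}(k)$.

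The main obstacle is the uniform upper bound on $d_n$. It needs the length-space property together with the balance $\delta_n\ll r_n\to0$ between net size and edge threshold. Without the length property, graph distances need not approximate $d$, so this is where the hypothesis is used.
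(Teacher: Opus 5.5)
Your proposal is correct and follows essentially the paper's route: a $\delta_n$-net, the neighborhood graph with true edge lengths, the uniform bound $\sup|d_n-d|\to 0$ coming from the length structure (the paper cites \cite[Proposition~7.5.5]{Burago} under $\delta_n<\varepsilon_n^2/(4\,\mathrm{diam}(\XX))$, the same $\sqrt{\delta_n}$ scaling that you reprove by hand with $r_n=\sqrt{\delta_n}$), weak convergence of the measures, and then Theorem~\ref{thm:convergence} with $h_n$ the inclusion. The one real difference is your choice of $\mu_n$ as the nearest-point (Voronoi) pushforward of $\mu$ onto the net, which works for an arbitrary Borel probability $\mu$; the paper's concrete choices (uniform measure on an i.i.d.\ sample, or quantization measures on their own support) need $\mu$ to have full support so that the vertex set is actually a net of $\XX$.
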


\begin{proof}
See the discussion below.
\end{proof}

We have seen in Subsection~\ref{ssec:iid} that for any fully supported measure $\mu$ on $\mathbb{X}$, it is possible to obtain a metric measure space $(\mathbb{X}_n, d_n, \mu_n)$ that converges in the mGH sense towards $(\mathbb{X}, d, \mu)$ by choosing $\mathbb{X}_n$ to be an i.i.d.\ sample drawn from $\mu$, and $d_n$ the distance induced by $d$. 
However, when some information about the space $(\mathbb{X}, d)$ is available, it is preferable to choose specific (non-random) discretizations of the space, for instance, to approximate a torus with $n$ points. A standard procedure to discretize a measure space in an optimal way is through the \emph{quantization} of the measure~\cite{Graf}. An $n$th-order quantization $q_n$ of a probability measure $\mu$ is a probability measure supported on at most $n$ points that solves
\begin{equation}\label{eq:quantization}
    q_n \in \underset{q \in \mathcal{P}_n}{\arg\min} \, d_W(\mu, q),
\end{equation}
where $d_W$ denotes, the $p-$th order Wasserstein distance and $\mathcal{P}_n$ is the set of probability measures on $\mathbb{X}$ supported on at most $n$ points.
Note that this problem is equivalent to the $k-$means problem with $k=n$. However, in the $k-$means problem, $k$ is fixed and corresponds to the number of clusters, whereas in the quantization problem, $k$ can vary, and in particular, the larger it is, the finer the quantization will be relative to the original measurement.
It follows that if $\mu$ is fully supported, any sequence $(q_n)_{n \ge 1}$ of minimizers in~\eqref{eq:quantization} satisfies $\sup_{x \in \mathbb{X}} d(x, \mathrm{supp}\, q_n) \to 0$. This is because the minimizers do better than $n$ i.i.d. points, for which the limit holds.
As a consequence, such quantized measures can be used to construct approximations of a metric measure space in the mGH topology. More generally, let $\varepsilon_n \to 0$, and let $\mathbb{X}_n$ be a $\delta_n$-net of $(\mathbb{X}, d)$, for some $\delta_n$ to be fixed later. We construct the $\varepsilon_n$-neighborhood graph $G_n$, \textit{i.e.}, the graph with vertex set $\mathbb{X}_n$, where two points $x, y \in \mathbb{X}_n$ are connected if and only if $d(x, y) < \varepsilon_n$. 
The length of each edge $\{x, y\}$ is set to be $d(x, y)$, and we denote by $d_n$ the induced graph distance in $G_n$. If $\delta_n < \frac{1}{4} \varepsilon_n^2 / \mathrm{diam}(\mathbb{X})$, then by~\cite[Proposition~7.5.5]{Burago}, it holds that 
\[
\sup_{x, y \in \mathbb{X}} |d_n(x, y) - d(x, y)| \le \varepsilon_n.
\]
Proceeding as in Subsection~\ref{ssec:iid}, we obtain that $(\mathbb{X}_n, d_n, \mu_n) \to (\mathbb{X}, d, \mu)$ in the mGH sense if and only if $\mu_n \to \mu$ weakly. 
As mentioned above, this can be achieved in several different ways, for instance, by taking $\mu_n$ to be the uniform measure on $\mathbb{X}_n$ when $\mathbb{X}_n$ is chosen as an i.i.d.\ sample from $\mu$. One can also take $\mu_n$ to be the quantized measure $q_n$, and $\mathbb{X}_n$ to be the $n$-centers of this quantization $q_n$ of $\mu$. Note that in this latter case, where $\mathbb{X}_n$ is chosen as the $n$-centers of a quantization of $\mu$, one can instead take $\mu_n$ to be the uniform measure on these $n$-centers rather than the quantized measure $q_n$. 
In this situation, the sequence $(\mu_n)_{n \ge 1}$ does admit a weak limit, however, this limit is not equal to $\mu$. We refer the reader to~\cite[Sections~6 and~7]{Graf} for the Euclidean case, and to~\cite[Theorem~1.2]{Kloeckner} for absolutely continuous measures with respect to the volume measure ${\rm vol}_g$ of a Riemannian manifold $(\M, g)$.
To summarize, in both cases, if $\mu$ admits a density $\rho$ with respect to the Lebesgue measure (resp.\ the volume measure), then the sequence $\bar{\mu}_n$ of probability measures that are uniform on the support of $q_n$ converges weakly to the normalized multiple of $\rho^{-\ell/(p+\ell)}$, where $\ell$ denotes the dimension of the ambient space and $p$ is the Wasserstein parameter fixed for the quantization procedure~\eqref{eq:quantization}. Therefore, if we want to obtain the barycenter of $\mu$, we need to consider quantizations of the measure with density proportional to $\rho^{-(p+\ell)/\ell}$ in order to compensate.
In each of these cases, Theorem~\ref{thm:convergence} applies, and we can summarize it in the following proposition.
\begin{prop}
Let $k\in\mathbb{N}^*$, $p \ge 1$, and let $(\mathbb{X}, d, \mu)$ be a compact length metric measure space. Then the $k$-means estimator is consistent when computed with respect to the following discretizations:
\begin{itemize}
    \item $(\mathbb{X}_n, d_n, \mu_n)$, where $\mathbb{X}_n$ is an i.i.d.\ sample drawn from $\mu$, $d_n$ is the distance on $\mathbb{X}_n$ induced by $d$, and $\mu_n$ is the uniform measure on $\mathbb{X}_n$;
    \item $(\mathbb{X}_n, d_n, q_n)$, where $q_n$ is an $n$th-order quantization of $\mu$ in the sense of \eqref{eq:quantization}, $\mathbb{X}_n$ is the set of $n$-centers of the quantization (i.e., the support of $q_n$), and $d_n$ is the distance on $\mathbb{X}_n$ induced by $d$.
\end{itemize}
Moreover, if $\mathbb{X}$ is a compact Riemannian manifold of dimension $\ell$, and $\mu$ admits a density $\rho$ with respect to the volume measure, then the $k$-means estimator is also consistent when computed with respect to the following discretization:
\begin{itemize}
    \item $(\mathbb{X}_n, d_n, \bar{\mu}_n)$, where $\mathbb{X}_n$ is the set of $n$-centers of a quantization $q_n$ of the probability measure $\nu \propto \rho^{-(p+\ell)/\ell}\, d\mathrm{vol}$, $d_n$ is the distance on $\mathbb{X}_n$ induced by $d$, and $\bar{\mu}_n$ is the uniform measure on $\mathbb{X}_n$.
\end{itemize}
\end{prop}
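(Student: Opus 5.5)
The plan is to reduce each of the three items to Theorem~\ref{thm:convergence}. In every case the approximating space is a finite subset $\mathbb X_n\subset\mathbb X$ with the restricted distance $d_n=d|_{\mathbb X_n}$. I take $h_n$ to be the inclusion $h_n(x)=x$, exactly as in Subsection~\ref{ssec:iid}. Since $d_n$ is the restriction of $d$, the first condition of Proposition~\ref{prop:mGHsample} holds with error $0$. So for each item it suffices to check two things:
\begin{enumerate}[(a)]
\item $\sup_{x\in\mathbb X} d(x,\mathbb X_n)\to 0$, which makes the inclusions $\varepsilon_n$-isometries;
\item the measure on $\mathbb X_n$ converges weakly to $\mu$.
\end{enumerate}
Theorem~\ref{thm:convergence} then gives consistency in the sense of \eqref{eq:convergence}. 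The length-space assumption is not needed beyond compactness here. It only matters if one replaces the restricted distance by the $\varepsilon_n$-graph distance of \cite[Proposition~7.5.5]{Burago}, whose uniform error $\le\varepsilon_n$ would be absorbed in the same way.

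For the first item, condition (b) is the almost sure weak convergence of empirical measures on a compact (hence separable) space. Condition (a) follows on the support of $\mu$, since an i.i.d.\ sample is almost surely dense in $\mathrm{supp}\,\mu$ by a countable-cover argument. If $\mu$ is not fully supported, I would work with the mm-space $(\mathrm{supp}\,\mu,d,\mu)$, which has the same $k$-means problem up to the choice of ambient space. I would state this reduction explicitly.

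For the second item, I would use the fact recalled in the text that optimal quantizers satisfy $\sup_x d(x,\mathrm{supp}\,q_n)\to0$ when $\mu$ is fully supported, which gives (a). For (b), the optimality of $q_n$ gives $d_W(\mu,q_n)\le d_W(\mu,\hat\mu_n)$ for empirical measures $\hat\mu_n$ of $n$ i.i.d.\ points. The right-hand side tends to $0$ almost surely, so $d_W(q_n,\mu)\to0$, which on a compact space is weak convergence.

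For the third item, $\mathbb X_n=\mathrm{supp}\,q_n$ with $q_n$ a quantization of $\nu\propto\rho^{-(p+\ell)/\ell}\,d\mathrm{vol}$. Condition (a) holds as in the second item, provided $\nu$ is fully supported on the compact manifold, i.e.\ $\rho$ is positive and bounded; I would state this as a standing assumption needed for $\nu$ to be a finite measure. The key step is (b). Here I invoke the asymptotic point-density result quoted just before the statement (\cite{Graf}, \cite[Theorem~1.2]{Kloeckner}), in the form the paper recalls it: if the quantized measure has density $\tilde\rho$, then the uniform measures $\bar\mu_n$ on the $n$ centers converge weakly to the normalization of $\tilde\rho^{-\ell/(p+\ell)}$. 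Applying this with $\tilde\rho\propto\rho^{-(p+\ell)/\ell}$ gives
\[
\tilde\rho^{-\ell/(p+\ell)}\propto\big(\rho^{-(p+\ell)/\ell}\big)^{-\ell/(p+\ell)}=\rho ,
\]
so $\bar\mu_n\to\mu$ weakly, and Theorem~\ref{thm:convergence} applies. This third item is the main obstacle. Its content is entirely the imported point-density theorem, and I must check that its hypotheses (compactness, a density bounded away from $0$ and $\infty$, Riemannian setting) hold for $\nu$. The exponent bookkeeping must also follow exactly the form of the limit as the paper states it, since the conclusion $\bar\mu_n\to\mu$ depends on it.
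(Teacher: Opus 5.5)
\textbf{The third item does not go through.} Your route is the paper's own: inclusions as $\varepsilon_n$-isometries (with zero distortion, since $d_n=d|_{\XX_n}$), weak convergence of the measures, then Theorem~\ref{thm:convergence}. The gap is in step (b) of the third item, and it comes from the text you quote. There, the point-density theorem of \cite{Graf} and \cite{Kloeckner} is recalled with the wrong sign.

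For optimal order-$p$ centers of a measure with density $\tilde\rho$ on an $\ell$-dimensional space, the uniform measures on the centers converge to the normalization of $\tilde\rho^{\,\ell/(\ell+p)}$. The exponent is \emph{positive}: optimal centers crowd where the mass is. For $\ell=1$, $p=2$ this is the classical one-third power law. Heuristically, minimizing $\int\tilde\rho\,(n\lambda)^{-p/\ell}\,d\mathrm{vol}$ over point densities with $\int\lambda\,d\mathrm{vol}=1$ gives $\lambda\propto\tilde\rho^{\,\ell/(\ell+p)}$.

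With $\tilde\rho\propto\rho^{-(p+\ell)/\ell}$ as in the statement, $\bar\mu_n$ therefore converges to the normalization of $\rho^{-1}\,d\mathrm{vol}$. This is not $\mu$ unless $\rho$ is constant, so Theorem~\ref{thm:convergence} gives consistency towards the $k$-means of the wrong measure. The third item as written inherits this sign error. You rightly identified the exponent bookkeeping as the crux, but it had to be checked against the source rather than taken from the paper's paraphrase.

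The repair is to quantize $\nu\propto\rho^{(p+\ell)/\ell}\,d\mathrm{vol}$. Then $\tilde\rho^{\,\ell/(\ell+p)}\propto\rho$, and the rest of your argument goes through. This also removes your extra hypothesis on $\rho$. Indeed, $\nu$ is finite as soon as $\rho\in L^{(p+\ell)/\ell}$ (e.g.\ $\rho$ bounded). Moreover $\nu$ and $\mu$ are mutually absolutely continuous, so $\mathrm{supp}\,\nu=\mathrm{supp}\,\mu$ and no lower bound on $\rho$ is needed.

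\textbf{Full support is needed, not removable.} Your proposed reduction to $(\mathrm{supp}\,\mu,d,\mu)$ when $\mu$ is not fully supported does not prove the stated claim. The $k$-means of that space minimize over $C_k(\mathrm{supp}\,\mu)$, not over $C_k(\XX)$. Take $\XX=[0,1]$, $\mu=\frac12(\delta_0+\delta_1)$, $k=1$, $p=2$: the population $1$-mean is $\{1/2\}$, while every sample $1$-mean lies in $\{0,1\}$. So full support of $\mu$ is a genuine hypothesis for the first two items (the paper assumes it implicitly), and you should add it rather than reduce it away.

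Under full support, you also need not rely on the text's loose justification that $\sup_x d(x,\mathrm{supp}\,q_n)\to0$. It follows from your step (b). If $d(x,\mathrm{supp}\,q_n)\ge\eta$, then $d_W^p(\mu,q_n)\ge(\eta/2)^p\min_{y\in\XX}\mu(B(y,\eta/2))$. This minimum is positive, since $y\mapsto\mu(B(y,\eta/2))$ (open balls) is lower semicontinuous and positive on the compact space $\XX$.
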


Recall that by consistent, we always mean that for every $\varepsilon > 0$, if $n$ is large enough, any set of $k$-barycenters $S_n$ for $(\XX_n, d_n, \mu_n)$ is $\varepsilon$-close, in the Hausdorff distance, to some set ${S}$ of $k$-barycenters for $(\mathbb{X}, d, \mu)$.

\bibliographystyle{abbrv}
\bibliography{biblio}

\end{document}